\newcommand{\urltilde}{\kern -.15em\lower .7ex\hbox{~}\kern .04em}
\renewcommand{\abovecaptionskip}{0pt}
\renewcommand{\belowcaptionskip}{6pt}
\renewcommand{\@makecaption}[2]{
\vspace{\abovecaptionskip}%
\sbox{\@tempboxa}{#1. #2}%
\global\@minipagefalse \hbox to \hsize {{\scshape \hfil #1.
#2\hfil}} \vspace{\belowcaptionskip}}
\newcommand{\Pic}{\operatorname{Pic}}
\newcommand{\Hom}{\operatorname{Hom}}
\newcommand{\rk}{\operatorname{rk}}
\newcommand{\GL}{\operatorname{GL}}
\newcommand{\SL}{\operatorname{SL}}
\newcommand{\Sp}{\operatorname{Sp}}
\newcommand{\Spin}{\operatorname{Spin}}
\newcommand{\SO}{\operatorname{SO}}
\newcommand{\ZZ}{\mathbb Z}
\newcommand{\FF}{\mathbb F}
\newcommand{\PP}{\mathbb P}
\newtheorem{theorem}{Theorem}
\newtheorem{proposition}[theorem]{Proposition}
\newtheorem{corollary}[theorem]{Corollary}
\newtheorem*{question*}{Question}
\theoremstyle{definition}
\newtheorem{definition}[theorem]{Definition}
\theoremstyle{remark}
\newtheorem{remark}[theorem]{Remark}
\numberwithin{equation}{section}
\numberwithin{equation}{section}
\newcounter{num}[table]
\newcounter{subnum}[num]
\newcommand{\newcase}{\refstepcounter{num}\arabic{num}}
\newcommand{\no}{\refstepcounter{subnum}\arabic{num}.\arabic{subnum}}
\begin{document}

\renewcommand{\proofname}{Proof}
\renewcommand{\abstractname}{Abstract}
\renewcommand{\refname}{References}
\renewcommand{\figurename}{Figure}
\renewcommand{\tablename}{Table}

\title[Branching rules related to spherical actions]
{Branching rules related to spherical actions\\on flag varieties}

\author{Roman Avdeev and Alexey Petukhov}


\address{%
{\bf Roman Avdeev} \newline\indent National Research University ``Higher School of Economics'', Moscow, Russia}

\email{suselr@yandex.ru}

\address{%
{\bf Alexey Petukhov}
\newline\indent Institute for Information Transmission Problems, Moscow, Russia}

\email{alex{-}{-}2@yandex.ru}


\subjclass[2010]{20G05, 22E46, 14M15, 14M27}

\keywords{Algebraic group, representation, flag variety, spherical variety, branching rule}

\begin{abstract}
Let $G$ be a connected semisimple algebraic group and let $H \subset G$ be a connected reductive subgroup. Given a flag variety $X$ of~$G$, a result of Vinberg and Kimelfeld asserts that $H$ acts spherically on $X$ if and only if for every irreducible representation~$R$ of $G$ realized in the space of sections of a homogeneous line bundle on~$X$ the restriction of $R$ to $H$ is multiplicity free. In this case, the information on restrictions to $H$ of all such irreducible representations of~$G$ is encoded in a monoid, which we call the restricted branching monoid. In this paper, we review the cases of spherical actions on flag varieties of simple groups for which the restricted branching monoids are known (this includes the case where $H$ is a Levi subgroup of~$G$) and compute the restricted branching monoids for all spherical actions on flag varieties that correspond to triples $(G,H,X)$ satisfying one of the following two conditions: (1) $G$ is simple and $H$ is a symmetric subgroup of~$G$; (2) $G = \SL_n$.
\end{abstract}

\maketitle

\section{Introduction}

One of the most basic problems of representation theory of algebraic groups is that of describing the restriction of any finite-dimensional representation of a given group~$G$ to a subgroup~$H$; such a description is referred to as a \textit{branching rule} for the pair $(G,H)$. When the ground field $\FF$ is algebraically closed and of characteristic zero (which is assumed in what follows) and both groups in question are reductive, the corresponding branching rule for the pair $(G,H)$ is completely described by the collection of nonnegative integers $\dim \Hom_H(V,W)$ for all possible irreducible representations $W$ of~$G$ and $V$ of~$H$. The number $\dim \Hom_H(V,W)$ is called the \textit{multiplicity} of $V$ in~$W$.

Given a connected reductive algebraic group~$K$, fix a Borel subgroup~$B_K$, let $\Lambda^+(K)$ be the set of dominant weights of~$B_K$, and for every $\lambda \in \Lambda^+(K)$ let $R_K(\lambda)$ denote the irreducible representation of~$K$ with highest weight~$\lambda$. Now consider two connected reductive algebraic groups $G \supset H$ and for every pair $(\lambda; \mu) \in \Lambda^+(G) \times \Lambda^+(H)$ let
\[
m_\lambda(\mu) = \dim \Hom_H(R_H(\mu), \left.R_G(\lambda)\right|_H)
\]
be the corresponding multiplicity. Consider the set $\Gamma(G,H) \subset \Lambda^+(G) \times \Lambda^+(H)$ consisting of all pairs $(\lambda; \mu)$ with the property $m_\lambda(\mu) > 0$. It is well known that $\Gamma(G,H)$ is a finitely generated monoid; following Yacobi~\cite{Yac} we call it the \textit{branching monoid} (or \textit{branching semigroup}) for the pair $(G,H)$.

Of special importance in representation theory of algebraic groups are well-known branching rules for the pairs $(\SL_n, \GL_{n-1})$ and $(\Spin_n, \Spin_{n-1})$, which trace back to the work of Gelfand and Tsetlin~\cite{GT1,GT2}. A remarkable feature of these branching rules is that they are \textit{multiplicity free}, that is, each multiplicity is at most one. As a consequence, in these cases the whole branching rule is completely determined by the branching monoid. For both above-mentioned pairs, using the standard description of the branching rules in terms of interlacing conditions for dominant weights (see \cite[\S\S\,66,\,129]{Zh} or~\cite[Ch.~8]{GW} for details), it is easy to compute the branching monoid, which turns out to be free. The indecomposable elements of this monoid in both cases were explicitly written down in~\cite[Theorem~7]{AkP}. In fact, by a result of Kr\"amer~\cite{Kra} there are no other pairs $(G,H)$ with $G$ simple and simply connected for which the branching rule is multiplicity free.

When the branching rule for a pair $(G,H)$ is not multiplicity free, the multiplicities can be arbitrarily large (see~\cite[Theorem~2]{AkP}), and the problem of describing them becomes much more complicated. In the setting where $G$ is one of the classical groups $\SL_n$, $\SO_n$, or $\Sp_{2n}$, numerous papers have been devoted to developing branching rules for various connected reductive subgroups~$H$ including maximal reductive subgroups and symmetric subgroups; see an extensive list of references on this topic in~\cite{HTW2}.
In these cases, the multiplicities are usually expressed by combinatorial formulas that involve Littlewood--Richardson coefficients, Young tableaux, etc. However, using these formulas even for computing a single multiplicity may require quite nontrivial calculations.

Despite the result of Kr\"amer and the complicated situation with describing branching rules in the general case, for a given pair $(G,H)$ it may happen that multiplicities $m_\lambda(\mu)$ are still at most one whenever $\lambda$ varies over a certain infinite subset of $\Lambda^+(G)$. In this case, one may hope that the corresponding ``part'' of the branching rule for $(G,H)$ admits a simple description, and a natural problem is to find such a description.

From now on assume for simplicity that $G$ is semisimple and simply connected and let $\pi_1, \ldots, \pi_s$ be all the fundamental weights of~$G$. Given a subset $I$ of the set $S = \lbrace 1, \ldots, s\rbrace$, let $\Lambda^+_I(G) \subset \Lambda^+(G)$ be the set of all dominant weights of~$G$ that are linear combinations of fundamental weights $\pi_i$ with $i \in I$, so that $\Lambda^+_\varnothing(G) = \lbrace 0 \rbrace$ and $\Lambda^+_S(G) = \Lambda^+(G)$. It is well known that for every $I \subset S$ there exists a unique flag variety (that is, a complete homogeneous space) $X_I$ of the group~$G$ with the following property: given $\lambda \in \Lambda^+(G)$, the representation $R_G(\lambda)$ is realized as the space of sections of a homogeneous line bundle on~$X_I$ if and only if $\lambda \in \Lambda^+_I(G)$. Note that $X_S$ is nothing but the variety $G/B_G$ according to the Borel--Weil theorem. The above property of $X_I$ suggests that the behaviour of multiplicities $m_\lambda(\mu)$ with $\lambda \in \Lambda^+_I(G)$ is closely related to the geometry of the natural action of~$H$ on~$X_I$.

The starting point for our paper is the following result of Vinberg and Kimelfeld (see~\cite[Corollary~1]{VK78}): given $I \subset S$, the inequality $m_\lambda(\mu) \le 1$ holds for all $\lambda \in \Lambda^+_I(G)$ and $\mu \in \Lambda^+(H)$ if and only if the natural action of $H$ on $X_I$ is \textit{spherical}, that is, $X_I$ contains an open orbit for the induced action of~$B_H$. It is easy to see that for every spherical action of $H$ on $X_I$ the whole set of multiplicities $m_\lambda(\mu)$ with $\lambda \in \Lambda^+_I(G)$ is uniquely determined by the \textit{restricted branching monoid} $\Gamma_I(G,H) = \lbrace (\lambda;\mu) \in \Gamma(G,H) \mid \lambda \in \Lambda^+_I(G) \rbrace$. This motivates the problem of computing the monoids $\Gamma_I(G,H)$ corresponding to all spherical actions on flag varieties.

In its turn, by now the problem of classifying all spherical actions on flag varieties has been solved only under certain restrictions on the groups $G,H$ or the subset~$I$. Apart from the case $I = S$ settled by Kr\"amer (see the above discussion), below we list all cases with $G$ simple for which the classification is known:
\begin{enumerate}[label=\textup{(C\arabic*)},ref=\textup{C\arabic*}]
\item \label{case_C1}
$H$ is a Levi subgroup of~$G$ (with contributions of \cite{Lit, MWZ1, MWZ2, Stem}, see also~\cite{Pon13});

\item \label{case_C2}
$H$ is a symmetric subgroup of~$G$ (see~\cite{HNOO});

\item \label{case_C3}
$G = \SL_n$ (see~\cite{AvP});

\item \label{case_C4}
$G$ is an exceptional simple group, $H$ is a maximal reductive subgroup of~$G$, and $|I|=1$ (see the preprint~\cite{Nie}).
\end{enumerate}

A description of the monoids $\Gamma_I(G,H)$ in case~(\ref{case_C1}) follows from results of the papers~\cite{Pon15,Pon17}; we present it in Theorems~\ref{thm_Levi_sl} and~\ref{thm_Levi_non-sl} for completeness. The monoids $\Gamma_I(G,H)$ in case~(\ref{case_C4}) were computed in~\cite{Nie}. The main goal of the present paper is to compute the monoids $\Gamma_I(G,H)$ for cases~(\ref{case_C2}) and~(\ref{case_C3}), see Theorems~\ref{thm_symmetric} and~\ref{thm_sl}, respectively. We remark that in case~(\ref{case_C2}) (resp.~(\ref{case_C3})) it suffices to consider only triples $(G,H,I)$ that do not fall into case~(\ref{case_C1}) (resp. cases~(\ref{case_C1}) and~(\ref{case_C2})).

In case (\ref{case_C2}) with $G$ an exceptional simple group, the subgroup $H$ is maximal reductive in~$G$, hence for $|I|=1$ we recover (part of) results of~\cite{Nie} for case~(\ref{case_C4}). In fact, in case~(\ref{case_C2}) there are only three triples $(G,H,I)$ with $G$ exceptional and $|I| \ge 2$, see Table~\ref{table_sym}.

A significant particular case of~(\ref{case_C3}) is given by the condition $I = \lbrace 1 \rbrace$. In this situation, $X_I$ is just the projective space~$\PP((\FF^n)^*)$ where $(\FF^n)^*$ stands for the vector space dual to~$\FF^n$. Consequently, a subgroup $H \subset \SL_n$ acts spherically on~$X_I$ if and only if the group $H \times \FF^\times$ acts spherically on~$(\FF^n)^*$. (Here $\FF^\times$ is the multiplicative group of~$\FF$, which acts on $(\FF^n)^*$ by scalar transformations.) More generally, given a connected reductive algebraic group~$K$, every finite-dimensional $K$-module on which $K$ acts spherically (that is, $B_K$ has an open orbit) is called a \textit{spherical $K$-module}. An important invariant of a spherical $K$-module $V$ is its \textit{weight monoid}, consisting of all $\lambda \in \Lambda^+(K)$ for which the $K$-module $R_K(\lambda)$ occurs in the symmetric algebra of~$V^*$. There is a complete classification of all spherical modules obtained in~\cite{Kac}, \cite{BR}, and~\cite{Lea} (see \S\,\ref{subsec_spherical_modules} for more details); moreover, the weight monoids of all spherical modules are also known thanks to the works~\cite{HU} and~\cite{Lea}. Returning to the situation where a subgroup $H \subset \SL_n$ acts spherically on $X_I = \PP((\FF^n)^*)$, an easy observation shows that the monoid $\Gamma_I(G,H)$ is canonically isomorphic to the weight monoid of the spherical $(\FF^\times \times H)$-module~$(\FF^n)^*$; see~\S\,\ref{subsec_C3}. This enables us to assume $I \ne \lbrace 1 \rbrace$ (and also $I \ne \lbrace n-1 \rbrace$ by duality reasons) when computing the monoids $\Gamma_I(G,H)$ in case~(\ref{case_C3}).

We now briefly describe our method for computing the monoid $\Gamma_I(G,H)$ corresponding to a spherical action of a subgroup $H \subset G$ on a flag variety~$X_I$. A general result (see Theorem~\ref{thm_monoid_is_free}) shows that for simply connected $G$ this monoid is always free, hence it is enough to compute its rank and find the required number of its indecomposable elements. Thanks to a result of Panyushev~\cite{Pan} and the above-mentioned results on spherical modules, determining the rank of $\Gamma_I(G,H)$ reduces to computing a certain Levi subgroup~$M$ of $H$ together with a certain spherical $M$-module~$V$ (see Corollary~\ref{crl_sphericity_criterion}). In turn, computing the pair $(M,V)$ becomes effective for an appropriate choice of~$H$ within its conjugacy class (see Corollary~\ref{crl_rank_of_Gamma_refined}). Remarkably, all the pairs $(M,V)$ for case~(\ref{case_C2}) were computed in~\cite{HNOO}, therefore in this case we get all the values of rank of $\Gamma_I(G,H)$ almost for free. Once the rank of $\Gamma_I(G,H)$ has been determined, to find all indecomposable elements of $\Gamma_I(G,H)$ it suffices to explicitly compute the decompositions into irreducible summands for the restrictions to $H$ of several simple $G$-modules with ``small'' highest weights. More precisely, in all cases treated in this paper it turns out to be enough to find the decompositions into irreducible summands for the restrictions to~$H$ of all simple $G$-modules $R_G(\pi_i)$ with $i \in I$ and sometimes of one more simple $G$-module $R_G(\pi_i + \pi_j)$ with (not necessarily distinct) $i,j \in I$.
As a consequence, all indecomposable elements of $\Gamma_I(G,H)$ have the form $(\pi_i; *)$ or $(\pi_i + \pi_j; *)$ with $i,j \in I$. We note however that the latter property does not hold in general as can be seen in various examples in case (\ref{case_C3}) with $I= \lbrace 1 \rbrace$.

It is worth mentioning that the description of the monoids $\Gamma_I(G,H)$ for cases~(\ref{case_C1}) and~(\ref{case_C4}) can be also obtained by using the methods of the present paper. In case~(\ref{case_C1}) this would give a proof completely different from that in~\cite{Pon15,Pon17}. In case~(\ref{case_C4}) this would only simplify computing the rank of the monoids $\Gamma_I(G,H)$ (compared with the method of~\cite{Nie}).

At last, we would like to mention two possible directions for further research related to the present paper. Of course, the first one is to complete the classification of spherical actions on flag varieties of simple groups and determine the corresponding restricted branching monoids. The second direction is to study flag varieties $X_I$ that have complexity~$1$ under the action of a connected reductive subgroup $H \subset G$. (Here complexity means the codimension of a generic $B_H$-orbit in~$X_I$; in this terminology, spherical actions are actions of complexity~$0$.) In this setting, it would be interesting to classify all triples $(G,H,I)$ with $G$ simple such that $H$ acts on $X_I$ with complexity~$1$ and to find the corresponding branching rules for restrictions to $H$ of all representations $R_G(\lambda)$ with $\lambda \in \Lambda^+_I(G)$. (Note that these restrictions are not multiplicity free anymore.) Feasibility of these problems is suggested by the fact that they have already been solved  in the case $I = S$ (see~\cite{AkP}) and in the case where $H$ is a Levi subgroup of~$G$ (see~\cite{Pon15, Pon17}).

This paper is organized as follows. In \S\,\ref{sect_N&C}, we set up notation and conventions used throughout the paper. In \S\,\ref{sect_prelim}, we discuss several basic notions needed in this paper. In \S\,\ref{sect_RBM&SAFV}, we study general properties of restricted branching monoids corresponding to spherical actions on flag varieties. In \S\,\ref{sect_main_theorems}, we present the classification of spherical actions on flag varieties in cases~(\ref{case_C1}), (\ref{case_C2}), and~(\ref{case_C3}) together with the corresponding restricted branching monoids. At last, the restricted branching monoids in cases~(\ref{case_C2}) and~(\ref{case_C3}) are computed in~\S\,\ref{sect_proofs_C2} for case~(\ref{case_C2}) and in~\S\,\ref{sect_proofs_C3} for case~(\ref{case_C3}).

\subsection*{Acknowledgements}
The authors are grateful to Dmitry Timashev for useful discussions. The first author thanks the Institute for Fundamental Science in Moscow for providing excellent working conditions.

The results of \S\S\,\ref{subsec_spin_odd}--\ref{subsec_e7_de6xa1}, \ref{subsec_sl_spslsl}--\ref{subsec_sl_spin} are obtained by the first author supported by the grant RSF--DFG 16-41-01013. The results of \S\S\,\ref{subsec_sl_so}--\ref{subsec_sp_spsp}, \ref{subsec_sl_spsl}--\ref{subsec_sl_spsp} are obtained by the second author supported by  the RFBR grant no. 16-01-00818 and by the DFG grant PE 980/6-1.

\section{Notation and conventions}
\label{sect_N&C}

All objects considered in this paper are defined over an algebraically closed field $\FF$ of characteristic~$0$. We denote by $\FF^\times$ the multiplicative group of~$\FF$.

Throughout the paper, all topological terms refer to the Zariski topology. All subgroups of algebraic groups are assumed to be closed. The Lie algebras of algebraic groups denoted by capital Latin letters are denoted by the corresponding small Gothic letters. Given an algebraic group~$K$, a \textit{$K$-variety} is an algebraic variety equipped with a regular action of~$K$.

Notation:

$\ZZ^+ = \lbrace z \in \ZZ \mid z \ge 0 \rbrace$;

$e$ is the identity element of any group;

$|X|$ is the cardinality of a finite set~$X$;

$V^*$ is the vector space of linear functions on a vector space~$V$;

$\operatorname{S}^d V$ is the $d$th symmetric power of a vector space~$V$;

$\wedge^d V$ is the $d$th exterior power of a vector space~$V$;

$\mathfrak X(K)$ is the character group of a group~$K$ (in additive notation);

$K'$ is the derived subgroup of a group~$K$;

$C_K$ is the connected component of the identity of the center of an algebraic group~$K$;

$K^u$ is the unipotent radical of an algebraic group~$K$;

$\rk K$ is the rank of an algebraic group~$K$, that is, the dimension of a maximal torus of~$K$;

$K_x$ is the stabilizer of a point~$x$ of a $K$-variety $X$;

$\FF[X]$ is the algebra of regular functions on an algebraic variety~$X$;

$\FF(X)$ is the field of rational functions on an irreducible algebraic variety~$X$;

$T_xX$ is the tangent space of an algebraic variety $X$ at a point $x \in X$;

$V^{(K)}_\chi$ is the space of semi-invariants of weight $\chi \in \mathfrak X(K)$ for an action of a group $K$ on a vector space~$V$.

The simple roots and fundamental weights of simple algebraic groups are numbered as in~\cite{Bo}, and the same applies to nodes of connected Dynkin diagrams.

Given two algebraic groups $F \subset K$ and a $K$-module~$V$, the restriction of $V$ to~$F$ is denoted by $\left. V \right|_F$.

Given an algebraic group $K$ and subgroups $K_1, K_2 \subset K$, the notation $K = K_1 \cdot K_2$ means that $K$ is an \textit{almost direct product} of $K_1, K_2$, that is, $K = K_1K_2$, the subgroups $K_1, K_2$ commute with each other and the intersection $K_1 \cap K_2$ is finite (equivalently, there is a Lie algebra direct sum $\mathfrak k = \mathfrak k_1 \oplus \mathfrak k_2$).

For every connected reductive algebraic group~$K$, we fix a Borel subgroup $B_K$ and a maximal torus~$T_K \subset B_K$. Let $B^-_K$ be the Borel subgroup of $K$ opposite to~$B_K$ with respect to~$T_K$, that is, $B_K \cap B^-_K = T_K$. The lattices $\mathfrak X(B_K)$ and $\mathfrak X(B^-_K)$ are identified with $\mathfrak X(T_K)$ via restricting characters to the torus~$T_K$. We denote by $\Lambda^+(K)$ the set of dominant weights of~$T_K$ with respect to~$B_K$. For every $\lambda \in \Lambda^+(K)$, we denote by $R_K(\lambda)$ the simple $K$-module with highest weight~$\lambda$ and by $\lambda^*$ the highest weight of the simple $K$-module $R_K(\lambda)^*$.

Throughout the paper, $G$ denotes a simply connected semisimple algebraic group. Let $\Delta_G \subset \mathfrak X(T_G)$ be the root system of $G$ with respect to~$T_G$ and let $\Pi_G \subset \Delta_G$ be the set of simple roots relative to~$B_G$. For every $\alpha \in \Delta_G$, let $\mathfrak g_\alpha \subset \mathfrak g$ be corresponding root subspace.
Let $\pi_1, \ldots, \pi_s \in \Lambda^+(G)$ be all the fundamental weights of~$G$, so that $\Lambda^+(G) = \ZZ^+ \lbrace \pi_1, \ldots, \pi_s \rbrace$, and put $S = \lbrace 1, \ldots, s \rbrace$. For every $i \in S$, let $\alpha_i \in \Pi_G$ be the corresponding simple root.

For every subset $I \subset S$, we consider the monoid $\Lambda^+_I(G) = \ZZ^+ \lbrace \pi_i \mid i \in I \rbrace \subset \Lambda^+(G)$. We put $\lambda_I = \sum \limits_{i \in I} \pi_i$ and let $P_I$ be the stabilizer in $G$ of the line spanned by a highest weight vector (with respect to~$B_G$) in~$R_G(\lambda_I)$.
Then $P_I$ is a parabolic subgroup of~$G$ containing $B_G$ and the Lie algebra $\mathfrak p_I$ is generated by $\mathfrak b_G$ and the root subspaces $\mathfrak g_{-\alpha_i}$ with $i \notin I$.
Let also $P_I^- \supset B_G^-$ be the parabolic subgroup of~$G$ opposite to~$P_I$, that is, the Lie algebra $\mathfrak p^-_I$ is generated by $\mathfrak b^-_G$ and the root subspaces $\mathfrak g_{\alpha_i}$ with $i \notin I$.
Further, we put $L_I = P_I \cap P_I^-$; this is a Levi subgroup of both $P_I$ and $P_I^-$.
Note that the character lattices $\mathfrak X(P_I)$, $\mathfrak X(P_I^-)$, and $\mathfrak X(L_I)$ are canonically identified with $\ZZ \Lambda^+_I(G) = \ZZ \lbrace \pi_i \mid i \in I \rbrace$.
At last, we let $X_I = G/P_I^-$ be the flag variety of~$G$ corresponding to~$I$.

\section{Preliminaries}
\label{sect_prelim}

\subsection{Spherical varieties}

Given a connected reductive algebraic group~$K$, an irreducible $K$-variety $X$ is said to be \textit{spherical} (or \textit{$K$-spherical}) if $X$ contains an open orbit for the induced action of~$B_K$.

Given a $K$-spherical variety~$X$, put
\[
\Lambda_X = \lbrace \lambda \in \mathfrak X(T_K) \mid \FF(X)^{(B)}_\lambda \ne 0 \rbrace.
\]
Clearly, $\Lambda_X$ is a sublattice of~$\mathfrak X(T_K)$; it is said to be the \textit{weight lattice} of~$X$. The rank of this lattice is referred to as the \textit{rank} of the $K$-spherical variety~$X$; we denote it by~$\rk_K X$.

\subsection{Spherical modules}
\label{subsec_spherical_modules}

All modules considered in this subsection are assumed to be finite-dimensional.

Let $K$ be a connected reductive algebraic group. A \textit{spherical $K$-module} is a $K$-module $V$ that is spherical as a $K$-variety. According to~\cite[Theorem~2]{VK78}, a $K$-module $V$ is spherical if and only if the induced representation of $K$ on $\FF[V]$ is multiplicity free.
In this case, the highest weights of simple $K$-modules occurring in $\FF[V]$ form a monoid $\mathrm E(V)$, called the \textit{weight monoid} of~$V$.
It is well known that $\mathrm E(V)$ is free; see, for instance,~\cite[Theorem~3.2]{Kn}. Moreover, one has $\rk_K V = \rk \mathrm E(V)$; see, for instance,~\cite[Proposition~5.14]{Tim}.

All the terminology introduced below in this subsection follows Knop~\cite[\S\,5]{Kn}.

Given two connected reductive algebraic groups $K_1,K_2$, let $V_1$ be a $K_1$-module, let $V_2$ be a $K_2$-module, and consider the corresponding representations $\rho_1 \colon K_1 \to \GL(V_1)$ and $\rho_2 \colon K_2 \to \GL(V_2)$.
We say that the pairs $(K_1, V_1)$ and $(K_2, V_2)$ are \textit{geometrically equivalent} (or just \textit{equivalent} for short) if there exists an isomorphism $V_1 \xrightarrow{\sim} V_2$ identifying the groups $\rho_1(K_1) \subset \GL(V_1)$ and $\rho_2(K_2) \subset \GL(V_2)$. In other words, the pairs $(K_1, V_1)$ and $(K_2, V_2)$ are equivalent if and only if they define the same linear group. As an important example, every pair $(K,V)$ is equivalent to the pair $(K,V^*)$.

Given a $K$-module~$V$, consider the corresponding representation $\rho \colon K \to \GL(V)$. We say that the $K$-module $V$ is \textit{saturated} if the dimension of the center of~$\rho(K)$ equals the number of irreducible summands of~$V$. (Equivalently, the centralizer of $\rho(K)$ in $\GL(V)$ is contained in~$\rho(K)$.)

We say that a $K$-module $V$ is \textit{decomposable} if there exist connected reductive algebraic groups $K_1, K_2$, a $K_1$-module~$V_1$, and a $K_2$-module~$V_2$ such that the pair $(K,V)$ is equivalent to $(K_1 \times K_2, V_1 \oplus V_2)$. Clearly, in this situation $V$ is a spherical $K$-module if and only if $V_1$ is a spherical $K_1$-module and $V_2$ is a spherical $K_2$-module, in which case $\mathrm E(V) \simeq \mathrm E(V_1) \oplus \mathrm E(V_2)$ and $\rk_K V = \rk_{K_1} V_1 + \rk_{K_2} V_2$. We say that $V$ is \textit{indecomposable} if $V$ is not decomposable.

There exists a complete classification (up to equivalence) of all indecomposable saturated spherical modules. It was obtained in~\cite{Kac} for simple modules and independently in~\cite{BR} and~\cite{Lea} for non-simple modules. Moreover, for each of these modules the corresponding weight monoids are known thanks to the papers~\cite{HU} (the case of simple modules) and~\cite{Lea} (the case of non-simple modules). A complete list (up to equivalence) of all indecomposable saturated spherical modules can be found in~\cite[\S\,5]{Kn} along with various additional data, including the rank and indecomposable elements of the weight monoids.

For an arbitrary spherical $K$-module~$V$, fix a decomposition $V = V_1 \oplus \ldots \oplus V_n$ into a direct sum of simple $K$-submodules and let $Z$ be the subgroup of $\GL(V)$ consisting of the elements that act by scalar transformations on each~$V_i$, $i = 1,\ldots,n$. Then $V$ is a saturated spherical $(Z \times K')$-module, hence the pair $(Z \times K', V)$ is equivalent to $(K_1 \times \ldots \times K_m, W_1 \oplus \ldots \oplus W_m)$ where $K_i$ is a connected reductive algebraic group and $W_i$ is an indecomposable saturated spherical $K_i$-module for each $i = 1, \ldots, m$. In this situation, it is easy to see that $\rk_K V = \rk_{Z \times K'} V = \rk_{K_1} W_1 + \ldots + \rk_{K_m} W_m$. Note that, up to equivalence, for every $i = 1,\ldots, m$ the pair $(K_i, W_i)$ is uniquely determined by the pair $(K'_i, W_i)$, and the whole collection of pairs $(K'_i, W_i)$ is uniquely determined by the pair $(K', V)$, hence to compute $\rk_K V$ it suffices to know only the pair $(K', V)$. The latter observation together with the information from~\cite[\S\,5]{Kn} on ranks of indecomposable saturated spherical modules will be always used for computing the ranks of spherical modules in~\S\S\,\ref{sect_proofs_C2},~\ref{sect_proofs_C3}.

\subsection{Homogeneous line bundles}
\label{subsec_hlb}

Let $K$ be a subgroup of~$G$ and consider the homogeneous space $X = G/K$.

A \textit{homogeneous line bundle} on $X$ is a line bundle $L$ on~$X$ equipped with an action of $G$ such that the natural projection $L \to X$ is $G$-equivariant and the stabilizer in $G$ of any point $x \in X$ acts linearly on the fiber over~$x$. In this case, the space of global sections $H^0(X,L)$ is naturally equipped with the structure of a $G$-module. Let $\Pic_G X$ denote the group of ($G$-equivariant isomorphism classes of) homogeneous line bundles on~$X$.

Given a character $\chi \in \mathfrak X(K)$, consider the one-dimensional $K$-module $\FF^1_{\chi}$ on which $K$ acts via the character~$\chi$. Let $K$ act on $G$ by right multiplication and let $L(\chi)$ be the quotient $(G \times \FF^1_{\chi})/K$ with respect to the diagonal action of~$K$. Considered together with the natural map $L(\chi) \to X$, $L(\chi)$ becomes a homogeneous line bundle on~$X$, and there is a $G$-module isomorphism
\begin{equation} \label{eqn_iso_H^0}
H^0(X, L(\chi)) \simeq \FF[G]^{(K)}_{-\chi}.
\end{equation}

According to \cite[Theorem~4]{Pop}, the above-described map $\mathfrak X(K) \to \Pic_G X$, $\chi \mapsto L(\chi)$, is an isomorphism.

\subsection{Branching monoids and restricted branching monoids}

Let $H \subset G$ be a connected reductive subgroup. For every $\lambda \in \Lambda^+(G)$ and every $\mu \in \Lambda^+(H)$, the number
\[
m_\lambda(\mu) = \dim \Hom_H(R_H(\mu), \left. R_G(\lambda) \right|_H)
\]
is called the \textit{multiplicity} of $R_H(\mu)$ in $R_G(\lambda)$. Clearly, $m_\lambda(\mu) = \dim R_G(\lambda)^{(B_H)}_\mu$. We put
\[
\Gamma(G,H) = \lbrace (\lambda; \mu) \in \Lambda^+(G) \times \Lambda^+(H) \mid m_\lambda(\mu) > 0 \rbrace.
\]
By definition, $(\lambda; \mu) \in \Gamma(G,H)$ if and only if the $H$-module $\left. R_G(\lambda) \right|_H$ contains a submodule isomorphic to~$R_H(\mu)$.

Regard the algebra $\FF[G]$ as a ($G \times G$)-module on which the left (resp. right) factor acts by the formula $(gf)(x) = f(g^{-1}x)$ (resp. $(gf)(x) = f(xg)$), where $g,x \in G$ and $f \in \FF[G]$. Then there is the following well-known ($G \times G$)-module isomorphism (see, for instance, \cite[II.3.1, Theorem~3]{Kr} or \cite[Theorem~2.15]{Tim}):
\begin{equation} \label{eqn_GxG}
\FF[G] \simeq \bigoplus \limits_{\lambda \in \Lambda^+(G)} R_G(\lambda) \otimes R_G(\lambda)^*,
\end{equation}
where on the right-hand side the left (resp. right) factor of $G \times G$ acts on the left (resp. right) tensor factor of each summand.

In what follows, for every subgroup $K \subset G \times G$ the $K$-semi-invariants in $\FF[G]$ are taken with respect to the action of $K$ induced by the above-mentioned action of $G \times G$.

From~(\ref{eqn_GxG}) we see that $m_\lambda(\mu) = \dim \FF[G]^{(B_H \times B_G^-)}_{(\mu, -\lambda)}$. Since
\[
\FF[G]^{(B_H \times B_G^-)}_{(\mu_1, -\lambda_1)} \cdot \FF[G]^{(B_H \times B_G^-)}_{(\mu_2, -\lambda_2)} \subset \FF[G]^{(B_H \times B_G^-)}_{(\mu_1 + \mu_2, -\lambda_1 - \lambda_2)}
\]
for any two pairs $(\lambda_1; \mu_1), (\lambda_2; \mu_2) \in \Lambda^+(G) \times \Lambda^+(H)$ and the algebra $\FF[G]$ contains no zero divisors, it follows that $\Gamma(G,H)$ is a monoid. In fact, this monoid is finitely generated, see \cite[Theorem~2(ii)]{AkP} (compare also with \cite[\S\,2, Theorem]{Ela}). The terminology introduced in the definition below follows Yacobi~\cite[\S\,2.1]{Yac}.

\begin{definition}
The monoid $\Gamma(G,H)$ is called the \textit{branching monoid} for the pair $(G,H)$.
\end{definition}

Given any subset $I \subset S$, we introduce the monoid
\[
\Gamma_I(G,H) = \lbrace (\lambda; \mu) \in \Gamma(G,H) \mid \lambda \in \Lambda^+_I(G) \rbrace.
\]

\begin{definition}
The monoid $\Gamma_I(G,H)$ is called the \textit{restricted branching monoid} corresponding to the subset~$I$.
\end{definition}

Formula (\ref{eqn_GxG}) implies that $(\lambda; \mu) \in \Gamma_I(G,H)$ if and only if $\FF[G]^{(B_H \times P_I^-)}_{(\mu, -\lambda)} \ne 0$. In particular, we obtain the following fact, which can be also deduced directly from the definitions.

\begin{remark} \label{remark_00}
The element $(0;0)$ is the unique element in $\Gamma_I(G,H)$ of the form~$(0;*)$.
\end{remark}

\section{Restricted branching monoids related to spherical actions on flag varieties}
\label{sect_RBM&SAFV}

Throughout this section, $H$ is a connected reductive subgroup of~$G$ and $I \subset S$ is an arbitrary subset.

\subsection{Characterization of spherical actions on flag varieties}

The following theorem is a particular case of~\cite[Corollary~1]{VK78}.

\begin{theorem} \label{thm_criterion_spherical}
The following conditions are equivalent:
\begin{enumerate}[label=\textup{(\arabic*)},ref=\textup{\arabic*}]
\item \label{thm_criterion_spherical_1}
For every $\lambda \in \Lambda^+_I(G)$, the $H$-module $\left. R_G(\lambda) \right|_H$ is multiplicity free.

\item
The flag variety $X_I$ is $H$-spherical.
\end{enumerate}
\end{theorem}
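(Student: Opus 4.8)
# Proof Proposal for Theorem \ref{thm_criterion_spherical}

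The plan is to reformulate both conditions as statements about the $B_H$-module structure of a suitable algebra of functions and then invoke the Vinberg--Kimelfeld multiplicity-freeness criterion (\cite[Theorem~2]{VK78}), exactly as in the general statement \cite[Corollary~1]{VK78} that this is a particular case of. First I would recall from \S\,\ref{subsec_hlb} that homogeneous line bundles on $X_I = G/P_I^-$ are classified by $\mathfrak X(P_I^-) = \ZZ\Lambda^+_I(G)$, and that for $\chi = \lambda \in \Lambda^+_I(G)$ the isomorphism~(\ref{eqn_iso_H^0}) together with~(\ref{eqn_GxG}) gives a $G$-module isomorphism $H^0(X_I, L(\lambda)) \simeq R_G(\lambda)$, since $\FF[G]^{(P_I^-)}_{-\lambda}$ picks out exactly the summand $R_G(\lambda) \otimes R_G(\lambda)^*_{\mathrm{h.w.}}$. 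Thus the $H$-modules $\left.R_G(\lambda)\right|_H$ with $\lambda \in \Lambda^+_I(G)$ are precisely the spaces of sections of homogeneous line bundles on the $H$-variety $X_I$.

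Next I would assemble these into a single graded algebra. Consider the multiplicative subgroup $\Lambda^+_I(G) \subset \mathfrak X(P_I^-)$ and form the algebra
\[
A = \bigoplus_{\lambda \in \Lambda^+_I(G)} \FF[G]^{(P_I^-)}_{-\lambda} \subset \FF[G],
\]
which is nothing but the multihomogeneous coordinate ring of $X_I$ with respect to the semigroup of line bundles $L(\lambda)$, $\lambda \in \Lambda^+_I(G)$ (equivalently, $A = \FF[G]^{(P_I^-)^u}$ decomposed according to the $T_{L_I}$-weights lying in $\Lambda^+_I(G)$). By the discussion above, $A \simeq \bigoplus_{\lambda \in \Lambda^+_I(G)} R_G(\lambda)$ as a $G$-module, hence as an $H$-module. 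Now restrict the $H$-action: $X_I$ is $H$-spherical if and only if $B_H$ has an open orbit on $X_I$, and by \cite[Theorem~2]{VK78} (the criterion recalled in \S\,\ref{subsec_spherical_modules} for modules, which holds in the stated generality for the section algebra of a quasi-affine homogeneous space — here one uses that $A$ is the coordinate ring of the quasi-affine variety $G/(P_I^-)^u$ intersected with the relevant weight spaces) this is equivalent to the $H$-module $A$ being multiplicity free. Since $A = \bigoplus_{\lambda \in \Lambda^+_I(G)} \left.R_G(\lambda)\right|_H$ and the summands are indexed by distinct characters of the central torus acting on $A$ (the grading group $\Lambda^+_I(G)$ is detected inside $H$ only after tensoring with this torus — more precisely, one works with $\FF^\times$-extensions so that each graded piece becomes isotypic for the torus), $A$ is multiplicity free over the enlarged group if and only if each individual $\left.R_G(\lambda)\right|_H$ is multiplicity free. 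This is the content of condition~(\ref{thm_criterion_spherical_1}), giving the equivalence.

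The main obstacle is the bookkeeping in the last step: one must be careful that multiplicity-freeness of the whole graded algebra $A$ as an $H$-module is genuinely equivalent to multiplicity-freeness of each graded component $\left.R_G(\lambda)\right|_H$, and not merely implied by it. The subtlety is that two different components $R_G(\lambda)$ and $R_G(\lambda')$ could a priori contribute isomorphic $H$-submodules without violating multiplicity-freeness of $A$ — so one cannot simply say ``$A$ multiplicity free $\Leftrightarrow$ each piece multiplicity free'' at the level of the group $H$ alone. The clean fix, which is exactly what \cite[Corollary~1]{VK78} does, is to pass to $\widehat{H} = H \times T$ where $T$ is the torus $\mathfrak X(P_I^-) \otimes \FF^\times$ (or just a rank-$|I|$ torus surjecting onto $\Lambda^+_I(G)$) acting on $A$ so that the $\lambda$-component has weight $\lambda$: then the $\widehat{H}$-isotypic decomposition refines the $\Lambda^+_I(G)$-grading, $A$ is $\widehat{H}$-multiplicity free iff each $R_G(\lambda)$ is $H$-multiplicity free, and $\widehat{B_H} = B_H \times T$ has an open orbit on (the affine cone over) $X_I$ iff $B_H$ has an open orbit on $X_I$ together with the torus acting transitively on the fibres, which holds automatically. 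Once this reduction is set up correctly, applying \cite[Theorem~2]{VK78} to $(\widehat H, A)$ and unwinding the definitions yields the stated equivalence; the remainder is routine.
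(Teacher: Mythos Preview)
The paper does not give a proof of this theorem at all: it simply states that the result is a particular case of \cite[Corollary~1]{VK78} and moves on. Your proposal is therefore not competing against any argument in the paper; rather, you have correctly identified the source and sketched the proof that lies behind that citation. The strategy---pass to the multigraded section algebra $A = \bigoplus_{\lambda \in \Lambda^+_I(G)} R_G(\lambda)$, enlarge $H$ by a torus $T$ carrying the $\Lambda^+_I(G)$-grading, and apply \cite[Theorem~2]{VK78} to the pair $(\widehat H, A)$---is exactly the content of the Vinberg--Kimelfeld corollary, and you have isolated the one genuine subtlety (that multiplicity-freeness must be tested for $\widehat H$, not $H$, so that distinct graded pieces cannot interfere).

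A couple of small points if you want to tighten the write-up. First, your parenthetical identification of $A$ with a piece of $\FF[G]^{(P_I^-)^u}$ is slightly off: the $P_I^-$-eigenfunctions are invariant under the common kernel of the characters of $P_I^-$, which is $(P_I^-)^u \cdot L_I'$, not just $(P_I^-)^u$; but since by~(\ref{eqn_H^0_for_XI}) the only nonzero weight spaces are those with $-\chi \in \Lambda^+_I(G)$ anyway, the distinction is harmless here. Second, the sentence ``$B_H \times T$ has an open orbit on (the affine cone over) $X_I$ iff $B_H$ has an open orbit on $X_I$ together with the torus acting transitively on the fibres, which holds automatically'' is correct but could use one more line: the point is that $\operatorname{Spec} A$ is a principal $T$-bundle over an open subset of~$X_I$ (or, in concrete terms, the $G$-orbit of a highest-weight vector in $\bigoplus_{i \in I} R_G(\pi_i)$), so the $T$-factor exactly accounts for the fibre direction. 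With these cosmetic fixes your argument is complete and faithful to the original.
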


Note that condition~(\ref{thm_criterion_spherical_1}) is equivalent to $m_\lambda(\mu) \le 1$ for all $\lambda \in \Lambda^+_I(G)$ and~$\mu \in \Lambda^+(H)$.

It is easy to see that under the conditions of Theorem~\ref{thm_criterion_spherical} the restriction to $H$ of any simple $G$-module $R_G(\lambda)$ with $\lambda \in \Lambda^+_I(G)$ is uniquely determined by the monoid $\Gamma_I(G,H)$ as follows:
\begin{equation} \label{eqn_restriction}
\left. R_G(\lambda) \right|_H \simeq \bigoplus \limits_{\mu \in \Lambda^+(H) \, : \, (\lambda; \mu) \in \Gamma_I(G,H)} R_H(\mu).
\end{equation}

Given a character $\lambda \in \mathfrak X(P_I^-)$, let $L(\lambda)$ be the corresponding homogeneous line bundle on~$X_I$ (see~\S\,\ref{subsec_hlb}). Comparing formulas~(\ref{eqn_iso_H^0}) and~(\ref{eqn_GxG}) we find that there is a $G$-module isomorphism
\begin{equation} \label{eqn_H^0_for_XI}
H^0(X_I, L(\lambda)) \simeq
\begin{cases} R_G(\lambda) & \text{if} \ \lambda \in \Lambda^+_I(G);\\
0 & \text{otherwise}.
\end{cases}
\end{equation}

\subsection{Freeness of restricted branching monoids}

Let $\mathcal D$ denote the set of $B_H$-stable prime divisors on~$X_I$.
Note that $\mathcal D$ is finite whenever $X_I$ is $H$-spherical.

\begin{theorem} \label{thm_monoid_is_free}
Under the conditions of Theorem~\textup{\ref{thm_criterion_spherical}}, the monoid $\Gamma_I(G,H)$ is free and its rank equals the cardinality of~$\mathcal D$.
\end{theorem}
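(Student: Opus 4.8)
The plan is to identify the monoid $\Gamma_I(G,H)$ with the monoid of $B_H$-semi-invariant rational functions on $X_I$ that are regular and have divisor supported on $\mathcal D$, and then to prove freeness by exhibiting this function monoid as (isomorphic to) a submonoid of $\ZZ^{\mathcal D}$ that is ``saturated in the appropriate direction''. Concretely, by formula~(\ref{eqn_H^0_for_XI}) together with~(\ref{eqn_iso_H^0}), for $\lambda \in \Lambda^+_I(G)$ the space $R_G(\lambda)^{(B_H)}_\mu$ is identified with the space of sections of $L(\lambda)$ that are $B_H$-eigenvectors of weight~$\mu$; since $X_I$ is $H$-spherical this space is at most one-dimensional (Theorem~\ref{thm_criterion_spherical}), and it is nonzero precisely when $(\lambda;\mu)\in\Gamma_I(G,H)$. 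Fixing a point $x_0$ in the open $B_H$-orbit and a trivialization of $L(\lambda)$ near~$x_0$, each such eigensection becomes a $B_H$-eigenfunction in $\FF(X_I)^{(B_H)}$, well-defined up to scalar, whose divisor is an effective combination of the divisors in $\mathcal D$ (it can have no other components, being $B_H$-semi-invariant and regular on the open orbit). This sets up a map $\Gamma_I(G,H)\to\ZZ^{\mathcal D}_{\ge 0}$, $(\lambda;\mu)\mapsto(\operatorname{ord}_D)_{D\in\mathcal D}$.

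The key steps, in order, are: (i) show this map is a well-defined injective monoid homomorphism. Injectivity is the crucial point: if two pairs have the same divisor, their ratio is an everywhere-regular invertible function on $X_I$ that is a $B_H$-eigenfunction, hence (as $X_I$ is complete and irreducible) a nonzero constant times a character; comparing $B_H$-weights and $G$-highest-weights forces the two pairs to coincide. One must also check additivity, which follows because the product of two such eigensections, under the product trivialization, is the eigensection attached to the sum of the pairs, and order of vanishing is additive. (ii) Show the image is a \emph{finitely generated full-dimensional} submonoid whose rank is $|\mathcal D|$: indeed the image spans $\ZZ^{\mathcal D}$ over $\QQ$ because, choosing for each $D\in\mathcal D$ a $B_H$-eigenfunction in $\FF(X_I)$ whose divisor involves $D$, one can (after multiplying by a suitable power of a pluri-anticanonical-type section, i.e. a high power of an ample $L(\lambda_I)$ with $\lambda_I=\sum_{i\in I}\pi_i$, which exists since $X_I$ is projective) arrange it to become a genuine eigensection of some $L(\lambda)$, $\lambda\in\Lambda^+_I(G)$; this realizes arbitrarily many linearly independent divisor-vectors, so $\rk\Gamma_I(G,H)=|\mathcal D|$. (iii) Prove freeness: the image of $\Gamma_I(G,H)$ is exactly the set of $v\in\ZZ^{\mathcal D}_{\ge 0}$ that lie in the image sublattice $\Lambda\subset\ZZ^{\mathcal D}$ together with possibly a finite-index condition; but in fact I would argue that the natural map $\Gamma_I(G,H)\to\ZZ^{\mathcal D}_{\ge 0}$ is \emph{surjective onto $\Lambda\cap\ZZ^{\mathcal D}_{\ge 0}$} after passing to the correct lattice, hence $\Gamma_I(G,H)$ is isomorphic to $\ZZ^{\mathcal D}_{\ge 0}$ itself. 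The cleanest route: show directly that for each $D\in\mathcal D$ there is an indecomposable element of $\Gamma_I(G,H)$ mapping to the basis vector $e_D$, which gives both a free generating set and the surjectivity, with the same injectivity argument from~(i) showing no relations.

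For step (iii) the decisive tool is the theory of the colored cone / the fact that on a complete spherical variety the $B$-stable prime divisors, together with the sections of line bundles, control the whole monoid of sections; concretely one invokes that $X_I$ spherical and complete implies $\FF(X_I)^{B_H}=\FF$ (the field of $B_H$-invariant rational functions is trivial, since $B_H$ has a dense orbit), so that a $B_H$-eigenfunction is determined by its divisor up to scalar—this is precisely the injectivity—and that the monoid of effective $B_H$-semi-invariant divisors supported on $\mathcal D$, intersected with those that "come from" $L(\lambda)$ for $\lambda\in\Lambda^+_I(G)$, is saturated because $L(\lambda_I)$ is very ample and one can clear denominators. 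Assembling these, $\Gamma_I(G,H)\cong\{v\in\ZZ^{\mathcal D}_{\ge 0}\}$, which is free of rank~$|\mathcal D|$.

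The main obstacle I anticipate is step~(iii): proving surjectivity onto the whole nonnegative orthant (equivalently, that the indecomposable elements are in bijection with $\mathcal D$), rather than merely embedding $\Gamma_I(G,H)$ as a finite-index submonoid of $\ZZ^{\mathcal D}_{\ge 0}$. This requires a genuine input from spherical geometry—one must know that every effective $B_H$-stable divisor on $X_I$ supported on $\mathcal D$ is the divisor of a section of some $L(\lambda)$ with $\lambda\in\Lambda^+_I(G)$, which uses both the surjectivity of $\mathfrak X(P_I^-)\to\Pic_G X_I$ recorded in~\S\,\ref{subsec_hlb} and the projectivity of~$X_I$ (to clear poles by a power of an ample bundle) together with the fact that on a spherical variety $\Pic_G X_I$ already sees all $B_H$-divisor classes. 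If instead one only wants "free of rank $|\mathcal D|$" without pinning down the generators, it suffices to combine injectivity from~(i) with the already-cited general fact (\S\,\ref{subsec_spherical_modules}, e.g.~\cite[Theorem~3.2]{Kn} applied in spirit) that such section monoids of spherical varieties are free, plus the rank count from~(ii); I would present the argument in this streamlined form and remark that the free generators correspond to the elements of $\mathcal D$.
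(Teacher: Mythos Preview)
Your approach is essentially the same as the paper's: define a map $\varphi\colon\Gamma_I(G,H)\to\ZZ^+\mathcal D$ sending $(\lambda;\mu)$ to the divisor of zeros of the (unique up to scalar) $B_H$-eigensection of weight~$\mu$ in $H^0(X_I,L(\lambda))$, check it is an injective monoid homomorphism, and then argue it is surjective. Your step~(i) is fine and matches the paper.

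The difference, and the weakness in your write-up, is step~(iii). You correctly flag surjectivity as the crux, but your proposed route---clearing poles by powers of an ample bundle, spanning arguments, or falling back on a general freeness theorem for spherical section monoids---is both roundabout and, in the last case, essentially circular. The paper's argument is direct and short: given any $D\in\ZZ^+\mathcal D$, the associated line bundle $\mathcal O(D)$ admits a \emph{unique} $G$-linearization because $G$ is simply connected (Popov~\cite{Pop}), hence $\mathcal O(D)\simeq L(\lambda)$ for some $\lambda\in\mathfrak X(P_I^-)$; since $H^0(X_I,L(\lambda))\ne 0$ one has $\lambda\in\Lambda^+_I(G)$ by~(\ref{eqn_H^0_for_XI}); and the canonical section $s_D$ is automatically $B_H$-semi-invariant of some weight~$\mu$ because $D$ is $B_H$-stable. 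Thus $D=\varphi(\lambda;\mu)$, and $\varphi$ is an isomorphism onto the free monoid $\ZZ^+\mathcal D$. No ample-bundle trick, no rank computation, no appeal to external freeness results is needed---the simple connectedness of $G$ does all the work. You mention the isomorphism $\mathfrak X(P_I^-)\simeq\Pic_G X_I$ from \S\,\ref{subsec_hlb}, but the point you are missing is that the forgetful map $\Pic_G X_I\to\Pic X_I$ is also an isomorphism here, so \emph{every} line bundle arises as some $L(\lambda)$.
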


\begin{proof}
Let $\ZZ^+ \mathcal D$ be the monoid of nonnegative integer linear combinations of elements in~$\mathcal D$ and consider the map $\varphi \colon \Gamma_I(G,H) \to \ZZ^+ \mathcal D$ sending a pair $(\lambda; \mu)$ to the divisor of zeros of a (unique up to proportionality) $B_H$-semi-invariant section in $H^0(X_I, L(\lambda)) \simeq R_G(\lambda)$ of weight~$\mu$. It is easy to see that $\varphi$ is an injective monoid homomorphism. Now consider an arbitrary divisor $D \in \ZZ^+ \mathcal D$. Being an effective Cartier divisor, $D$ determines a line bundle $L$ on $X_I$ together with a (unique up to proportionality) section $s_D \in H^0(X_I, L)$ such that $D$ is the divisor of zeros of~$s_D$. As $G$ is simply connected, $L$ admits a unique structure of a homogeneous line bundle (see~\cite[Proposition~1 and Theorem~4]{Pop}), so that $L \simeq L(\lambda)$ for some $\lambda \in \mathfrak X(P_I^-)$. By~(\ref{eqn_H^0_for_XI}) the condition $H^0(X,L(\lambda)) \ne 0$ implies $\lambda \in \Lambda^+_I(G)$. Since the divisor $D$ is $B_H$-stable, it follows that the section $s_D$ is $B_H$-semi-invariant of some weight~$\mu \in \Lambda^+(H)$. Consequently, $D$ is the image of the pair $(\lambda; \mu)$, and we have proved the surjectivity of~$\varphi$. Thus $\varphi$ is an isomorphism.
\end{proof}

\begin{remark}
Theorem~\ref{thm_monoid_is_free} is a particular case of a general result on Cox rings of spherical varieties, see \cite[Proposition~4.2.3]{Bri} or \cite[Theorem~5.4.6(i)]{ADHL}.
\end{remark}

\subsection{Formulas for the rank of restricted branching monoids}

\begin{proposition} \label{prop_rank_of_Gamma}
Under the conditions of Theorem~\textup{\ref{thm_criterion_spherical}}, $\rk \Gamma_I(G,H) = |I| + \rk_H X_I$.
\end{proposition}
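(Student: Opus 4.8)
The plan is to combine Theorem~\ref{thm_monoid_is_free}, which identifies $\rk \Gamma_I(G,H)$ with $|\mathcal D|$, with a standard count of the $B_H$-stable prime divisors on $X_I$. First I would split $\mathcal D$ into two parts: the divisors contained in the complement of the open $B_H$-orbit $O \subset X_I$ (call these the \emph{boundary divisors}), and the divisors whose generic point lies in $O$, i.e.\ the $B_H$-stable prime divisors of $O$ itself. Since $O$ is a single $B_H$-orbit, it has no $B_H$-stable prime divisors, so every element of $\mathcal D$ is a boundary divisor; thus $|\mathcal D|$ equals the number of irreducible components of $X_I \setminus O$ that have codimension~$1$. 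At this point the statement $\rk \Gamma_I(G,H) = |I| + \rk_H X_I$ becomes the purely geometric assertion that the number of codimension-one components of the complement of the open $B_H$-orbit equals $|I| + \rk_H X_I$.

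The second (and main) step is to prove that geometric identity. The natural tool is the well-known exact sequence relating the Picard group of $X_I$ to that of the open orbit $O$, namely
\[
0 \longrightarrow \bigoplus_{D \in \mathcal D} \ZZ D \longrightarrow \Pic_{B_H} X_I \longrightarrow \Pic_{B_H} O \longrightarrow 0,
\]
or rather its $G$-equivariant counterpart: since $G$ is simply connected, $\Pic_G X_I \simeq \mathfrak X(P_I^-) \simeq \ZZ\Lambda^+_I(G)$, which is free of rank $|I|$ (using $\Pic X_I \simeq \Pic_G X_I$ from Popov's theorem as cited in~\S\,\ref{subsec_hlb} and the identification of $\mathfrak X(P_I^-)$ recorded in~\S\,\ref{sect_N&C}). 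On the other hand, the relevant "Picard group" of the open $B_H$-orbit $O$ is computed by the character group of the $B_H$-stabilizer of a point, and its rank is exactly $\rk_H X_I$: indeed, by definition $\Lambda_{X_I}$ is the lattice of $B_H$-weights occurring in $\FF(X_I)^{(B_H)}$, and for a spherical variety this lattice has rank $\rk_H X_I$; the quotient of $\mathfrak X(B_H)$ by $\Lambda_{X_I}$ is the character lattice of the generic stabilizer, and a short diagram chase with the divisor sequence yields $|\mathcal D| = |I| + \rk_H X_I$. Concretely, one writes the divisor of every $B_H$-semi-invariant rational function $f \in \FF(X_I)^{(B_H)}_\lambda$ as a $\ZZ$-combination of the $D \in \mathcal D$; the map $\lambda \mapsto \operatorname{div} f$ embeds $\Lambda_{X_I}$ into $\ZZ\mathcal D$, and the cokernel of this embedding is identified — via the divisor-of-zeros map of Theorem~\ref{thm_monoid_is_free} together with~(\ref{eqn_H^0_for_XI}) — with $\mathfrak X(P_I^-) \simeq \ZZ\Lambda^+_I(G)$, which has rank $|I|$. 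Hence $|\mathcal D| = \rk \Lambda_{X_I} + |I| = \rk_H X_I + |I|$.

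I expect the main obstacle to be bookkeeping the three lattices cleanly: one must be careful that the "$\operatorname{div}$" map on $B_H$-semi-invariant functions is well-defined modulo the $G$-equivariant part (equivalently, that a rational function with $B_H$-invariant principal divisor is a scalar, which holds because $X_I$ is complete and rational), and that the resulting short exact sequence
\[
0 \longrightarrow \Lambda_{X_I} \longrightarrow \ZZ\mathcal D \longrightarrow \mathfrak X(P_I^-) \longrightarrow 0
\]
is exact on both ends — surjectivity on the right is essentially the content of the surjectivity argument already given in the proof of Theorem~\ref{thm_monoid_is_free}, while injectivity on the left is the statement that a nonconstant $B_H$-semi-invariant rational function cannot have trivial divisor. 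Once this sequence is in place, taking ranks gives the claim immediately, so the only real work is setting it up correctly; alternatively, one may cite the general Cox-ring description of spherical varieties (as in the Remark following Theorem~\ref{thm_monoid_is_free}), from which $|\mathcal D| = \rk \Pic X_I + \rk_H X_I$ follows directly, and then use $\rk \Pic X_I = |I|$.
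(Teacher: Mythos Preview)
Your proposal is correct and follows essentially the same approach as the paper: establish the short exact sequence $0 \to \Lambda_{X_I} \to \ZZ\mathcal D \to \Pic X_I \to 0$ (the paper writes the third term as $\Pic X_I$ rather than $\mathfrak X(P_I^-)$, but these are identified) and take ranks, then invoke Theorem~\ref{thm_monoid_is_free} and $\rk \Pic X_I = |I|$. Your first paragraph on boundary versus non-boundary divisors is an unnecessary detour---the paper goes straight to the exact sequence---but the core argument in your second and third paragraphs matches the paper's proof exactly.
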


\begin{proof}
Let $\Lambda$ be the weight lattice of~$X_I$ as an $H$-spherical variety and let $\ZZ \mathcal D$ denote the free Abelian group generated by~$\mathcal D$. There is an injective map $\Lambda \to \ZZ \mathcal D$ sending an element $\lambda$ to the divisor of a (unique up to proportionality) $B_H$-semi-invariant rational function on $X_I$ of weight~$\lambda$. Further, since $\Pic X_I$ is freely generated by the images of all $B_G$-stable prime divisors, there is a surjective map $\ZZ \mathcal D \to \Pic X_I$. Clearly, the composite map $\Lambda \to \ZZ \mathcal D \to \Pic X_I$ is zero and every element of $\ZZ \mathcal D$ with zero image in $\Pic X_I$ is the divisor of zeros of a $B_H$-semi-invariant rational function on~$X_I$, which yields an exact sequence
\[
0 \to \Lambda \to \ZZ \mathcal D \to \Pic X_I \to 0.
\]
Consequently, $|\mathcal D| = \rk \Pic X_I + \rk_H X_I$. As $\rk \Pic X_I = |I|$, the claim is implied by Theorem~\ref{thm_monoid_is_free}.
\end{proof}

In order to compute the rank of an $H$-spherical variety~$X_I$, we shall need the result of Panyushev stated in Theorem~\ref{thm_Panyushev} below.

Suppose that a connected reductive algebraic group $K$ acts on a smooth irreducible variety~$X$ and $Y \subset X$ is a smooth $K$-stable locally closed subvariety. Then one can consider the normal bundle $N_{X/Y}$ and the conormal bundle $N^\vee_{X/Y}$ of $Y$ in~$X$, which are $K$-varieties in a natural way; see~\cite[\S\,2]{Pan} for details. The next theorem is a particular case of~\cite[Corollary~2.4]{Pan}.

\begin{theorem} \label{thm_Panyushev}
The following conditions are equivalent:
\begin{enumerate}[label=\textup{(\arabic*)},ref=\textup{\arabic*}]
\item
$X$ is $K$-spherical.

\item
$N_{X/Y}$ is $K$-spherical.

\item
$N^\vee_{X/Y}$ is $K$-spherical.
\end{enumerate}
Moreover, under the above three conditions one has $\rk_K X = \rk_K N_{X/Y} = \rk_K N^\vee_{X/Y}$.
\end{theorem}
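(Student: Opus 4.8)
The plan is to deduce the theorem directly from \cite[Corollary~2.4]{Pan}, which is its natural quantitative refinement. For an irreducible $K$-variety $Z$ write $c_K(Z) = \dim Z - \max_{z \in Z} \dim(B_K \cdot z)$ for its \emph{complexity}, so that $Z$ is $K$-spherical precisely when $c_K(Z) = 0$. Then \cite[Corollary~2.4]{Pan} asserts
\[
c_K(X) = c_K(N_{X/Y}) = c_K(N^\vee_{X/Y}), \qquad \rk_K X = \rk_K N_{X/Y} = \rk_K N^\vee_{X/Y}.
\]
Granting this, the equivalence of conditions~(1)--(3) is obtained by specializing the first chain of equalities to $c_K = 0$, and the asserted equality of ranks is exactly the second chain. (Here one should assume $Y$ irreducible, as it will be in all our applications where $Y$ is a single orbit, so that $N_{X/Y}$ and $N^\vee_{X/Y}$ are irreducible and $c_K$, $\rk_K$ are defined for them; in general one argues connected component by connected component of~$Y$.)

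For completeness I recall the mechanism behind \cite[Corollary~2.4]{Pan}. One first shrinks $X$ to a $K$-stable open subset $U \supset Y$ in which $Y$ is closed: since $Y$ is open in $\overline{Y}$, the set $\overline{Y} \setminus Y$ is closed and $K$-stable in~$X$, so $U = X \setminus (\overline{Y} \setminus Y)$ works; this changes neither $N_{X/Y}$ nor $N^\vee_{X/Y}$, and replacing an irreducible $K$-variety by a nonempty $K$-stable open subset changes neither its complexity nor its rank, both being invariants of $\FF(X)^{B_K}$ together with its grading by $\mathfrak X(T_K)$. Next one forms the deformation to the normal cone of $Y$ in~$X$: a flat morphism $\mathcal X \to \mathbb{A}^1$ whose total space carries an action of~$K$ (trivial on the base) together with a commuting $\mathbb{G}_m$-action covering the standard one on $\mathbb{A}^1$, such that the fibre over each $t \ne 0$ is $K$-isomorphic to~$X$ while the fibre over~$0$ is $K$-isomorphic to the total space of $N_{X/Y}$. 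Upper semicontinuity of complexity (and of rank) in flat families gives $c_K(N_{X/Y}) \ge c_K(X)$ and $\rk_K N_{X/Y} \ge \rk_K X$, while the additional $\mathbb{G}_m$-action forces every $B_K$-semi-invariant rational function on the special fibre to extend over the total space, hence to restrict from the generic fibre, yielding the reverse inequalities. For the conormal bundle one argues in the same spirit, using the fact --- established by a parallel deformation argument in \cite[\S\,2]{Pan} --- that a $K$-equivariant vector bundle $E$ over a $K$-variety satisfies $c_K(E) = c_K(E^\vee)$ and $\rk_K E = \rk_K E^\vee$, and applying this to $E = N_{X/Y}$.

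The main obstacle is the ``$\le$'' half of the equalities: semicontinuity in flat families only prevents complexity and rank from dropping in the degeneration, and to prevent them from strictly increasing one genuinely needs the conical, $\mathbb{G}_m$-equivariant structure of the deformation to the normal cone. The duality symmetry $c_K(E) = c_K(E^\vee)$ for equivariant vector bundles, which powers the conormal statement, is itself a nontrivial instance of the same circle of ideas. As all of this is carried out in full in~\cite{Pan}, we do not reproduce it and simply invoke \cite[Corollary~2.4]{Pan}.
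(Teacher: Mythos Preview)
Your proposal is correct and takes essentially the same approach as the paper: the paper does not prove this theorem at all but simply declares it to be ``a particular case of~\cite[Corollary~2.4]{Pan}'', which is exactly what you invoke. Your additional sketch of the deformation-to-the-normal-cone mechanism is extra exposition not present in the paper.
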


Here is a useful consequence of the above theorem.

\begin{proposition} \label{prop_sphericity_criterion}
Suppose that $X$ is complete, $Y \subset X$ is a closed $K$-orbit, $y \in Y$, and $M$ is a Levi subgroup of~$K_y$. Then the following conditions are equivalent:
\begin{enumerate}[label=\textup{(\arabic*)},ref=\textup{\arabic*}]
\item \label{X_spherical_1}
$X$ is a $K$-spherical variety.

\item \label{X_spherical_2}
$T_y X / T_y Y$ is a spherical $M$-module.
\end{enumerate}
Moreover, under the above two conditions one has $\rk_K X = \rk_M (T_y X / T_y Y)$.
\end{proposition}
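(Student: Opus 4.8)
The plan is to deduce Proposition~\ref{prop_sphericity_criterion} from Theorem~\ref{thm_Panyushev} by applying the latter to the conormal bundle $N^\vee_{X/Y}$ and then identifying this conormal bundle, as a $K$-variety, with a bundle induced from the $M$-module $T_y X / T_y Y$. First I would recall that since $Y$ is a closed $K$-orbit, $Y \simeq K/K_y$, and the conormal bundle $N^\vee_{X/Y}$ is the $K$-variety $K \times^{K_y} (T_yX/T_yY)^*$, the fiber over the base point $y$ being the cotangent space $(T_yX/T_yY)^*$ with its natural linear $K_y$-action. Thus $N^\vee_{X/Y}$ is a homogeneous vector bundle over $K/K_y$, so by Theorem~\ref{thm_Panyushev} the variety $X$ is $K$-spherical if and only if $K \times^{K_y} (T_yX/T_yY)^*$ is $K$-spherical, and the ranks agree.

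Next I would reduce the sphericity and rank of the induced bundle $K \times^{K_y} W^*$, where $W = T_yX/T_yY$, to a statement about the $M$-module $W^*$ (equivalently $W$, since a module and its dual are $M$-spherical simultaneously with equal rank). Write $K_y = M \cdot K_y^u$ (Levi decomposition). The key observations are: (i) the unipotent radical $K^u_y$ acts trivially on the \emph{associated graded} module, but more to the point, a homogeneous bundle $K\times^{K_y} W^*$ over $K/K_y$ is $K$-spherical if and only if $W^*$ is a spherical $M$-module, and the $K$-rank of the bundle equals the $M$-rank of $W^*$. This is a standard fact — it follows, for instance, from the local structure theorem for the open $B_K$-orbit, or can be derived again from Theorem~\ref{thm_Panyushev} by contracting the bundle onto its zero section $K/K_y$ and using that the $K_y$-action on $W^*$ factors appropriately; alternatively one uses that $B_K$-orbits on $K\times^{K_y}W^*$ containing a dense orbit correspond, via the projection to $K/K_y \simeq K/P$ after replacing $K_y$ by a parabolic containing it with Levi $M$, to $B_M$-orbits on $W^*$. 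Combining this with the previous paragraph gives: $X$ is $K$-spherical $\iff$ $W^* = (T_yX/T_yY)^*$ is a spherical $M$-module $\iff$ $T_yX/T_yY$ is a spherical $M$-module, and in that case $\rk_K X = \rk_K N^\vee_{X/Y} = \rk_M W^* = \rk_M(T_yX/T_yY)$.

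I expect the main obstacle to be the passage from ``$N^\vee_{X/Y}$ is $K$-spherical'' to ``$T_yX/T_yY$ is a spherical $M$-module'', i.e.\ stripping off the unipotent radical of $K_y$ and the ``extra'' $K/P$-factor in a way that both preserves sphericity and tracks the rank. The cleanest route is probably to enlarge $K_y$ to a parabolic $Q$ of $K$ with Levi $M$ (possible since $M$ is a Levi subgroup of $K_y$ and $K_y$ is contained in some parabolic with the same Levi — here one may need $K_y$ itself to be "nice enough"; in the intended applications $X$ is a flag variety and $Y$ a closed orbit, so $K_y$ is an intersection of $K$ with a parabolic of $G$ and this works), observe $K/K_y \to K/Q$ is a fiber bundle with affine — indeed unipotent-homogeneous — fibers, and push everything down: $N^\vee_{X/Y} \to K/Q$ has fibers that are $Q$-modules on which $Q^u$ acts suitably, and $K$-sphericity plus $K$-rank of such a $Q$-vector-bundle reduces to $M$-sphericity plus $M$-rank of the fiber by a second application of Theorem~\ref{thm_Panyushev} (contracting to the zero section) together with the elementary fact that a $K$-module induced from a trivial-on-$Q^u$ $Q$-module $U$, i.e.\ $K\times^Q U$, is $K$-spherical iff $U$ is $M$-spherical, with matching ranks. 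The bookkeeping of which group acts on which fiber, and confirming $Q^u$ indeed acts trivially on the relevant quotient $T_yX/T_yY$ (which holds because this quotient is the fiber of a genuine $K_y$-module and $K_y^u$, being unipotent in a representation, can be arranged to act trivially on it after passing to the semisimplification — and for sphericity only the semisimplification matters), is the delicate part; everything else is a direct invocation of Panyushev's theorem.
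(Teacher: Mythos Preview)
Your strategy matches the paper's: apply Panyushev's theorem to the normal (or conormal) bundle, then reduce sphericity and rank of the homogeneous bundle $K \times^{K_y} V$ over $Y$ to those of the $M$-module $V = T_yX/T_yY$ on the fiber. However, you miss the observation that makes this reduction clean: since $X$ is complete and $Y \subset X$ is closed, $Y$ is itself complete, hence $K_y$ is \emph{already a parabolic subgroup of~$K$}. There is no need to ``enlarge $K_y$ to a parabolic~$Q$'', and your hedging about whether $K_y$ is ``nice enough'' is unnecessary.

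With $K_y$ parabolic, the paper's argument is short and avoids the bookkeeping you anticipate. Choose $B_K$ so that $B_K \cap K_y = B_K \cap M$ is a Borel of~$M$; then $B_K$ has an open orbit $O$ in $Y = K/K_y$, and $B_K^u$ acts transitively on~$O$. Restricting the projection $\varphi \colon N_{X/Y} \to Y$ to $\varphi^{-1}(O)$, one sees directly that $B_K$ has an open orbit in $N_{X/Y}$ if and only if $B_K \cap M$ has an open orbit in the fiber~$V$, and that $B_K$-semi-invariant rational functions on $N_{X/Y}$ restrict bijectively to $(B_K \cap M)$-semi-invariants on~$V$. This handles both the equivalence and the rank equality at once. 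Note also that your claim ``$K_y^u$ acts trivially on $T_yX/T_yY$'' is false in general, and ``for sphericity only the semisimplification matters'' is not a valid principle; what is actually true, and what the direct argument makes transparent, is that only the action of $B_K \cap K_y = B_K \cap M \subset M$ on the fiber is relevant.
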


\begin{proof}
By Theorem~\ref{thm_Panyushev}, condition~(\ref{X_spherical_1}) holds if and only if the normal bundle $N_{X/Y}$ is $K$-spherical.
Let $\varphi \colon N_{X/Y} \to Y$ be the natural $K$-equivariant projection and consider the $M$-module~$V = \varphi^{-1}(y) = T_y X / T_y Y$.
As $Y$ is a closed $K$-orbit in~$X$, it follows that $K_y$ is a parabolic subgroup of~$K$.
Without loss of generality we may assume that the Borel subgroup $B_K$ is chosen in such a way that $K_y \cap B_K = M \cap B_K$ is a Borel subgroup of~$M$.
In this case, $B_K$ has an open orbit~$O$ in~$Y$.
Restricting $\varphi$ to the open $B_K$-stable subset $\varphi^{-1}(O)$ we easily see that the existence of an open $B_K$-orbit in $N_{X/Y}$ is equivalent to the existence of an open $(M \cap B_K)$-orbit in~$V$, hence $N_{X/Y}$ being $K$-spherical is equivalent to~(\ref{X_spherical_2}).

Now suppose that conditions~(\ref{X_spherical_1}) and~(\ref{X_spherical_2}) hold. Then $\rk_K X = \rk_K N_{X/Y}$ by Theorem~\ref{thm_Panyushev}. As the group $B_K^u$ acts transitively on~$O$, $B_K$-semi-invariant rational functions on~$N_{X/Y}$ restrict bijectively to $(M \cap B_K)$-semi-invariant rational functions on~$V$. The latter yields $\rk_K N_{X/Y} = \rk_M V$ as required.
\end{proof}

\begin{corollary}[{compare with~\cite[Theorem~4.2]{HNOO}}] \label{crl_sphericity_criterion}
Suppose that $X$ is a flag variety for~$G$, $Y \subset X$ is a closed $H$-orbit, and $y \in Y$. Put $P = G_y$ and  let $M$ be a Levi subgroup of~$H_y$. Then the following conditions are equivalent:
\begin{enumerate}[label=\textup{(\arabic*)},ref=\textup{\arabic*}]
\item
$X$ is an $H$-spherical variety.

\item
$\mathfrak g / (\mathfrak p + \mathfrak h)$ is an $M$-spherical module.
\end{enumerate}
Moreover, under the above two conditions one has $\rk_H X = \rk_M (\mathfrak g / (\mathfrak p + \mathfrak h))$.
\end{corollary}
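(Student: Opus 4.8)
The plan is to derive Corollary~\ref{crl_sphericity_criterion} from Proposition~\ref{prop_sphericity_criterion} by specializing $K = H$ and $X = X_I$ (a flag variety for~$G$), and by computing the $M$-module $T_y X / T_y Y$ explicitly. First I would observe that a flag variety $X$ of~$G$ is a smooth complete irreducible variety, and that every $H$-orbit in~$X$ has a closed $H$-orbit in its closure; so the hypotheses of Proposition~\ref{prop_sphericity_criterion} are met once we fix a closed $H$-orbit $Y \subset X$ and a point $y \in Y$. Writing $P = G_y$ (a parabolic subgroup of~$G$, since $X$ is a flag variety) and $H_y = H \cap P$, with $M$ a Levi subgroup of~$H_y$, Proposition~\ref{prop_sphericity_criterion} immediately gives the equivalence of ``$X$ is $H$-spherical'' with ``$T_y X / T_y Y$ is a spherical $M$-module'', together with the rank equality $\rk_H X = \rk_M(T_y X / T_y Y)$. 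It therefore remains only to identify the $M$-module $T_y X / T_y Y$ with $\mathfrak g / (\mathfrak p + \mathfrak h)$.

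For this identification I would use the standard description of tangent spaces to orbits: since $X = G/P$ and $y$ has stabilizer $P$, there is a canonical $P$-equivariant (hence $M$-equivariant, as $M \subset H_y \subset P$) isomorphism $T_y X \simeq \mathfrak g / \mathfrak p$. Likewise, as $Y = H/H_y = H\cdot y$ is the $H$-orbit of~$y$, its tangent space is $T_y Y \simeq \mathfrak h / \mathfrak h_y = \mathfrak h/(\mathfrak h \cap \mathfrak p)$, and under the inclusion $T_y Y \hookrightarrow T_y X$ this corresponds to the image of~$\mathfrak h$ in $\mathfrak g/\mathfrak p$, namely $(\mathfrak h + \mathfrak p)/\mathfrak p$. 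Quotienting, we get an $M$-module isomorphism
\[
T_y X / T_y Y \;\simeq\; (\mathfrak g/\mathfrak p)\big/\big((\mathfrak h + \mathfrak p)/\mathfrak p\big) \;\simeq\; \mathfrak g/(\mathfrak p + \mathfrak h),
\]
where all the maps involved are $M$-equivariant because $M$ normalizes $\mathfrak p$ and $\mathfrak h$ (indeed $M \subset H \cap P$). Substituting this into the statement of Proposition~\ref{prop_sphericity_criterion} yields exactly the asserted equivalence and the rank formula $\rk_H X = \rk_M(\mathfrak g/(\mathfrak p + \mathfrak h))$.

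The only genuinely nontrivial point is the invocation of Proposition~\ref{prop_sphericity_criterion}, which already requires choosing the Borel subgroup $B_H$ compatibly with the parabolic $H_y$ so that $H_y \cap B_H$ is a Borel subgroup of~$M$; but since sphericity of a variety and the rank of a spherical variety do not depend on the choice of Borel subgroup, this causes no loss of generality. Thus the proof is essentially a bookkeeping exercise: it amounts to unwinding the tangent-space computations above and checking $M$-equivariance at each step. I do not expect any real obstacle; the parenthetical comparison with \cite[Theorem~4.2]{HNOO} signals that this is a reformulation tailored to our setup rather than a new result.
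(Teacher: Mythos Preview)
Your proposal is correct and follows exactly the paper's approach: apply Proposition~\ref{prop_sphericity_criterion} and identify $T_yX/T_yY$ with $\mathfrak g/(\mathfrak p + \mathfrak h)$ via the chain of $M$-equivariant isomorphisms $T_yX/T_yY \simeq (\mathfrak g/\mathfrak p)/(\mathfrak h/(\mathfrak h\cap\mathfrak p)) \simeq \mathfrak g/(\mathfrak p+\mathfrak h)$. The paper's proof is a one-line invocation of precisely this isomorphism; your version simply spells out the tangent-space identifications and the $M$-equivariance in more detail.
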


\begin{proof}
This follows directly from Proposition~\ref{prop_sphericity_criterion} in view of the $M$-module isomorphism $T_y X / T_y Y \simeq (\mathfrak g/ \mathfrak p) / (\mathfrak h / (\mathfrak h \cap \mathfrak p)) \simeq \mathfrak g / (\mathfrak p + \mathfrak h)$.
\end{proof}

It is well known that, under an appropriate choice of $H$ within its conjugacy class in~$G$, one can achieve the inclusion $B_H^- \subset B_G^-$. In this situation, we obtain the following refinement of Proposition~\ref{prop_rank_of_Gamma}, which will be extensively used in our paper.

\begin{corollary} \label{crl_rank_of_Gamma_refined}
Under the conditions of Theorem~\textup{\ref{thm_criterion_spherical}}, suppose in addition that $B_H^- \subset B_G^-$ and let $M$ be a Levi subgroup of~$P_I^- \cap H$. Then $\mathfrak g / (\mathfrak p_I^- + \mathfrak h)$ is a spherical $M$-module and
\[
\rk \Gamma_I(G,H) = |I| + \rk_M (\mathfrak g / (\mathfrak p_I^- + \mathfrak h)).
\]
\end{corollary}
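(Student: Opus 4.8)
The plan is to deduce Corollary~\ref{crl_rank_of_Gamma_refined} by combining Proposition~\ref{prop_rank_of_Gamma} with Corollary~\ref{crl_sphericity_criterion}, the only task being to unwind how the hypothesis $B_H^- \subset B_G^-$ lets us identify the abstract data $(P, M)$ appearing in the latter with the concrete data $(P_I^-, M)$ of the present statement. First I would choose the base point $y \in X_I = G/P_I^-$ to be the image of the identity coset, so that $G_y = P_I^-$; thus the role of $P$ in Corollary~\ref{crl_sphericity_criterion} is played by $P_I^-$ and the module $\mathfrak g/(\mathfrak p + \mathfrak h)$ becomes $\mathfrak g/(\mathfrak p_I^- + \mathfrak h)$.

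The key point is to check that with this choice of $y$ the orbit $Y = H \cdot y$ is \emph{closed} in $X_I$, so that Corollary~\ref{crl_sphericity_criterion} genuinely applies, and that $H_y = P_I^- \cap H$. The identity $H_y = G_y \cap H = P_I^- \cap H$ is immediate. For closedness I would argue as follows: under the assumption $B_H^- \subset B_G^-$ we have $B_H^- \subset B_G^- \subset P_I^-$, hence $B_H^- \subset H_y$; since $B_H^-$ is a Borel subgroup of $H$, the stabilizer $H_y$ contains a Borel subgroup of $H$, so $H_y$ is a parabolic subgroup of $H$, and therefore $Y = H/H_y$ is complete. Being a complete subvariety of the complete (in fact projective) variety $X_I$, $Y$ is closed. (Alternatively, $Y$ complete plus $Y$ a single $H$-orbit forces $Y$ to be closed, since the boundary of any orbit is a union of orbits of strictly smaller dimension.) Consequently the hypotheses of Corollary~\ref{crl_sphericity_criterion} are met with $X = X_I$, $P = P_I^-$, and $M$ a Levi subgroup of $H_y = P_I^- \cap H$, which is exactly the $M$ in the statement.

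With that in place the proof is a two-line assembly. Corollary~\ref{crl_sphericity_criterion} (applicable since $X_I$ is $H$-spherical by the hypotheses of Theorem~\ref{thm_criterion_spherical}) tells us that $\mathfrak g/(\mathfrak p_I^- + \mathfrak h)$ is a spherical $M$-module and that $\rk_H X_I = \rk_M(\mathfrak g/(\mathfrak p_I^- + \mathfrak h))$. Substituting this into the formula $\rk \Gamma_I(G,H) = |I| + \rk_H X_I$ of Proposition~\ref{prop_rank_of_Gamma} yields
\[
\rk \Gamma_I(G,H) = |I| + \rk_M(\mathfrak g/(\mathfrak p_I^- + \mathfrak h)),
\]
as claimed.

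The only mild subtlety — and the step I would be most careful about — is the compatibility of Borel subgroups: Corollary~\ref{crl_sphericity_criterion} is stated for an \emph{arbitrary} choice, but the proof of the underlying Proposition~\ref{prop_sphericity_criterion} quietly replaces $B_K$ by one for which $K_y \cap B_K$ is a Borel subgroup of $M$. Here the condition $B_H^- \subset B_G^- \subset P_I^-$ guarantees precisely that $H_y \cap B_H^- = B_H^-$ contains, hence equals after intersecting with a suitable Levi, a Borel subgroup of $M$; so the fixed Borel of $H$ is already adapted and no further adjustment of $H$ within its conjugacy class is needed. I would spell this out in a sentence to make clear that the rank formula is being applied with the \emph{given} $M$ and not some conjugate. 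Everything else is a direct citation of the two preceding results.
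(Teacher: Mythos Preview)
Your proposal is correct and follows exactly the same route as the paper's proof: pick $y = eP_I^-$, use $B_H^- \subset B_G^- \subset P_I^-$ to see that $H_y = P_I^- \cap H$ is parabolic in $H$ so that $Y = Hy$ is closed, then invoke Corollary~\ref{crl_sphericity_criterion} and Proposition~\ref{prop_rank_of_Gamma}. Your final paragraph about Borel compatibility is more caution than necessary---the statements of Proposition~\ref{prop_sphericity_criterion} and Corollary~\ref{crl_sphericity_criterion} are for an arbitrary Levi subgroup $M$ of the stabilizer, and $\rk_M V$ is independent of the chosen Borel of~$M$---but it does no harm.
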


\begin{proof}
Since $B_H^- \subset B_G^-$, it follows that $Q = P_I^- \cap H$ is a parabolic subgroup of~$H$. Clearly, $Q$ is the stabilizer in~$H$ of the point $y = eP_I^- \in X_I$, hence $Y = Hy$ is a closed $H$-orbit in~$X_I$. Now the required result follows from Corollary~\ref{crl_sphericity_criterion} and Proposition~\ref{prop_rank_of_Gamma}.
\end{proof}

\subsection{Indecomposable elements of restricted branching monoids}

The statements in this subsection turn out to be enough to determine all indecomposable elements of the monoids $\Gamma_I(G,H)$ in \S\S\,\ref{sect_proofs_C2},\,\ref{sect_proofs_C3}.

\begin{proposition} \label{prop_indec_I}
For every $i \in I$ and every $\mu \in \Lambda^+(H)$ with $m_{\pi_i}(\mu) > 0$, the element $(\pi_i; \mu)$ is indecomposable in~$\Gamma_I(G,H)$.
\end{proposition}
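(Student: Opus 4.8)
The plan is to argue by contradiction using the grading on $\Gamma_I(G,H)$ induced by the first coordinate $\lambda \in \Lambda^+_I(G)$. Suppose $(\pi_i;\mu)$ is decomposable, so that $(\pi_i;\mu) = (\lambda_1;\mu_1) + (\lambda_2;\mu_2)$ with both summands lying in $\Gamma_I(G,H)$ and neither equal to $(0;0)$. Projecting to the first coordinate gives $\pi_i = \lambda_1 + \lambda_2$ in $\Lambda^+_I(G)$. Since $\pi_i$ is a fundamental weight, it is indecomposable in the monoid $\Lambda^+_I(G) = \ZZ^+\lbrace \pi_j \mid j \in I\rbrace$; hence one of $\lambda_1, \lambda_2$ must be zero, say $\lambda_1 = 0$. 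Then $(\lambda_1;\mu_1) = (0;\mu_1) \in \Gamma_I(G,H)$, and by Remark~\ref{remark_00} the only such element is $(0;0)$, contradicting the assumption that the decomposition is nontrivial.

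The only point requiring a word of care is the claim that $\pi_i$ is an indecomposable element of $\Lambda^+_I(G)$: this is immediate because the $\pi_j$ with $j \in I$ form a $\ZZ^+$-basis of $\Lambda^+_I(G)$ (they are part of a $\ZZ$-basis of the weight lattice of~$G$), so the coefficient of $\pi_i$ in any expression $\pi_i = \lambda_1 + \lambda_2$ with $\lambda_1,\lambda_2 \in \Lambda^+_I(G)$ splits as a sum of two nonnegative integers equal to~$1$, forcing one of them to vanish together with all other coordinates. There is essentially no obstacle here; the argument is a short formal manipulation once Remark~\ref{remark_00} is invoked. One should simply make sure that the hypothesis $m_{\pi_i}(\mu) > 0$ is only needed to guarantee $(\pi_i;\mu) \in \Gamma_I(G,H)$ in the first place, so that the statement is not vacuous.
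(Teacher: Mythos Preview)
Your proof is correct and follows essentially the same approach as the paper, which simply states ``This follows from Remark~\ref{remark_00}.'' You have spelled out the details that the paper leaves implicit (namely, that $\pi_i$ is indecomposable in $\Lambda^+_I(G)$, forcing one summand to have zero first coordinate), but the core idea is identical.
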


\begin{proof}
This follows from Remark~\ref{remark_00}.
\end{proof}

For every $i \in I$ we set $M(i) = \lbrace \mu \in \Lambda^+(H) \mid m_{\pi_i}(\mu) > 0 \rbrace$. For every $i,j \in I$ (not necessarily distinct) we define the $H$-module $W_{i,j}(G,H) = \bigoplus \limits_{\mu \in M(i) + M(j)} R_H(\mu)$. Since $\Gamma_I(G,H)$ is a monoid, it follows from the definition that $W_{i,j}(G,H)$ is a submodule of~$\left. R_G(\pi_i + \pi_j) \right|_H$.

\begin{proposition} \label{prop_indec_II}
For every $i,j \in I$ and every $\mu \in \Lambda^+(H)$ such that $m_{\pi_i+\pi_j}(\mu) > 0$ and $\mu \notin M(i) + M(j)$, the element $(\pi_i + \pi_j; \mu)$ is indecomposable in~$\Gamma_I(G,H)$.
\end{proposition}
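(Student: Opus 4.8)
The plan is to argue by contradiction: suppose $(\pi_i + \pi_j; \mu)$ decomposes in $\Gamma_I(G,H)$, write $(\pi_i + \pi_j; \mu) = (\lambda_1; \mu_1) + (\lambda_2; \mu_2)$ with both summands nonzero elements of $\Gamma_I(G,H)$, and derive a contradiction with the hypothesis $\mu \notin M(i) + M(j)$. The first step is to analyze the possibilities for $\lambda_1$ and $\lambda_2$. Since $\lambda_1, \lambda_2 \in \Lambda^+_I(G)$ and $\lambda_1 + \lambda_2 = \pi_i + \pi_j$, and since the fundamental weights $\pi_k$, $k \in I$, are part of a $\ZZ$-basis of $\ZZ \Lambda^+_I(G)$, the only way to write $\pi_i + \pi_j$ as a sum of two elements of $\Lambda^+_I(G)$ is the trivial splitting: up to order, $\lambda_1 = \pi_i$ and $\lambda_2 = \pi_j$ (this includes the case $i = j$, where $\lambda_1 = \lambda_2 = \pi_i$). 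By Remark~\ref{remark_00}, neither $\lambda_1$ nor $\lambda_2$ can be zero, so no other splitting occurs.

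The second step then uses the definitions directly. From $(\pi_i; \mu_1) \in \Gamma(G,H)$ we get $m_{\pi_i}(\mu_1) > 0$, i.e.\ $\mu_1 \in M(i)$; similarly $\mu_2 \in M(j)$. Hence $\mu = \mu_1 + \mu_2 \in M(i) + M(j)$, contradicting the hypothesis. This completes the argument.

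The main thing to get right is the first step — the claim that $\pi_i + \pi_j$ admits only the trivial decomposition within $\Lambda^+_I(G)$. This is immediate because $\Lambda^+_I(G) = \ZZ^+\lbrace \pi_k \mid k \in I \rbrace$ is the free commutative monoid on $\lbrace \pi_k \mid k \in I \rbrace$ (the fundamental weights are linearly independent), so writing $\pi_i + \pi_j = \lambda_1 + \lambda_2$ with $\lambda_1, \lambda_2 \in \Lambda^+_I(G)$ forces, after matching coefficients, that $\lbrace \lambda_1, \lambda_2 \rbrace = \lbrace \pi_i, \pi_j \rbrace$ as a multiset. There is no real obstacle here; the proposition is essentially a bookkeeping consequence of Remark~\ref{remark_00} together with the free structure of $\Lambda^+_I(G)$, exactly parallel to the proof of Proposition~\ref{prop_indec_I}. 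One should simply be careful to phrase the splitting so that it covers the case $i = j$ uniformly.
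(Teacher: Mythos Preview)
Your proof is correct and follows essentially the same approach as the paper's: the paper's one-line proof invokes Remark~\ref{remark_00} and the definition of $M(i)+M(j)$, which unpacks to exactly the argument you wrote out---a nontrivial decomposition of $(\pi_i+\pi_j;\mu)$ in $\Gamma_I(G,H)$ must have first components $\pi_i$ and $\pi_j$ (since $\Lambda^+_I(G)$ is free on the $\pi_k$'s and Remark~\ref{remark_00} rules out a zero first component), forcing $\mu\in M(i)+M(j)$.
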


\begin{proof}
This follows from the definition of the set $M(i)+M(j)$ and again from Remark~\ref{remark_00}.
\end{proof}

\section{The restricted branching monoids in cases (\ref{case_C1}),~(\ref{case_C2}), and~(\ref{case_C3})}
\label{sect_main_theorems}

In this section, we present the rank and indecomposable elements of all restricted branching monoids in cases~(\ref{case_C1}),~(\ref{case_C2}), and~(\ref{case_C3}).

\subsection{Reductions}
The goal of this subsection is to describe several reductions that simplify the statement of all the theorems in this section. Fix a connected reductive subgroup $H \subset G$ and a subset $I \subset S$ and suppose that the variety $X_I$ is $H$-spherical.

\textit{Reduction}~1. Choose an arbitrary subset~$I' \subset I$. Then the variety $X_{I'}$ is automatically $H$-spherical and the indecomposable elements of $\Gamma_{I'}(G,H)$ are those of $\Gamma_I(G,H)$ for which the first component belongs to $\Lambda^+_{I'}(G)$. Therefore, for a given pair $(G,H)$, it is enough to consider subsets $I \subset S$ that are maximal with the property that $X_I$ is $H$-spherical.

\textit{Reduction}~2. For every $i \in S$ define $i^* \in S$ in such a way that $\pi_{i^*} = \pi_i^*$ and put $I^* = \lbrace i^* \mid i \in I \rbrace$.
Given $\lambda \in \Lambda^+_I(G)$, taking the duals of the both sides of~(\ref{eqn_restriction}) we get
\[
\left. R_G(\lambda^*) \right|_H \simeq \bigoplus \limits_{\mu \in \Lambda^+(H) \, : \, (\lambda,\mu) \in \Gamma_I(G,H)} R_H(\mu^*).
\]
Then Theorem~\ref{thm_criterion_spherical} implies that $X_{I^*}$ is $H$-spherical as well. Moreover, $(\lambda; \mu) \in \nobreak \Gamma_I(G,H)$ if and only if $(\lambda^*; \mu^*) \in \Gamma_{I^*}(G,H)$. In particular, $(\lambda; \mu)$ is an indecomposable element of $\Gamma_I(G,H)$ if and only if $(\lambda^*; \mu^*)$ is an indecomposable element of $\Gamma_{I^*}(G,H)$. In this situation, we say that the triples $(G,H,I)$ and $(G,H,I^*)$ are related by duality.

\textit{Reduction}~3. Given an outer automorphism $\sigma$ of~$G$, denote by the same letter $\sigma$ the following objects:
\begin{itemize}
\item
the bijection of $S$ induced by the corresponding automorphism of the Dynkin diagram of~$G$;

\item
the induced bijection $\Lambda^+(G) \to \Lambda^+(G)$;

\item
the induced bijection $\Lambda^+(H) \to \Lambda^+(\sigma(H))$.
\end{itemize}
Now, given $\lambda \in \Lambda^+_I(G)$, after changing the action of $G$ on $R_G(\lambda)$ to $(g,v) \mapsto \sigma^{-1}(g)(v)$ formula~(\ref{eqn_restriction}) takes the form
\[
\left. R_G(\sigma(\lambda)) \right|_{\sigma(H)} \simeq \bigoplus \limits_{\mu \in \Lambda^+(H) \, : \, (\lambda; \mu) \in \Gamma_I(G,H)} R_{\sigma(H)}(\sigma(\mu)).
\]
Then Theorem~\ref{thm_criterion_spherical} implies that $X_{\sigma(I)}$ is $\sigma(H)$-spherical. Moreover, $(\lambda; \mu) \in \nobreak \Gamma_I(G,H)$ if and only if $(\sigma(\lambda); \sigma(\mu)) \in \Gamma_{\sigma(I)}(G,\sigma(H))$. In particular, $(\lambda; \mu)$ is an indecomposable element of $\Gamma_I(G,H)$ if and only if $(\sigma(\lambda); \sigma(\mu))$ is an indecomposable element of $\Gamma_{\sigma(I)}(G,\sigma(H))$. In this situation, we say that the triple $(G,\sigma(H),\sigma(I))$ is obtained from $(G,H,I)$ by the automorphism~$\sigma$. Note that if $H = L_J$ for a subset $J \subset S$ then $\sigma(H)$ is conjugate to $L_{\sigma(J)}$.

\subsection{The Levi subgroup case}
\label{subsec_C1}

In this subsection, we present a description of restricted branching monoids for all spherical actions on flag varieties in case~(\ref{case_C1}). This description follows from results of Ponomareva obtained in~\cite{Pon15, Pon17}. More precisely, \cite[Theorem~2, part b) and Theorem~1]{Pon17} reduce the description of $\Gamma_I(G,H)$ in each case to computing the algebras of unipotent invariants of Cox rings of corresponding double flag varieties; in turn, these algebras are computed in \cite[\S\S\,7--9]{Pon15} for the classical groups~$G$ and in~\cite[\S\S\,6,7]{Pon17} for the exceptional groups~$G$.

For the sake of convenience, we separate the cases $G = \SL_n$ and $G \not\simeq \SL_n$.

First, we consider the case $G = \SL_n$. It is well known that the (conjugacy classes of) Levi subgroups of $\SL_n$ are in bijection with partitions of~$n$, that is, tuples of positive integers $(a_1, \ldots, a_t)$ such that $a_1 \ge \ldots \ge a_t$ and $a_1 + \ldots + a_t = n$. Given such a partition $\mathbf a = (a_1, \ldots, a_t)$, the corresponding Levi subgroup $L_{\mathbf{a}}$ of $\SL_n$ is determined as follows. Fix a decomposition $\FF^n = V_1 \oplus \ldots \oplus V_t$ into a direct sum of subspaces $V_1, \ldots, V_t$ such that $\dim V_i = a_i$ for all $i = 1, \ldots, t$. Then $L_{\mathbf a}$ consists of all elements of $\SL_n$ stabilizing each of the subspaces $V_1, \ldots, V_t$. Clearly, $L'_{\mathbf a} \simeq \SL_{a_1} \times \ldots \times \SL_{a_t}$.

\begin{theorem} \label{thm_Levi_sl}
Suppose that $G = \SL_n$, $\mathbf a$ is a partition of~$n$, and $I \subset S$ is a nonempty subset. Put $H = L_{\mathbf a}$. Then the following conditions are equivalent:
\begin{enumerate}[label=\textup{(\arabic*)},ref=\textup{\arabic*}]
\item
The variety $X_I$ is $H$-spherical and $I$ is maximal with this property.

\item
Up to duality, the triple $(G, \mathbf a, I)$ is contained in Table~\textup{\ref{table_Levi_sl}} \textup(see \textup{\S\,\ref{subsec_the_tables})}.
\end{enumerate}
Moreover, Table~\textup{\ref{table_Levi_sl}} lists also the rank and indecomposable elements of the monoid $\Gamma_I(G,H)$ for each of the triples $(G, \mathbf a, I)$.
\end{theorem}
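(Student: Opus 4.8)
The plan is to prove Theorem~\ref{thm_Levi_sl} by combining the general framework of~\S\,\ref{sect_RBM&SAFV} with the classification of spherical actions of Levi subgroups on flag varieties of $\SL_n$ and with the results of Ponomareva on Cox rings of double flag varieties. The equivalence $(1) \Leftrightarrow (2)$ is, strictly speaking, a restatement of the known classification in case~(\ref{case_C1}) with $G = \SL_n$: a triple $(\SL_n, \mathbf a, I)$ has $X_I$ being $H$-spherical if and only if it appears (up to passing to a subset of~$I$ and up to duality) in the relevant tables of~\cite{Lit, MWZ1, MWZ2, Stem} (see also~\cite{Pon13}), and Reduction~1 lets us restrict attention to subsets $I$ maximal with the sphericity property, which is exactly the content of Table~\ref{table_Levi_sl}. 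So the first step is simply to recall this classification and to check, case by case, that the entries of Table~\ref{table_Levi_sl} constitute a complete list of the maximal sphericity data up to duality (Reduction~2); here one must verify that no two distinct table entries are related by duality and that every spherical triple reduces to a table entry, which is a finite bookkeeping task.

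The substantive part is the ``moreover'' clause: for each triple in the table, one must verify that the listed rank and listed indecomposable elements do describe the free monoid $\Gamma_I(G,H)$. For the rank, the plan is to apply Corollary~\ref{crl_rank_of_Gamma_refined}: after conjugating $H = L_{\mathbf a}$ so that $B_H^- \subset B_G^-$, one computes a Levi subgroup $M$ of $P_I^- \cap H$ together with the $M$-module $\mathfrak g/(\mathfrak p_I^- + \mathfrak h)$, identifies the latter (up to geometric equivalence and after stripping redundant scalar factors, as explained in~\S\,\ref{subsec_spherical_modules}) with a spherical module from Knop's list in~\cite[\S\,5]{Kn}, and reads off its rank $r$; then $\rk \Gamma_I(G,H) = |I| + r$. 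Since by Theorem~\ref{thm_monoid_is_free} the monoid $\Gamma_I(G,H)$ is free of this rank, it suffices to exhibit exactly $|I| + r$ indecomposable elements and to prove they generate. For the generation, the plan is to invoke Ponomareva's results: by~\cite[Theorem~2(b) and Theorem~1]{Pon17} the description of $\Gamma_I(G,H)$ is reduced to the algebra of unipotent invariants of the Cox ring of the double flag variety $G/P_I^- \times G/B_H^-$ (equivalently of $\SL_n/P_I^- \times H/B_H^-$), and the generators of these invariant algebras are computed explicitly in~\cite[\S\S\,7--9]{Pon15}. Translating the generators found there into pairs $(\lambda; \mu) \in \Lambda^+_I(G) \times \Lambda^+(H)$ yields precisely the list in Table~\ref{table_Levi_sl}; one then checks directly that these pairs lie in $\Gamma_I(G,H)$ (e.g.\ by explicitly decomposing the relevant $\left.R_G(\pi_i)\right|_H$ and $\left.R_G(\pi_i+\pi_j)\right|_H$ and reading off the highest weights appearing) and that their number matches the rank computed above, whence by freeness they are exactly the indecomposable elements.

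Alternatively, and more self-containedly, for each individual table entry one can bypass~\cite{Pon15, Pon17} and argue as announced in the introduction: having computed $\rk \Gamma_I(G,H) = |I| + r$ via Corollary~\ref{crl_rank_of_Gamma_refined}, one decomposes into $H$-irreducibles the restrictions $\left.R_G(\pi_i)\right|_H$ for $i \in I$ (and, if necessary, one further module $\left.R_G(\pi_i + \pi_j)\right|_H$). Propositions~\ref{prop_indec_I} and~\ref{prop_indec_II} then immediately certify that all the pairs $(\pi_i;\mu)$ with $m_{\pi_i}(\mu) > 0$ and all the pairs $(\pi_i+\pi_j;\mu)$ with $\mu \notin M(i)+M(j)$ are indecomposable; one checks that the total count of such pairs equals $|I| + r$, and freeness of the monoid (Theorem~\ref{thm_monoid_is_free}) forces the list to be complete. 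The main obstacle is precisely the case-by-case computation of the pairs $(M, \mathfrak g/(\mathfrak p_I^- + \mathfrak h))$ and the matching of each with Knop's classification, together with the explicit branching of the small fundamental representations of $\SL_n$ to $L_{\mathbf a}$ — these are routine in principle but require careful handling of the scalar (central torus) factors and of which modules in the table need the extra $R_G(\pi_i+\pi_j)$ generator; everything else is formal consequence of the machinery already set up.
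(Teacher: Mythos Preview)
Your proposal is correct and matches the paper's approach: the paper does not give an independent proof of Theorem~\ref{thm_Levi_sl} but simply states (in the paragraph preceding the theorem) that the description of $\Gamma_I(G,H)$ follows from Ponomareva's results~\cite{Pon15,Pon17}, exactly as in your first plan. Your alternative self-contained route via Corollary~\ref{crl_rank_of_Gamma_refined} and Propositions~\ref{prop_indec_I},~\ref{prop_indec_II} is also acknowledged in the paper's introduction as a possible but unexecuted approach that would yield ``a proof completely different from that in~\cite{Pon15,Pon17}''.
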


\begin{remark}
Under the assumptions of Theorem~\ref{thm_Levi_sl}, for each triple $(G,\mathbf a,I)$ in Table~\ref{table_Levi_sl}, all subgroups $K \subset G$ such that $H' \subset K \subset H$ and $K$ acts spherically on $X_I$ are described in~\cite[Theorem~1.7]{AvP}. By Theorem~\ref{thm_criterion_spherical}, such subgroups $K$ are characterized by the property that the restrictions to $\Lambda_I^+(G) \oplus \Lambda^+(K)$ of all the indecomposable elements of $\Gamma_I(G,H)$ are linearly independent.
\end{remark}

In \S\S\,\ref{sect_proofs_C2},\,\ref{sect_proofs_C3} we shall need the following consequence of Theorem~\ref{thm_Levi_sl}, obtained by Reduction~1 from Case~\ref{sl_Levi_pq_1i_part1} in Table~\ref{table_Levi_sl}. (The notation is the same as in Table~\ref{table_Levi_sl}, see \S\,\ref{subsec_the_tables}.)

\begin{proposition} \label{prop_sl_Levi_pq_i}
Suppose that $G = \SL_n$, $H = L_{\mathbf a}$ with~$\mathbf a = (p,q)$, and $I = \lbrace i \rbrace$ with $1 \le i \le\nobreak p$. Then the variety $X_I$ is $H$-spherical, $\rk \Gamma_I(G/H) = \min(i,q) + 1$, and the indecomposable elements of $\Gamma_I(G/H)$ are \mbox{$(\pi_i; \pi_{i-k} + \pi'_k + (i-k)\chi_1 + k\chi_2)$} for $0 \le k \le\nobreak \min(i,q)$.
\end{proposition}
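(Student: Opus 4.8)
The plan is to deduce this as a special case of the $\SL_n$ Levi computation (Theorem~\ref{thm_Levi_sl}, Case~\ref{sl_Levi_pq_1i_part1}) via Reduction~1, so no fresh sphericity argument is needed: once $X_{\{i\}}$ is seen to be a sub-case of a triple $(G,\mathbf a,J)$ in Table~\ref{table_Levi_sl} with $i \in J$, the variety $X_{\{i\}}$ is automatically $H$-spherical and the indecomposable elements of $\Gamma_{\{i\}}(G,H)$ are exactly those indecomposable elements of $\Gamma_J(G,H)$ whose first component lies in $\Lambda^+_{\{i\}}(G) = \ZZ^+\pi_i$. So the whole content of the proposition is (a) identifying the right row of the table and (b) extracting from its list of indecomposables precisely those of the form $(\pi_i;*)$, then reading off their number to get the rank.

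First I would record the elementary representation-theoretic computation that underlies the table entry, to make the statement self-contained. For $\mathbf a = (p,q)$ with $p+q=n$ and $1 \le i \le p$, one has $\left.R_G(\pi_i)\right|_H = \left.\wedge^i \FF^n\right|_{\SL_p \times \SL_q}$, and the Cauchy/Littlewood decomposition gives
\[
\wedge^i(\FF^p \oplus \FF^q) \;\simeq\; \bigoplus_{k=0}^{\min(i,q)} \wedge^{i-k}\FF^p \otimes \wedge^k\FF^q
\]
as a module over $\SL_p \times \SL_q$; twisting each summand by the appropriate character of the central torus of $H = L_{(p,q)}$ (the factors $(i-k)\chi_1 + k\chi_2$, where $\chi_1,\chi_2$ are the generators of $\mathfrak X(H/H') $ normalized as in Table~\ref{table_Levi_sl}), the highest weight of the $k$-th summand is $\pi_{i-k} + \pi'_k + (i-k)\chi_1 + k\chi_2$, with the conventions $\pi_0 = \pi'_0 = 0$ and $\pi_p$ (resp.\ $\pi'_q$) interpreted as $0$ since $\SL_p$-reps have only $p-1$ fundamental weights — but the constraint $i \le p$ together with $k \le \min(i,q)$ keeps all indices in range. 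This exhibits exactly $\min(i,q)+1$ pairs $(\pi_i;\mu)$ in $\Gamma(G,H)$, all of the form claimed.

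Next I would invoke Proposition~\ref{prop_indec_I}: every element $(\pi_i;\mu)$ with $m_{\pi_i}(\mu)>0$ is indecomposable in $\Gamma_{\{i\}}(G,H)$, so the $\min(i,q)+1$ pairs just listed are all indecomposable. It remains to argue there are no others, i.e.\ that these generate $\Gamma_{\{i\}}(G,H)$; this is where I lean on Theorem~\ref{thm_Levi_sl}. The monoid $\Gamma_{\{i\}}(G,H)$ is free by Theorem~\ref{thm_monoid_is_free}, and by Proposition~\ref{prop_rank_of_Gamma} (or directly, since $\left.R_G(\pi_i)\right|_H$ has $\min(i,q)+1$ summands and $|I|=1$ forces every indecomposable to have first component $\pi_i$ by Remark~\ref{remark_00}) its rank is the number of distinct summands, namely $\min(i,q)+1$. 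Since a free monoid's indecomposables are its unique minimal generating set and we have produced $\min(i,q)+1$ indecomposable elements, they are all of them. This gives $\rk\Gamma_{\{i\}}(G,H) = \min(i,q)+1$ and the stated list.

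The main obstacle — really the only point requiring care — is bookkeeping the normalization of the central characters $\chi_1,\chi_2$ and the fundamental-weight labels $\pi_{i-k}, \pi'_k$ so that they literally match the conventions fixed in Table~\ref{table_Levi_sl} and in Case~\ref{sl_Levi_pq_1i_part1}; in particular one must check that restricting the table's list of indecomposables to those with first component a multiple of $\pi_i$ (Reduction~1) picks out exactly the sublist with $j$-index equal to $i$ and no spurious decomposable-looking elements $(\pi_i + \pi_j;*)$ survive, which is immediate since such elements have first component $\pi_i+\pi_j \notin \ZZ^+\pi_i$ when $j \ne i$ and, when $j=i$, lie in $M(i)+M(i)$ by the monoid structure and hence are decomposable. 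No genuinely hard step arises: the result is a direct specialization of already-established facts.
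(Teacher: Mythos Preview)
Your approach is correct and is exactly what the paper does: the proposition is stated there as an immediate consequence of Theorem~\ref{thm_Levi_sl}, obtained by Reduction~1 from Case~\ref{sl_Levi_pq_1i_part1} (restricting the indecomposables of $\Gamma_{\{1,i\}}(G,H)$ to those with first component in $\ZZ^+\pi_i$). One minor slip worth flagging: your parenthetical claim that ``$|I|=1$ forces every indecomposable to have first component $\pi_i$ by Remark~\ref{remark_00}'' is not valid---Remark~\ref{remark_00} only rules out first component $0$, not higher multiples $m\pi_i$ with $m\ge 2$---so the ``or directly'' shortcut does not stand on its own; but since your main argument correctly rests on Reduction~1 and the table (where no $(2\pi_i;*)$ indecomposables appear for this case), the slip is harmless.
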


We now turn to the case $G \not\simeq \SL_n$.

\begin{theorem} \label{thm_Levi_non-sl}
Suppose that $G \not\simeq \SL_n$ and $I,J \subset S$ are nonempty subsets. Put $H = L_J$. Then the following conditions are equivalent:
\begin{enumerate}[label=\textup{(\arabic*)},ref=\textup{\arabic*}]
\item
The variety $X_I$ is $H$-spherical and $I$ is maximal with this property.

\item
Up to automorphism of~$G$, the triple $(G,J,I)$ is contained in Table~\textup{\ref{table_Levi_non-sl}} \textup(see \textup{\S\,\ref{subsec_the_tables})}.
\end{enumerate}
Moreover, Table~\textup{\ref{table_Levi_non-sl}} lists also the rank and indecomposable elements of the monoid $\Gamma_I(G,H)$ for each of the triples $(G,J,I)$.
\end{theorem}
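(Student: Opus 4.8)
The plan is to split the assertion into two parts --- the classification of triples $(G,J,I)$ for which $X_I$ is $L_J$-spherical with $I$ maximal, and the computation of the rank and indecomposable elements of $\Gamma_I(G,H)$ for $H = L_J$ --- and to treat each using the machinery of \S\,\ref{sect_RBM&SAFV} together with the classification of spherical modules recalled in \S\,\ref{subsec_spherical_modules}. As already noted, both parts also follow from the work of Ponomareva~\cite{Pon15,Pon17}; I sketch below the self-contained route.

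For the classification, I would use Reduction~1 to reduce to subsets $I$ that are maximal with the property that $X_I$ is $L_J$-spherical, and Reduction~3 to work up to automorphisms of~$G$. Since $H = L_J$ contains~$T_G$, the subgroup $H \cap B_G^-$ is the Borel subgroup of~$H$ opposite to $H \cap B_G$, hence lies in~$B_G^-$; thus $Q := H \cap P_I^-$ is a parabolic subgroup of~$H$, and putting $y := eP_I^-$ the orbit $Y := Hy \cong H/Q$ is a closed $H$-orbit in~$X_I$ with stabilizer $H_y = Q$. Let $M$ be a Levi subgroup of~$Q$. By Corollary~\ref{crl_sphericity_criterion}, $X_I$ is $H$-spherical if and only if $\mathfrak g / (\mathfrak p_I^- + \mathfrak h)$ is a spherical $M$-module. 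The $M$-module $\mathfrak g / (\mathfrak p_I^- + \mathfrak h)$ is read off directly from the root-space decomposition of~$\mathfrak g$ relative to~$T_G$, and whether it is spherical is decided by the classification in \S\,\ref{subsec_spherical_modules}; running through the possibilities --- a finite check for each exceptional type and a uniform analysis in the rank for the classical types --- singles out exactly the triples of Table~\ref{table_Levi_non-sl}. This step merely reproduces the known classification of spherical actions of Levi subgroups on flag varieties, see~\cite{Lit,MWZ1,MWZ2,Stem}.

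For the branching monoid, the key point is Theorem~\ref{thm_monoid_is_free}: $\Gamma_I(G,H)$ is free, so it has exactly $r := \rk \Gamma_I(G,H)$ indecomposable elements (a free commutative monoid of rank~$r$ being isomorphic to $(\ZZ^+)^r$), and it is enough to compute~$r$ and to exhibit $r$ distinct indecomposable elements. The rank I would obtain from Corollary~\ref{crl_rank_of_Gamma_refined}, namely $r = |I| + \rk_M (\mathfrak g / (\mathfrak p_I^- + \mathfrak h))$, computing the rank of that spherical $M$-module via~\cite[\S\,5]{Kn} after passing to its derived-group data as explained in \S\,\ref{subsec_spherical_modules}. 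To exhibit the indecomposable elements, I would decompose into irreducible $H$-summands each restriction $\left. R_G(\pi_i) \right|_H$ with $i \in I$ --- a routine computation, as for classical~$G$ these are built from the defining representation and its exterior powers and for exceptional~$G$ they are small --- and read off the indecomposable elements $(\pi_i; \mu)$ with $\mu \in M(i)$ via Proposition~\ref{prop_indec_I}. Whenever these do not already number~$r$, I would decompose one further module $\left. R_G(\pi_i + \pi_j) \right|_H$ for suitable $i,j \in I$ and apply Proposition~\ref{prop_indec_II} to obtain the remaining indecomposable elements of the form $(\pi_i + \pi_j; \mu)$. In every case of Table~\ref{table_Levi_non-sl} this should produce precisely~$r$ indecomposable elements, which by freeness are then all of them.

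I expect the main obstacle to be the case-by-case bookkeeping for the exceptional groups: correctly identifying $\mathfrak g / (\mathfrak p_I^- + \mathfrak h)$ as an $M$-module and matching it against the spherical-module list, and verifying in each surviving case that Propositions~\ref{prop_indec_I} and~\ref{prop_indec_II} already supply $r$ linearly independent indecomposable elements (equivalently, that no restriction of a simple $G$-module with a higher highest weight is needed). Should this last point fail in some case, the alternative is to apply~\cite[Theorem~2, part b) and Theorem~1]{Pon17}, which reduces $\Gamma_I(G,H)$ in each case to an algebra of unipotent invariants of the Cox ring of a double flag variety, and to read the entries of Table~\ref{table_Levi_non-sl} off the generators of that algebra as determined in~\cite[\S\S\,7--9]{Pon15} for classical~$G$ and in~\cite[\S\S\,6,7]{Pon17} for exceptional~$G$.
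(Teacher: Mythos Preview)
Your proposal is correct, but it takes a different route from the paper's own treatment. The paper does not give an independent proof of Theorem~\ref{thm_Levi_non-sl}: in~\S\,\ref{subsec_C1} it simply records that the description of~$\Gamma_I(G,H)$ in case~(\ref{case_C1}) follows from Ponomareva's results, citing \cite[Theorem~2, part b) and Theorem~1]{Pon17} together with the explicit computations of unipotent invariants of Cox rings in \cite[\S\S\,7--9]{Pon15} and \cite[\S\S\,6,7]{Pon17}. Your primary approach instead runs the machinery of~\S\,\ref{sect_RBM&SAFV} (freeness via Theorem~\ref{thm_monoid_is_free}, rank via Corollary~\ref{crl_rank_of_Gamma_refined}, indecomposables via Propositions~\ref{prop_indec_I} and~\ref{prop_indec_II}) directly on each triple. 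The paper explicitly remarks in the introduction that this alternative route would work and would yield ``a proof completely different from that in~\cite{Pon15,Pon17}'', and the observation there that all indecomposable elements have first component of the form~$\pi_i$ or~$\pi_i+\pi_j$ confirms that your worry about needing higher highest weights does not materialize. What your approach buys is self-containedness within the paper's framework, at the cost of the case-by-case bookkeeping you flag; what the paper's citation buys is brevity, at the cost of relying on the substantial Cox-ring computations carried out elsewhere. Your fallback paragraph is exactly the paper's actual argument.
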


\begin{remark}
Under the assumptions of Theorems~\ref{thm_Levi_sl} and~\ref{thm_Levi_non-sl}, the description of $\Gamma_I(G, H)$ enables one to decompose any representation $R_G(\lambda)$ with $\lambda \in \Lambda^+_I(G)$ to any subgroup between $H$ and~$H'$ by restricting characters of~$C_H$.
\end{remark}

\subsection{The symmetric subgroup case}
\label{subsec_C2}

A subgroup $K$ of $G$ is said to be \textit{symmetric} if $K$ is the subgroup of fixed points of a nontrivial involutive automorphism $\theta$ of~$G$. As $G$ is simply connected, in this case $K$ is reductive and connected by~\cite[Theorem~8.1]{Stei}.

\begin{theorem} \label{thm_symmetric}
Suppose that $I \subset S$ is a nonempty subset and
\begin{itemize}
\item
$H$ is a symmetric subgroup of $G$;

\item
$H$ is not a Levi subgroup of $G$.
\end{itemize}
Then the following conditions are equivalent:
\begin{enumerate}[label=\textup{(\arabic*)},ref=\textup{\arabic*}]
\item \label{sym1}
The variety $X_I$ is $H$-spherical and $I$ is maximal with this property.

\item \label{sym2}
Up to duality and up to an automorphism of~$G$, the triple $(G,H,I)$ is contained in Table~\textup{\ref{table_sym}} \textup(see \textup{\S\,\ref{subsec_the_tables})}.
\end{enumerate}
Moreover, Table~\textup{\ref{table_sym}} lists also the rank and indecomposable elements of the monoid $\Gamma_I(G,H)$ for each of the triples $(G,H,I)$.
\end{theorem}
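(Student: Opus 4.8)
The plan is to prove the equivalence $(\ref{sym1}) \Leftrightarrow (\ref{sym2})$ by combining the known classification of spherical actions on flag varieties in the symmetric case with the rank and indecomposability tools developed in~\S\,\ref{sect_RBM&SAFV}, and then to verify the entries of Table~\ref{table_sym} case by case. For the implication $(\ref{sym2}) \Rightarrow (\ref{sym1})$ there is essentially nothing to do beyond inspection: every triple in the table has $X_I$ being $H$-spherical by construction, and maximality of $I$ is recorded in the table. The substantive direction is $(\ref{sym1}) \Rightarrow (\ref{sym2})$, and here I would first invoke the classification of~\cite{HNOO} (case~(\ref{case_C2})) to reduce to a finite list of triples $(G,H,I)$ with $G$ simple, $H$ symmetric but not Levi, and $I$ maximal; after discarding those that fall into case~(\ref{case_C1}) and organizing the rest up to duality (Reduction~2) and up to an automorphism of~$G$ (Reduction~3), what remains is precisely the collection of triples appearing in Table~\ref{table_sym}.

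With the classification in hand, the remaining work is to compute, for each surviving triple, the rank of $\Gamma_I(G,H)$ and a full set of indecomposable generators. For the rank I would use Corollary~\ref{crl_rank_of_Gamma_refined}: choosing a representative $H$ in its conjugacy class with $B_H^- \subset B_G^-$, one has $\rk \Gamma_I(G,H) = |I| + \rk_M\bigl(\mathfrak g/(\mathfrak p_I^- + \mathfrak h)\bigr)$, where $M$ is a Levi subgroup of $P_I^- \cap H$. As noted in the introduction, the pairs $(M,V)$ with $V = \mathfrak g/(\mathfrak p_I^- + \mathfrak h)$ were already determined in~\cite{HNOO}, so the value of $\rk_M V$ follows from the classification of spherical modules and the tables of~\cite[\S\,5]{Kn}, using the reduction of $V$ to its indecomposable saturated constituents described in~\S\,\ref{subsec_spherical_modules}; this gives the ``rank'' column of Table~\ref{table_sym} almost immediately. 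For the indecomposable elements I would decompose $\left.R_G(\pi_i)\right|_H$ for each $i \in I$ (these computations are routine branching-rule calculations for symmetric pairs) and, in the few cases where the resulting elements $(\pi_i;\mu)$ together with their sums do not yet account for a free monoid of the correct rank, decompose one further module $\left.R_G(\pi_i+\pi_j)\right|_H$; Propositions~\ref{prop_indec_I} and~\ref{prop_indec_II} then certify which of the weights obtained are genuinely indecomposable. Finally, since Theorem~\ref{thm_monoid_is_free} guarantees that $\Gamma_I(G,H)$ is free, it suffices to check in each case that the indecomposable elements produced are linearly independent and equal in number to the rank computed above; this closes the argument.

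The main obstacle, and the bulk of the actual work, will be the explicit branching computations: for each symmetric pair $(G,H)$ on the list one must decompose the relevant fundamental (and occasionally second-level) $G$-modules as $H$-modules, which in the classical cases means carefully tracking how the standard, exterior, and symmetric powers restrict under the embeddings $\SO_k \subset \SO_n$, $\Sp_{2k}\times\Sp_{2\ell}\subset\Sp_{2n}$, $\GL_k\subset\SO_{2k}$, and so on, and in the three exceptional cases with $|I|\ge 2$ (Table~\ref{table_sym}) requires a separate hands-on analysis of the embedding into $E_6$ or $E_7$. A secondary subtlety is bookkeeping: ensuring that the reductions by duality and by outer automorphisms have been applied consistently so that the table contains exactly one representative of each equivalence class, and that no triple reducible to case~(\ref{case_C1}) has been retained. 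Once these computations are carried out, the freeness statement of Theorem~\ref{thm_monoid_is_free} turns the verification into the purely numerical check described above, so no delicate structural argument is needed beyond what~\S\,\ref{sect_RBM&SAFV} already provides.
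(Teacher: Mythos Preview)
Your proposal is correct and matches the paper's approach essentially point for point: the equivalence $(\ref{sym1})\Leftrightarrow(\ref{sym2})$ is taken directly from~\cite[Theorems~5.2 and~4.2]{HNOO}, the rank is computed via Corollary~\ref{crl_rank_of_Gamma_refined} (the paper records the symmetric-case identification $\mathfrak g/(\mathfrak p_I^-+\mathfrak h)\simeq \mathfrak p_I^u\cap\mathfrak g^{-\theta}$ as Proposition~\ref{prop_rank_for_sym} and reads off the pairs $(M,V)$ from~\cite[\S\,5]{HNOO}), and the indecomposable elements are obtained exactly as you describe via Propositions~\ref{prop_indec_I} and~\ref{prop_indec_II}. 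The only details you leave implicit are that the paper handles the exceptional-group cases by computing restriction matrices and invoking the \texttt{LiE} package, and that several of the classical branching computations are streamlined by the auxiliary Propositions~\ref{prop_equal_rank_res} and~\ref{prop_spin_res}; also, one of your ``three exceptional cases with $|I|\ge 2$'' lives in $\mathsf F_4$, not $\mathsf E_6$ or~$\mathsf E_7$.
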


\begin{remark}
The equivalence of conditions~(\ref{sym1}) and~(\ref{sym2}) in Theorem~\ref{thm_symmetric} is implied by \cite[Theorems~5.2 and~4.2]{HNOO}.
The computations of the monoid $\Gamma_I(G,H)$ for each case in Table~\ref{table_sym} are carried out in~\S\,\ref{sect_proofs_C2}.
\end{remark}

\subsection{The case~\texorpdfstring{$G = \SL_n$}{G=SL\_n}}
\label{subsec_C3}

First of all, we discuss separately the case $I = \lbrace 1 \rbrace$. As was mentioned in the introduction, in this situation $X_I$ is the projective space~$\PP((\FF^n)^*)$. Consequently, given a connected reductive subgroup~$H \subset \SL_n$, the variety $X_I$ is spherical if and only if $(\FF^n)^*$ is a spherical $(H \times \FF^\times)$-module, where $\FF^\times$ acts by scalar transformations. Let $\varepsilon$ be the character via which $\FF^\times$ acts on~$\FF^n$. Then $\Lambda^+(H \times \FF^\times)$ is naturally identified with $\Lambda^+(H) \oplus \ZZ \varepsilon$. As $\FF[(\FF^n)^*]$ is the symmetric algebra of~$\FF^n$ and $\operatorname{S}^d (\FF^n) \simeq R_G(d\pi_1)$ for all $d \in \ZZ^+$, a comparison of the definitions of $\Gamma_I(G,H)$ and $\mathrm E((\FF^n)^*)$ yields the following result: given $d \in \ZZ^+$ and $\mu \in \Lambda^+(H)$, one has $(d\pi_1; \mu) \in \Gamma_I(G,H)$ if and only if $\mu + d\varepsilon \in \mathrm E((\FF^n)^*)$. The latter yields a canonical isomorphism $\Gamma_I(G,H) \simeq \mathrm E((\FF^n)^*)$. Taking into account the fact that the weight monoids are known for all spherical modules (see~\S\,\ref{subsec_spherical_modules}), in Theorem~\ref{thm_sl} below we assume $I \ne \lbrace 1 \rbrace$ and by duality $I \ne \lbrace n-1 \rbrace$.

\begin{remark}
In fact, if a connected reductive subgroup $H \subset \SL_n$ acts spherically on $X_I$ for some $I \ne \varnothing$ then $H$ automatically acts spherically on $\PP(\FF^n) \simeq X_{\lbrace n-1 \rbrace}$ (and hence on $\PP((\FF^n)^*) \simeq X_{\lbrace 1 \rbrace}$ by duality); see~\cite[Theorem~5.8]{Pet} or~\cite[Proposition~3.7]{AvP}.
\end{remark}

Given a connected reductive subgroup $H \subset G$, fix a decomposition $\FF^n = V_1 \oplus \ldots \oplus V_t$ into a direct sum of simple $H$-modules. Let $Z \subset G$ be the subgroup of elements that act by scalar transformations on each~$V_i$, $i = 1, \ldots, t$. Clearly, $H' = (Z H)'$ and $C_H \subset Z$.

If $X_I$ is an $H$-spherical variety for some $I \subset S$ then $X_I$ is $(Z  H)$-spherical as well. In this situation, restricting characters of $Z$ to~$C_H$ yields a natural isomorphism $\Gamma_I(G,Z H) \simeq \Gamma_I(G,H)$. Hence in the theorem below it is enough to restrict ourselves to the subgroups $H$ satisfying $C_H = Z$.

\begin{theorem} \label{thm_sl}
Suppose that $G = \SL_n$ with $n \ge 2$, $I \subset S$ is a nonempty subset distinct from $\lbrace 1 \rbrace$ and $\lbrace n - 1 \rbrace$, and $H \subset G$ is a connected reductive subgroup such that
\begin{itemize}
\item
$C_H = Z$;

\item
$H$ is not a Levi subgroup of~$G$;

\item
$H$ is not a symmetric subgroup of~$G$.
\end{itemize}
Then the following conditions are equivalent:
\begin{enumerate}[label=\textup{(\arabic*)},ref=\textup{\arabic*}]
\item \label{sl1}
The variety $X_I$ is $H$-spherical and $I$ is maximal with this property.

\item \label{sl2}
Up to duality, the triple $(G,H',I)$ is contained in Table~\textup{\ref{table_sl}} \textup(see \textup{\S\,\ref{subsec_the_tables})}.
\end{enumerate}
Moreover, Table~\textup{\ref{table_sl}} lists also the rank and indecomposable elements of the monoid $\Gamma_I(G,H)$ for each of the triples $(G,H',I)$.
\end{theorem}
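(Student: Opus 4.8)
The plan is to reduce everything to the two workhorses already established in this section: Corollary~\ref{crl_rank_of_Gamma_refined} for computing $\rk \Gamma_I(G,H)$, and Theorem~\ref{thm_monoid_is_free} together with Propositions~\ref{prop_indec_I} and~\ref{prop_indec_II} for pinning down the indecomposable elements. The logical structure of the statement is an equivalence plus a table verification, so there are really three tasks: (a) show that if $X_I$ is $H$-spherical with $I$ maximal, then $(G,H',I)$ (up to duality) occurs in Table~\ref{table_sl}; (b) show the converse; (c) for each triple in the table, compute the rank and the list of indecomposables. Task~(a) is not proved here from scratch — by the hypotheses ($H$ not Levi, not symmetric, $C_H = Z$, $I \ne \{1\},\{n-1\}$) the classification of spherical actions of reductive subgroups of $\SL_n$ on flag varieties from~\cite{AvP} (case~(\ref{case_C3})) already yields the list of triples $(G,H',I)$, and Reductions~1 and~2 let us cut it down to the entries displayed up to duality; so (a) and (b) are a citation of~\cite{AvP} combined with a bookkeeping check that the table faithfully records that classification. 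The genuine mathematical content is task~(c).

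For the rank, I would proceed case by case through Table~\ref{table_sl}. In each case I first choose a convenient representative of the conjugacy class of $H$ so that $B_H^- \subset B_G^-$, as permitted before Corollary~\ref{crl_rank_of_Gamma_refined}; concretely, for the subgroups appearing here ($H'$ of type $\Sp_{2k} \times \SL_m$-flavour, $\SL_k \times \SL_m$ acting on a tensor product, $\Spin_7$, $G_2$, etc., each embedded in a standard way in $\SL_n$) one writes down an explicit matrix realization in which the upper-triangular Borel of $H$ sits inside that of $\SL_n$. Then $Q = P_I^- \cap H$ is a parabolic of $H$, I identify a Levi subgroup $M \subset Q$, and I compute the $M$-module $\mathfrak g/(\mathfrak p_I^- + \mathfrak h) = \mathfrak{sl}_n/(\mathfrak p_I^- + \mathfrak h)$ explicitly as a sum of $M$-submodules of a matrix space. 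By Corollary~\ref{crl_rank_of_Gamma_refined} this is automatically a spherical $M$-module, and $\rk \Gamma_I(G,H) = |I| + \rk_M(\mathfrak g/(\mathfrak p_I^- + \mathfrak h))$. To get $\rk_M$ of this module I pass to the pair $(M', \mathfrak g/(\mathfrak p_I^- + \mathfrak h))$, decompose into indecomposable saturated spherical summands, and read off the rank from the tables in~\cite[\S\,5]{Kn}, exactly as announced in~\S\,\ref{subsec_spherical_modules}. This is routine but must be done for each row; the answers are the rank values in Table~\ref{table_sl}.

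For the indecomposable elements, the freeness from Theorem~\ref{thm_monoid_is_free} means that once the rank $r$ is known, it suffices to exhibit $r$ indecomposable elements that are linearly independent (and whose $\ZZ$-span contains no other generator — but for a free monoid of known rank, $r$ independent indecomposables must be the full set of indecomposables). I would obtain candidates by explicitly decomposing $\left.R_G(\pi_i)\right|_H$ into irreducibles for each $i \in I$, and, when $r$ exceeds $\sum_{i\in I}(\text{number of summands of } \left.R_G(\pi_i)\right|_H) - (|I|-1)$, also decomposing one extra module $\left.R_G(\pi_i+\pi_j)\right|_H$ for suitable $i,j \in I$; these restrictions are concrete finite computations via weights or Littlewood--Richardson-type rules for the explicit embeddings. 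Propositions~\ref{prop_indec_I} and~\ref{prop_indec_II} then certify which of the resulting pairs $(\pi_i;\mu)$ or $(\pi_i+\pi_j;\mu)$ are indecomposable: the first gives all $(\pi_i;\mu)$ with $m_{\pi_i}(\mu)>0$ for free, and the second flags exactly those $\mu$ occurring in $R_G(\pi_i+\pi_j)|_H$ but not in $M(i)+M(j)$. Counting these against the already-computed rank $r$ closes the argument in each case. The main obstacle is organizational rather than conceptual: there is no single hard step, but one must carry out the Levi-and-module extraction of Corollary~\ref{crl_rank_of_Gamma_refined} and the explicit branching of the fundamental (and occasionally one sub-regular) representations correctly and uniformly across all the entries of Table~\ref{table_sl}, and in the borderline cases decide precisely when the $(\pi_i+\pi_j)$-type generator is actually needed — this is where the bookkeeping is most delicate and most error-prone.
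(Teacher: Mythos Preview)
Your overall architecture matches the paper's exactly: the equivalence $(\ref{sl1})\Leftrightarrow(\ref{sl2})$ is cited from~\cite[Theorem~1.7]{AvP}, the rank in each row is obtained from Corollary~\ref{crl_rank_of_Gamma_refined} by computing the pair $(M',\mathfrak g/(\mathfrak p_I^-+\mathfrak h))$ and looking up its rank in Knop's tables, and the indecomposables are certified by Propositions~\ref{prop_indec_I} and~\ref{prop_indec_II} once enough restrictions have been decomposed. Your remark that exhibiting $r=\rk\Gamma_I(G,H)$ distinct indecomposables suffices (by freeness) is also how the paper closes each case.

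The one place where your proposal is vaguer than the paper is the actual mechanism for decomposing $\left.R_G(\pi_i)\right|_H$ and $\left.R_G(\pi_i+\pi_j)\right|_H$. You write ``concrete finite computations via weights or Littlewood--Richardson-type rules''; the paper instead systematically restricts through an intermediate subgroup chain $G\supset F\supset H$, where $F$ is chosen so that both steps $G\supset F$ and $F\supset H$ are instances of cases already handled in Tables~\ref{table_Levi_sl} and~\ref{table_sym}. For example, in Case~\ref{sl_spsl} one takes $F=C_H\cdot(\SL_{2p}\times\SL_q)$, so $G\supset F$ is a Levi restriction (Table~\ref{table_Levi_sl}) and $F\supset H$ is the symmetric restriction $\SL_{2p}\supset\Sp_{2p}$ (Table~\ref{table_sym}); Cases~\ref{sl_spsp}--\ref{sl_spspsp} iterate this, and Case~\ref{sl_spin} goes through $\SO_8$. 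This is not a different argument so much as the concrete device that makes your ``explicit branching'' step uniform and short; without it you would be redoing ad hoc weight computations that the earlier tables already encode.

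Two small inaccuracies: no $\mathsf G_2$ subgroup and no $\SL_k\times\SL_m$ acting on a tensor product appear in Table~\ref{table_sl}; the list is exhausted by the $\Sp$/$\SL$-block subgroups of Cases~\ref{sl_spsl}--\ref{sl_spspsp} together with $\Spin_7\subset\SL_8$.
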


\begin{remark}
The equivalence of conditions~(\ref{sl1}) and~(\ref{sl2}) in Theorem~\ref{thm_sl} is implied by~\cite[Theorem~1.7]{AvP}.
The computations of the monoid $\Gamma_I(G,H)$ for each case in Table~\ref{table_sl} are carried out in~\S\,\ref{sect_proofs_C3}.
\end{remark}

\begin{remark}
Under the assumptions of Theorem~\ref{thm_sl}, for each triple $(G,H',I)$ in Table~\ref{table_sl}, all subgroups $K \subset G$ such that $H' \subset K \subset H$ and $K$ acts spherically on $X_I$ are also described in~\cite[Theorem~1.7]{AvP}. By Theorem~\ref{thm_criterion_spherical}, such subgroups $K$ are characterized by the property that the restrictions to $\Lambda_I^+(G) \oplus \Lambda^+(K)$ of all the indecomposable elements of $\Gamma_I(G,H)$ are linearly independent.
\end{remark}

\subsection{The tables}
\label{subsec_the_tables}

Before presenting our tables, we explain some notation and introduce several conventions used in the tables.

The symbol $\delta_i^j$ denotes the Kronecker delta, that is, $\delta_i^j = 1$ for $i = j$ and $\delta_i^j = 0$ otherwise.

Whenever an element $(\lambda; \mu)$ in the last column is followed by a parenthesis containing a certain condition (equality or inequality) on parameters, this means that $(\lambda; \mu)$ is an indecomposable element of $\Gamma_I(G,H)$ if and only if the condition is satisfied.

In Tables~\ref{table_Levi_sl}, \ref{table_sym}, and~\ref{table_sl}, if the group $H'$ is a product of one, two, or three factors each being either simple or~$\Spin_4$, then $\pi_i$ (resp. $\pi'_i$, $\pi''_i$) stands for the $i$th fundamental weight of the first (resp. second, third) factor of~$H'$ (for $\Spin_4$, the fundamental weights have numbers~$1$ and~$2$). For convenience in certain formulas, we put $\pi_0 = \pi'_0 = \pi''_0 = 0$. Likewise, if the first (resp. second, third) factor of~$H'$ is $\SL_m$ for some~$m$, we put by convention $\pi_m = 0$ (resp. $\pi'_m = 0$, $\pi''_m = 0$).

In each case of Table~\ref{table_Levi_non-sl}, we have $H = L_J$ for a subset $J \subset S$. In this situation, we choose $B_H = B_G \cap H$, $T_H = T_G$ and identify $\Lambda^+(H)$ with a submonoid of $\mathfrak X(T_G) = \ZZ \lbrace \pi_1, \ldots, \pi_s \rbrace$. Note that this identification implies $-\pi_j \in \Lambda^+(H)$ for all $j \in J$. For convenience in certain formulas, we put $\pi_0 = 0$.

In Cases~\ref{spin_odd} and~\ref{spin_even} of Table~\ref{table_sym}, the subgroup $H = \Spin_p \cdot \Spin_q$ is the preimage of the subgroup $\SO_p \times \SO_q \subset \SO_n$ under the covering homomorphism $\Spin_n \to \SO_n$. By convention, $\Spin_1 = \lbrace e \rbrace$.

In Cases~\ref{f4_b4}--\ref{e7_d6xa1} of Table~\ref{table_sym}, $H$ is a connected semisimple subgroup of~$G$ of the indicated type.

The following convention applies to Tables~\ref{table_Levi_sl},  \ref{table_sym}, and~\ref{table_sl}. When the Dynkin diagram of a factor (either simple or $\Spin_4$) of~$H'$ has nontrivial symmetries, the numbering of simple roots of this factor is not uniquely determined and hence can influence the explicit expressions for (some of) indecomposable elements of~$\Gamma_I(G,H)$. This is fixed by making the following additional conventions:
\begin{itemize}
\item
in Cases~\ref{sl_Levi_p1}--\ref{sl_Levi_pqr} of Table~\ref{table_Levi_sl} and Cases~\ref{sl_spsl}, \ref{sl_spslsl},~\ref{sl_spspsl} of Table~\ref{table_sl}: for each simple factor~$F$ of~$H'$, the highest weight of the unique nontrivial simple $F$-module in $\left. R_G(\pi_1) \right|_F$ is the first fundamental weight of~$F$;

\item
in Cases~\ref{spin_odd}, \ref{spin_even} of Table~\ref{table_sym} with $p = 4$: the first fundamental weight of the factor $\Spin_p$ of~$H$ is chosen in such a way that $\left. R_G(\pi_1) \right|_{\Spin_p}$ contains a submodule isomorphic to $R_{\Spin_p}(\pi_1)$; a similar convention applies to Case~\ref{spin_even} of Table~\ref{table_sym} with $q=4$;

\item
in Case~\ref{spin_even_n_pq_even} of Table~\ref{table_sym}: the last two fundamental weights of the factor $\Spin_q$ may be chosen arbitrarily and the $(p/2)$th fundamental weight of the factor $\Spin_p$ is chosen in such a way that $\left. R_G(\pi_n) \right|_H$ contains a submodule isomorphic to $R_H(\pi_{\frac p2} + \pi'_{\frac q2})$;

\item
in Case~\ref{e6_a5xa1} of Table~\ref{table_sym}: the first fundamental weight of the factor $\mathsf A_5$ of~$H$ is chosen in such a way that $\left. R_G(\pi_1) \right|_H$ contains a submodule isomorphic to $R_H(\pi_1 + \pi'_1)$;

\item
in Case~\ref{e7_d6xa1} of Table~\ref{table_sym}: the $6$th fundamental weight of the factor $\mathsf D_6$ is chosen in such a way that $\left. R_G(\pi_7) \right|_H$ contains a submodule isomorphic to~$R_H(\pi_6)$.
\end{itemize}

Recall from \S\S\,\ref{subsec_C1},~\ref{subsec_C3} that each case in Tables~\ref{table_Levi_sl} and~\ref{table_sl} comes together with a decomposition $\FF^n = V_1 \oplus \ldots \oplus V_t$ into a direct sum of simple $H$-modules. We assume in all cases that the $i$th factor of $H'$ acts irreducibly on $V_i$ and trivially on $V_j$ with $j \ne i$. Moreover, if the $i$th factor of $H'$ is of type $\SL$ or $\Sp$ then $V_i$ is the space of the tautological representation of this factor. In Case~\ref{sl_spin} of Table~\ref{table_sl}, the group $H = \Spin_7$ acts on~$\FF^8$ via its spinor representation. In all the cases, for each $i = 1, \ldots, t$ we denote by $\chi_i$ the character of~$C_H$ via which $C_H$ acts on~$V_i$.

Under the assumptions of the previous paragraph, the condition $H \subset \SL_n$ implies the relation
$\sum \limits_{i=1}^t (\dim V_i) \chi_i = 0$, therefore the explicit expression of each indecomposable element of $\Gamma_I(G,H)$ is not uniquely determined. To resolve this ambiguity, consider the group $\widehat G = \GL_n$ and put $\widehat H = C_{\widehat G} \cdot H$. For every $i = 1,\ldots, t$, denote by the same symbol $\chi_i$ the character of~$C_{\widehat H}$ via which $C_{\widehat H}$ acts on~$V_i$. For every $i = 1, \ldots, n-1$, let $\pi_i \in \Lambda^+(\widehat G)$ be the highest weight of the $\widehat G$-module $\wedge^i \FF^n$. Then for each case in Table~\ref{table_Levi_sl} and for Cases~\ref{sl_spsl}--\nolinebreak\ref{sl_spspsp} in Table~\ref{table_sl} the explicit expression of each indecomposable element $(\lambda; \mu)$ of $\Gamma_I(G,H)$ is chosen in such a way that $(\lambda; \mu)$ belongs to $\Gamma(\widehat G, \widehat H)$ when regarded as an element of $\Lambda^+(\widehat G) \oplus \Lambda^+(\widehat H)$. As a consequence, all formulas of the form~(\ref{eqn_restriction}) obtained for the group~$G$ in these cases remain valid after replacing $G$ with~$\widehat G$ and $H$ with~$\widehat H$.

As a final remark, we point out that in all tables the symbols $\pi_i$ are used in two different meanings: as the fundamental weights of~$G$ and as certain dominant weights of~$H$ (specified above for each table). We note however that this does not cause any ambiguity.

\begin{longtable}{|c|l|l|l|}

\caption{The Levi subgroup case: $G = \SL_n$}
\label{table_Levi_sl}
\\
\hline
No. & Conditions & Rank & Indecomposable elements of~$\Gamma_I(G,H)$ \endfirsthead

\caption{The Levi subgroup case: $G = \SL_n$ (continued)} \\
\hline
No. & Conditions & Rank & Indecomposable elements of~$\Gamma_I(G,H)$ \endhead

\hline
\newcase \label{sl_Levi_p1}
&
\renewcommand{\tabcolsep}{0pt}%
\begin{tabular}{l}
$\mathbf a {=} (p,1)$, \\
$I {=} S$
\end{tabular}
&
$2n{-}2$
&
\renewcommand{\tabcolsep}{0pt}%
\begin{tabular}{l}
$(\pi_k; \pi_k {+} k \chi_1)$ for $1 {\le} k {\le} n{-}1$; \\
$(\pi_k; \pi_{k-1} {+} (k{-}1)\chi_1 {+} \chi_2)$ for $1 {\le} k {\le} n {-} 1$
\end{tabular}
\\

\hline
\newcase \label{sl_Levi_pq_1i_part1}
&
\renewcommand{\tabcolsep}{0pt}%
\begin{tabular}{l}
$\mathbf a {=} (p,q)$, \\
$I {=} \lbrace 1, i \rbrace$, \\
$2 {\le} i {\le} p$
\end{tabular}
&
$2 {+} \min(2i{-}1, 2q)$
&
\renewcommand{\tabcolsep}{0pt}%
\begin{tabular}{l}
$(\pi_1; \pi_1 {+} \chi_1)$, $(\pi_1; \pi'_1{+}\chi_2)$, \\
$(\pi_i; \pi_{i-k} {+} \pi'_{k} {+} (i{-}k)\chi_1 + k\chi_2)$ \\
for $0 {\le} k {\le} \min(i,q)$, \\
$(\pi_1 {+} \pi_i; \pi_{i+1-k} {+} \pi'_{k} {+} (i{+}1{-}k)\chi_1 + k\chi_2)$ \\
for $2 {\le} k {\le} \min(i{-}1,q)$
\end{tabular}
\\

\hline
\newcase \label{sl_Levi_pq_1i_part2}
&
\renewcommand{\tabcolsep}{0pt}%
\begin{tabular}{l}
$\mathbf a {=} (p,q)$, \\
$I {=} \lbrace 1, i \rbrace$ \\
$i {>} p {\ge} 2$
\end{tabular}
&
$3 {+} 2n{-}2i$
&
\renewcommand{\tabcolsep}{0pt}%
\begin{tabular}{l}
$(\pi_1; \pi_1 {+} \chi_1)$, $(\pi_1; \pi'_1{+}\chi_2)$, \\
$(\pi_i; \pi_{i-k} {+} \pi'_{k} {+} (i{-}k)\chi_1 {+} k\chi_2)$ \\
for $i{-}p {\le} k {\le} q$, \\
$(\pi_1 {+} \pi_i; \pi_{i+1-k} {+} \pi'_{k} {+} (i{+}1{-}k)\chi_1 {+} k\chi_2)$ \\
for $i{+}1{-}p {\le} k {\le} q$
\end{tabular}
\\

\hline
\newcase &
\renewcommand{\tabcolsep}{0pt}%
\begin{tabular}{l}
$\mathbf a {=} (p,q)$, \\
$I {=} \lbrace i, i{+}1 \rbrace$, \\
$p {\ge} i{+}1$
\end{tabular}
&
$2{+}\min(2i{+}1,2q)$
&
\renewcommand{\tabcolsep}{0pt}%
\begin{tabular}{l}
$(\pi_i; \pi_{i-k} {+} \pi'_k {+} (i{-}k)\chi_1 {+} k\chi_2)$ \\
for $0 {\le} k {\le} \min(i,q)$, \\
$(\pi_{i+1}; \pi_{i+1-k} {+} \pi'_{k} {+} (i{+}1{-}k)\chi_1 {+} k\chi_2)$ \\
for $0 {\le} k {\le} \min(i{+}1, q)$
\end{tabular}\\

\hline
\newcase &
\renewcommand{\tabcolsep}{0pt}%
\begin{tabular}{l}
$\mathbf a {=} (p,2)$, \\
$I {=} \lbrace i, j \rbrace$, \\
$2 {\le} i {<} j$, \\
$j{-}i {\ge} 2$, \\
$n {-} j {\ge} 2$
\end{tabular}
& $7$
&
\renewcommand{\tabcolsep}{0pt}%
\begin{tabular}{l}
$(\pi_i; \pi_i {+} i\chi_1)$, $(\pi_i; \pi_{i-1} {+} \pi'_1 {+} (i{-}1)\chi_1 {+} \chi_2)$,\\
$(\pi_i; \pi_{i-2} {+} (i{-}2)\chi_1 {+} 2\chi_2)$, \\
$(\pi_j; \pi_j {+} j\chi_1)$, $(\pi_j; \pi_{j-1} {+} \pi'_1 {+} (j{-}1)\chi_1 {+} \chi_2)$,\\
$(\pi_j; \pi_{j-2} {+} (j{-}2)\chi_1 {+} 2\chi_2)$, \\
$(\pi_i {+} \pi_j; \pi_{i-1} {+} \pi_{j-1} {+} (i{-}1)\chi_1 {+} (j{-}1)\chi_2 {+} 2\chi_2)$
\end{tabular}\\

\hline
\newcase \label{sl_Levi_pq1_part1}
&
\renewcommand{\tabcolsep}{0pt}%
\begin{tabular}{l}
$\mathbf a {=} (p,q,1)$, \\
$I {=} \lbrace i \rbrace$, \\
$1 {\le} i {\le} p$
\end{tabular}
&
$2 {+} \min(2i{-}1,2q)$
&
\renewcommand{\tabcolsep}{0pt}%
\begin{tabular}{l}
$(\pi_i; \pi_{i-1-k} {+} \pi'_{k} + (i{-}1{-}k)\chi_1 {+} k\chi_2 {+} \chi_3)$ \\
for $0 {\le} k {\le} \min(i{-}1,q)$, \\
$(\pi_i; \pi_{i-k} {+} \pi'_{k} {+} (i{-}k)\chi_1 {+} k\chi_2)$ \\
for $0 {\le} k {\le} \min(i,q)$
\end{tabular}\\

\hline
\newcase &
\renewcommand{\tabcolsep}{0pt}%
\begin{tabular}{l}
$\mathbf a {=} (p,q,1)$, \\
$I {=} \lbrace i \rbrace$, \\
$i {>} p$
\end{tabular}
&
$1 {+} 2n {-} 2i$
&
\renewcommand{\tabcolsep}{0pt}%
\begin{tabular}{l}
$(\pi_i; \pi_{i-1-k} {+} \pi'_{k} {+} (i{-}1{-}k)\chi_1 {+} k\chi_2 {+} \chi_3)$ \\
for $i{-}1{-}p {\le} k {\le} q{-}1$, \\
$(\pi_i; \pi_{i-k} {+} \pi'_{k} {+} (i{-}k)\chi_1 {+} k\chi_2)$ \\
for $i{-}p {\le} k {\le} q{-}1$
\end{tabular}\\

\hline
\newcase \label{sl_Levi_pqr}
&
\renewcommand{\tabcolsep}{0pt}%
\begin{tabular}{l}
$\mathbf a {=} (p,q,r)$, \\
$I {=} \lbrace 2 \rbrace$, \\
$r {\ge} 2$
\end{tabular}
& $6$
&
\renewcommand{\tabcolsep}{0pt}%
\begin{tabular}{l}
$(\pi_2; \pi_2 {+} 2\chi_1)$, $(\pi_2; \pi'_2 {+} 2\chi_2)$,
$(\pi_2; \pi''_2 {+} 2\chi_3)$, \\
$(\pi_2; \pi_1 {+} \pi'_1 {+} \chi_1 {+} \chi_2)$, 
$(\pi_2; \pi_1 {+} \pi''_1 {+} \chi_1 {+} \chi_3)$, \\
$(\pi_2; \pi'_1 {+} \pi''_1 {+} \chi_2 {+} \chi_3)$
\end{tabular}\\

\hline
\newcase &
\renewcommand{\tabcolsep}{0pt}%
\begin{tabular}{l}
$\mathbf a {=} (1,1,\ldots,1)$, \\
$I {=} \lbrace 1 \rbrace$
\end{tabular}
& $n$
&
$(\pi_1; \chi_k)$ for $1 {\le} k {\le} n$
\\

\hline

\end{longtable}


\begin{longtable}{|c|l|l|l|}

\caption{The Levi subgroup case: $G \not\simeq \SL_n$}
\label{table_Levi_non-sl}
\\
\hline
No. & Conditions & Rank & Indecomposable elements of~$\Gamma_I(G,H)$ \endfirsthead

\caption{The Levi subgroup case: $G \not\simeq \SL_n$ (continued)} \\
\hline
No. & Conditions & Rank & Indecomposable elements of~$\Gamma_I(G,H)$ \endhead

\hline

\newcase & \multicolumn{3}{|c|}{$G = \Sp_{2n}$, $n \ge 2$}\\

\hline
\no &
\renewcommand{\tabcolsep}{0pt}%
\begin{tabular}{l}
$J {=} \lbrace 1 \rbrace$, \\
$I {=} \lbrace i \rbrace$ \\
\end{tabular}
& $4 {-} \delta_i^1 {-} \delta_i^n$
&
\renewcommand{\tabcolsep}{0pt}%
\begin{tabular}{l}
$(\pi_i; \pi_i)$, $(\pi_i; \pi_{i-1} {-} \pi_1)$, \\
$(\pi_i; \pi_{i+1} {-} \pi_1)$ ($i {\le} n{-}1$), 
$(\pi_i; \pi_i {-} 2\pi_1)$ ($i {\ge} 2$)
\end{tabular}\\

\hline
\no &
\renewcommand{\tabcolsep}{0pt}%
\begin{tabular}{l}
$J {=} \lbrace j \rbrace$, \\
$I {=} \lbrace 1 \rbrace$ \\
\end{tabular}
& $4 {-} \delta_j^1 {-} \delta_j^n$
&
\renewcommand{\tabcolsep}{0pt}%
\begin{tabular}{l}
$(\pi_1; \pi_1)$, $(\pi_1; \pi_{j-1} {-} \pi_j)$, \\
$(\pi_1; \pi_{j+1} {-} \pi_j)$ ($j {\le} n{-}1$), 
$(2\pi_1; 0)$ ($j {\ge} 2$)
\end{tabular}\\

\hline
\no &
\renewcommand{\tabcolsep}{0pt}%
\begin{tabular}{l}
$J {=} \lbrace n \rbrace$, \\
$I {=} \lbrace n \rbrace$
\end{tabular}
& $1 {+} n$
&
\renewcommand{\tabcolsep}{0pt}%
\begin{tabular}{l}
$(\pi_n; \pi_n)$, $(\pi_n; - \pi_n)$, \\
$(\pi_n; 2\pi_k {-} \pi_n)$ for $1 {\le} k {\le} n{-}1$
\end{tabular}\\

\hline
\newcase & \multicolumn{3}{|c|}{$G = \Spin_{2n+1}$, $n \ge 3$}\\

\hline
\no &
\renewcommand{\tabcolsep}{0pt}%
\begin{tabular}{l}
$J {=} \lbrace 1 \rbrace$, \\
$I {=} \lbrace i \rbrace$ \\
\end{tabular}
& $4 {-} \delta_i^1 {-} \delta_i^n$
&
\renewcommand{\tabcolsep}{0pt}%
\begin{tabular}{l}
$(\pi_i; \pi_i)$, 
$(\pi_i; \pi_{i-1} {-} \pi_1)$ ($i {\le} n{-}1$), 
$(2\pi_n; \pi_{n-1} {-} \pi_1)$ ($i {=} n$), \\
$(\pi_i; \pi_{i+1} {-} \pi_1)$ ($i {\le} n {-} 2$), 
$(\pi_{n-1}; 2\pi_n {-} \pi_1)$ ($i {=} n{-}1$), \\
$(\pi_i; \pi_i {-} 2\pi_1)$ ($2 {\le} i {\le} n{-}1$), 
$(\pi_n; \pi_n {-} \pi_1)$ ($i {=} n$)
\end{tabular}\\

\hline
\no &
\renewcommand{\tabcolsep}{0pt}%
\begin{tabular}{l}
$J {=} \lbrace j \rbrace$, \\
$I {=} \lbrace 1 \rbrace$ \\
\end{tabular}
& $4 {-} \delta_j^1 {-} \delta_j^n$
&
\renewcommand{\tabcolsep}{0pt}%
\begin{tabular}{l}
$(\pi_1; \pi_1)$, 
$(\pi_1; \pi_{j-1} {-} \pi_j)$ ($j {\le} n{-}1$), \\
$(\pi_1; \pi_{n-1} {-} 2\pi_n)$ ($j {=} n$), \\
$(\pi_1; \pi_{j+1} {-} \pi_j)$ ($j {\le} n{-}2$), 
$(\pi_1; 2\pi_n {-} \pi_{n-1})$ ($j {=} n{-}1$), \\
$(\pi_1; 0)$ ($j {=} n$), 
$(2\pi_1; 0)$ ($2 {\le} j {\le} n{-}1$)
\end{tabular}\\

\hline
\no &
\renewcommand{\tabcolsep}{0pt}%
\begin{tabular}{l}
$J {=} \lbrace n \rbrace$, \\
$I {=} \lbrace n \rbrace$
\end{tabular}
& $1 {+} n$
&
\renewcommand{\tabcolsep}{0pt}%
\begin{tabular}{l}
$(\pi_n; \pi_n)$, $(\pi_n; - \pi_n)$, \\
$(\pi_n; \pi_k {-} \pi_n)$ for $1 {\le} k {\le} n{-}1$
\end{tabular}\\

\hline
\newcase & \multicolumn{3}{|c|}{$G = \Spin_{2n}$, $n \ge 4$}\\

\hline
\no
&
\renewcommand{\tabcolsep}{0pt}%
\begin{tabular}{l}
$J {=} \lbrace 1 \rbrace$,\\
$I {=} \lbrace i, n \rbrace$,\\
$1 {\le} i {\le} n{-}2$
\end{tabular}
&
$6 - \delta_i^1$
&
\renewcommand{\tabcolsep}{0pt}%
\begin{tabular}{l}
$(\pi_i; \pi_i)$, $(\pi_i; \pi_{i-1} {-} \pi_1)$, 
$(\pi_i; \pi_{i+1} {-} \pi_1)$ ($i {\le} n {-} 3$), \\
$(\pi_{n-2}; \pi_{n-1} {+} \pi_n {-} \pi_1)$ ($i {=} n{-}2$), 
$(\pi_i; \pi_i {-} 2\pi_1)$ $(i {\ge} 2)$,\\
$(\pi_n; \pi_n)$, $(\pi_n; \pi_{n-1} {-} \pi_1)$
\end{tabular}
\\

\hline
\no
&
\renewcommand{\tabcolsep}{0pt}%
\begin{tabular}{l}
$J {=} \lbrace 1 \rbrace$,\\
$I {=} \lbrace n {-} 1, n \rbrace$
\end{tabular}
&
$5$
&
\renewcommand{\tabcolsep}{0pt}%
\begin{tabular}{l}
$(\pi_{n-1}; \pi_{n-1})$, $(\pi_{n-1}; \pi_n {-} \pi_1)$, \\
$(\pi_n; \pi_n)$, $(\pi_n; \pi_{n-1} {-} \pi_1)$, 
$(\pi_{n-1} {+} \pi_n; \pi_{n-2} {-} \pi_1)$
\end{tabular}
\\

\hline
\no
&
\renewcommand{\tabcolsep}{0pt}%
\begin{tabular}{l}
$J {=} \lbrace 2 \rbrace$,\\
$I {=} \lbrace n \rbrace$
\end{tabular}
&
$4$
&
\renewcommand{\tabcolsep}{0pt}%
\begin{tabular}{l}
$(\pi_n; \pi_n)$, $(\pi_n; \pi_n {-} \pi_2)$,\\
$(\pi_n; \pi_1 {+} \pi_{n-1} {-} \pi_2)$, $(2\pi_n; \pi_{n-2} {-} \pi_2)$
\end{tabular}
\\

\hline
\no
&
\renewcommand{\tabcolsep}{0pt}%
\begin{tabular}{l}
$J {=} \lbrace 3 \rbrace$,\\
$I {=} \lbrace n \rbrace$,\\
$n {\ge} 5$
\end{tabular}
&
$7 {-} \delta_n^5$
&
\renewcommand{\tabcolsep}{0pt}%
\begin{tabular}{l}
$(\pi_n; \pi_n)$, $(\pi_n; \pi_1 {+} \pi_n {-} \pi_3)$, 
$(\pi_n; \pi_2 {+} \pi_{n-1} {-} \pi_3)$, \\
$(\pi_n; \pi_{n-1} {-} \pi_3)$, 
$(2\pi_n; \pi_1 {+} \pi_{n-2} {-} \pi_3)$, $(2\pi_n; \pi_{n-3} {-} \pi_3)$,\\
$(2\pi_n; \pi_2 {+} \pi_{n-2} {-} 2\pi_3)$ ($n {\ge} 6$)
\end{tabular}
\\

\hline
\no
&
\renewcommand{\tabcolsep}{0pt}%
\begin{tabular}{l}
$J {=} \lbrace j \rbrace$,\\
$I {=} \lbrace 1 \rbrace$,\\
$1 {\le} j {\le} n {-} 2$
\end{tabular}
&
$4 {-} \delta_j^1$
&
\renewcommand{\tabcolsep}{0pt}%
\begin{tabular}{l}
$(\pi_1; \pi_1)$, $(\pi_1; \pi_{j-1} {-} \pi_j)$, 
$(\pi_1; \pi_{j+1} {-} \pi_j)$ ($j {\le} n {-} 3$), \\
$(\pi_1; \pi_{n-1} {+} \pi_n {-} \pi_{n-2})$ ($j {=} n {-} 2$), \\
$(2\pi_1; 0)$ ($j{\ge}2$)
\end{tabular}
\\

\hline
\no
&
\renewcommand{\tabcolsep}{0pt}%
\begin{tabular}{l}
$J {=} \lbrace n \rbrace$,\\
$I {=} \lbrace 1, 2 \rbrace$\\
\end{tabular}
&
$6$
&
\renewcommand{\tabcolsep}{0pt}%
\begin{tabular}{l}
$(\pi_1; \pi_1)$, $(\pi_1; \pi_{n-1} {-} \pi_n)$, 
$(\pi_2; \pi_2)$, $(\pi_2; 0)$, \\
$(\pi_2; \pi_1 {+} \pi_{n-1} {-} \pi_n)$,
$(\pi_2; \pi_{n-2} {-} 2\pi_n)$
\end{tabular}
\\

\hline
\no
&
\renewcommand{\tabcolsep}{0pt}%
\begin{tabular}{l}
$J {=} \lbrace n \rbrace$,\\
$I {=} \lbrace 3 \rbrace$,\\
$n {\ge} 5$
\end{tabular}
&
$7 {-} \delta_n^5$
&
\renewcommand{\tabcolsep}{0pt}%
\begin{tabular}{l}
$(\pi_3; \pi_3)$, $(\pi_3; \pi_1)$, 
$(\pi_3; \pi_2 {+} \pi_{n-1} {-} \pi_n)$, $(\pi_3; \pi_{n-1} {-} \pi_n)$, \\
$(\pi_3; \pi_1 {+} \pi_{n-2} {-} 2\pi_n)$, $(\pi_3; \pi_{n-3} {-} 2\pi_n)$, \\
$(2\pi_3; \pi_2 {+} \pi_{n-2} {-} 2\pi_n)$ ($n {\ge} 6$)
\end{tabular}
\\

\hline
\no
&
\renewcommand{\tabcolsep}{0pt}%
\begin{tabular}{l}
$J {=} \lbrace n \rbrace$,\\
$I {=} \lbrace 1, n{-}1 \rbrace$
\end{tabular}
&
$n {+} 1$
&
\renewcommand{\tabcolsep}{0pt}%
\begin{tabular}{l}
$(\pi_1; \pi_1)$, $(\pi_1; \pi_{n-1} {-} \pi_n)$, 
$(\pi_{n-1}; \pi_{n-1})$, \\
$(\pi_{n-1}; \pi_{n-2k-1} {-} \pi_n)$ for $1 {\le} k {\le} [\frac{n-1}2]$, \\
$(\pi_1 {+} \pi_{n-1}; \pi_{n - 2k} {-} \pi_n)$ for $1 {\le} k {\le} [\frac{n-2}2]$
\end{tabular}
\\

\hline
\no
&
\renewcommand{\tabcolsep}{0pt}%
\begin{tabular}{l}
$J {=} \lbrace n \rbrace$,\\
$I {=} \lbrace 1, n \rbrace$
\end{tabular}
&
$n{+}1$
&
\renewcommand{\tabcolsep}{0pt}%
\begin{tabular}{l}
$(\pi_1; \pi_1)$, $(\pi_1; \pi_{n-1} {-} \pi_n)$, 
$(\pi_n; \pi_n)$, \\
$(\pi_n; \pi_{n-2k} {-} \pi_n)$ for $1 {\le} k {\le} [\frac{n}2]$,\\
$(\pi_1 {+} \pi_n; \pi_{n-2k-1} {-} \pi_n)$ for $1 {\le} k {\le} [\frac{n-3}2]$
\end{tabular}
\\

\hline
\no
&
\renewcommand{\tabcolsep}{0pt}%
\begin{tabular}{l}
$J {=} \lbrace n \rbrace$,\\
$I {=} \lbrace n {-} 1, n \rbrace$
\end{tabular}
&
$n{+}1$
&
\renewcommand{\tabcolsep}{0pt}%
\begin{tabular}{l}
$(\pi_{n-1}; \pi_{n-1})$, $(\pi_n; \pi_n)$,\\
$(\pi_{n-1}; \pi_{n-2k-1} {-} \pi_n)$ for $1 {\le} k {\le} [\frac{n-1}2]$,\\
$(\pi_n; \pi_{n-2k} {-} \pi_n)$ for $1 {\le} k {\le} [\frac{n}2]$
\end{tabular}
\\

\hline
\no
&
\renewcommand{\tabcolsep}{0pt}%
\begin{tabular}{l}
$J {=} \lbrace 1, 2 \rbrace$,\\
$I {=} \lbrace n \rbrace$
\end{tabular}
&
$5$
&
\renewcommand{\tabcolsep}{0pt}%
\begin{tabular}{l}
$(\pi_n; \pi_n)$, $(\pi_n; \pi_n {-} \pi_2)$, 
$(\pi_n; \pi_{n-1} {-} \pi_1)$, \\
$(\pi_n; \pi_1 {+} \pi_{n-1} {-} \pi_2)$,
$(2\pi_n; \pi_{n-2} {-} \pi_2)$
\end{tabular}
\\

\hline
\no
&
\renewcommand{\tabcolsep}{0pt}%
\begin{tabular}{l}
$J {=} \lbrace 1, n{-}1 \rbrace$,\\
$I {=} \lbrace n \rbrace$
\end{tabular}
&
$n$
&
\renewcommand{\tabcolsep}{0pt}%
\begin{tabular}{l}
$(\pi_n; \pi_n)$, $(\pi_n; \pi_{n-1} {-} \pi_1)$,\\
$(\pi_n; \pi_{n-2k-1} {-} \pi_{n-1})$ for $1 {\le} k {\le} [\frac{n-1}2]$,\\
$(\pi_n; \pi_{n-2k} {-} \pi_1 {-} \pi_{n-1})$ for $1 {\le} k {\le} [\frac{n-2}2]$
\end{tabular}
\\

\hline
\no
&
\renewcommand{\tabcolsep}{0pt}%
\begin{tabular}{l}
$J {=} \lbrace 1, n \rbrace$,\\
$I {=} \lbrace n \rbrace$
\end{tabular}
&
$n$
&
\renewcommand{\tabcolsep}{0pt}%
\begin{tabular}{l}
$(\pi_n; \pi_n)$, $(\pi_n; \pi_{n-1} {-} \pi_1)$,\\
$(\pi_n; \pi_{n-2k} {-} \pi_n)$ for $1 {\le} k {\le} [\frac{n}2]$,\\
$(\pi_n; \pi_{n-2k-1} {-} \pi_1 {-} \pi_n)$ for $1 {\le} k {\le} [\frac{n-3}2]$
\end{tabular}
\\

\hline
\no
&
\renewcommand{\tabcolsep}{0pt}%
\begin{tabular}{l}
$J {=} \lbrace j, n \rbrace$,\\
$I {=} \lbrace 1 \rbrace$,\\
$1 {\le} j {\le} n {-} 2$
\end{tabular}
&
$5 {-} \delta_j^1$
&
\renewcommand{\tabcolsep}{0pt}%
\begin{tabular}{l}
$(\pi_1; \pi_1)$, $(\pi_1; \pi_{j-1} {-} \pi_j)$, 
$(\pi_1; \pi_{j+1} {-} \pi_j)$ ($j {\le} n{-}3$), \\
$(\pi_1; \pi_{n-1} {+} \pi_n {-} \pi_{n-2})$ ($j {=} n{-}2$), 
$(\pi_1; \pi_{n-1} {-} \pi_n)$, \\
$(2\pi_1; 0)$ ($j {\ge} 2$)
\end{tabular}
\\

\hline
\no
&
\renewcommand{\tabcolsep}{0pt}%
\begin{tabular}{l}
$J {=} \lbrace n {-} 1, n \rbrace$,\\
$I {=} \lbrace 1 \rbrace$
\end{tabular}
&
$4$
&
\renewcommand{\tabcolsep}{0pt}%
\begin{tabular}{l}
$(\pi_1; \pi_1)$, $(\pi_1; \pi_{n-2} {-} \pi_{n-1} {-} \pi_n)$,\\
$(\pi_1; \pi_{n-1} {-} \pi_n)$, $(\pi_1; \pi_n {-} \pi_{n-1})$
\end{tabular}
\\

\hline
\no
&
\renewcommand{\tabcolsep}{0pt}%
\begin{tabular}{l}
$J {=} \lbrace n{-}1, n \rbrace$,\\
$I {=} \lbrace n \rbrace$
\end{tabular}
&
$n$
&
\renewcommand{\tabcolsep}{0pt}%
\begin{tabular}{l}
$(\pi_n; \pi_n)$,\\
$(\pi_n; \pi_{n-2k-1} {-} \pi_{n-1})$ for $1 {\le} k {\le} [\frac{n-1}2]$,\\
$(\pi_n; \pi_{n-2k} {-} \pi_n)$ for $1 {\le} k {\le} [\frac{n}2]$
\end{tabular}
\\

\hline
\newcase & \multicolumn{3}{|c|}{$G = \mathsf E_6$}\\

\hline
\no
&
\renewcommand{\tabcolsep}{0pt}%
\begin{tabular}{l}
$J {=} \lbrace 1 \rbrace$,\\
$I {=} \lbrace 1, 6 \rbrace$
\end{tabular}
&
$7$
&
\renewcommand{\tabcolsep}{0pt}%
\begin{tabular}{l}
$(\pi_1; \pi_1)$, $(\pi_1; \pi_3 {-} \pi_1)$,
$(\pi_1; \pi_6 {-} \pi_1)$, 
$(\pi_6; \pi_6)$, 
$(\pi_6; - \pi_1)$, \\
$(\pi_6; \pi_2 {-} \pi_1)$, 
$(\pi_1 {+} \pi_6; \pi_5 {-} \pi_1)$
\end{tabular}
\\

\hline
\no
&
\renewcommand{\tabcolsep}{0pt}%
\begin{tabular}{l}
$J {=} \lbrace 1 \rbrace$,\\
$I {=} \lbrace 2 \rbrace$
\end{tabular}
&
$4$
&
\renewcommand{\tabcolsep}{0pt}%
\begin{tabular}{l}
$(\pi_2; \pi_2)$, $(\pi_2; 0)$, 
$(\pi_2; \pi_5 {-} \pi_1)$,\\
$(\pi_3; \pi_3 {-} 2\pi_1)$
\end{tabular}
\\

\hline
\no
&
\renewcommand{\tabcolsep}{0pt}%
\begin{tabular}{l}
$J {=} \lbrace 1 \rbrace$,\\
$I {=} \lbrace 3 \rbrace$
\end{tabular}
&
$6$
&
\renewcommand{\tabcolsep}{0pt}%
\begin{tabular}{l}
$(\pi_3; \pi_3)$, $(\pi_3; \pi_6)$, 
$(\pi_3; \pi_2 {-} \pi_1)$, 
$(\pi_3; \pi_4 {-} \pi_1)$, \\
$(\pi_3; \pi_3 {+} \pi_6 {-} 2\pi_1)$, $(\pi_3; \pi_5 {-} 2\pi_1)$
\end{tabular}
\\

\hline
\no
&
\renewcommand{\tabcolsep}{0pt}%
\begin{tabular}{l}
$J {=} \lbrace 1 \rbrace$,\\
$I {=} \lbrace 5 \rbrace$
\end{tabular}
&
$6$
&
\renewcommand{\tabcolsep}{0pt}%
\begin{tabular}{l}
$(\pi_5; \pi_5)$, $(\pi_5; \pi_3 {-} \pi_1)$, 
$(\pi_5; \pi_6 - \pi_1)$, 
$(\pi_5; \pi_4 {+} \pi_5 {-} \pi_1)$, \\
$(\pi_5; \pi_2 {-} 2\pi_1)$, $(\pi_5; \pi_4 {-} 2\pi_1)$
\end{tabular}
\\

\hline
\no
&
\renewcommand{\tabcolsep}{0pt}%
\begin{tabular}{l}
$J {=} \lbrace 2 \rbrace$,\\
$I {=}\lbrace 1 \rbrace$
\end{tabular}
&
$4$
&
\renewcommand{\tabcolsep}{0pt}%
\begin{tabular}{l}
$(\pi_1; \pi_1)$, $(\pi_1; \pi_1 {-} \pi_2)$, 
$(\pi_1; \pi_5 {-} \pi_2)$, \\
$(2\pi_1; \pi_3 {-} \pi_2)$
\end{tabular}
\\

\hline
\no
&
\renewcommand{\tabcolsep}{0pt}%
\begin{tabular}{l}
$J {=} \lbrace 3 \rbrace$,\\
$I {=} \lbrace 1 \rbrace$
\end{tabular}
&
$6$
&
\renewcommand{\tabcolsep}{0pt}%
\begin{tabular}{l}
$(\pi_1; \pi_1)$, $(\pi_1; \pi_1 {+} \pi_6 {-} \pi_3)$, 
$(\pi_1; \pi_2 {-} \pi_3)$, $(\pi_1; \pi_4 {-} \pi_3)$, \\
$(2\pi_1; \pi_5 {-} \pi_3)$, $(2\pi_1; \pi_6)$
\end{tabular}
\\

\hline
\no
&
\renewcommand{\tabcolsep}{0pt}%
\begin{tabular}{l}
$J {=} \lbrace 5 \rbrace$,\\
$I {=} \lbrace 1 \rbrace$
\end{tabular}
&
$6$
&
\renewcommand{\tabcolsep}{0pt}%
\begin{tabular}{l}
$(\pi_1; \pi_1)$, $(\pi_1; \pi_3 {-} \pi_5)$, 
$(\pi_1; \pi_4)$, $(\pi_1; \pi_6 {-} \pi_5)$, \\
$(2\pi_1; \pi_2 {-} \pi_5)$, $(2\pi_1; \pi_4 {-} \pi_5)$
\end{tabular}
\\

\hline
\no
&
\renewcommand{\tabcolsep}{0pt}%
\begin{tabular}{l}
$J {=} \lbrace 6 \rbrace$,\\
$I {=} \lbrace 1 \rbrace$
\end{tabular}
&
$3$
&
$(\pi_1; \pi_1)$, $(\pi_1; \pi_2 {-} \pi_6)$, $(\pi_1; {-} \pi_6)$
\\

\hline
\no
&
\renewcommand{\tabcolsep}{0pt}%
\begin{tabular}{l}
$J {=} \lbrace 1, 6 \rbrace$,\\
$I {=} \lbrace 1 \rbrace$
\end{tabular}
&
$6$
&
\renewcommand{\tabcolsep}{0pt}%
\begin{tabular}{l}
$(\pi_1; \pi_1)$, $(\pi_1; \pi_3 {-} \pi_1)$, 
$(\pi_1; \pi_6 {-} \pi_1)$, $(\pi_1; - \pi_6)$, \\
$(\pi_1; \pi_2 {-} \pi_6)$, $(\pi_1; \pi_5 {-} \pi_1 {-} \pi_6)$
\end{tabular}
\\

\hline
\newcase & \multicolumn{3}{|c|}{$G = \mathsf E_7$}\\

\hline
\no
&
\renewcommand{\tabcolsep}{0pt}%
\begin{tabular}{l}
$J {=} \lbrace 1 \rbrace$,\\
$I {=} \lbrace 7 \rbrace$
\end{tabular}
&
$4$
&
\renewcommand{\tabcolsep}{0pt}%
\begin{tabular}{l}
$(\pi_7; \pi_7)$, $(\pi_7; \pi_7 {-} \pi_1)$, 
$(\pi_7; \pi_2 {-} \pi_1)$, \\
$(2\pi_7; \pi_6 {-} \pi_1)$
\end{tabular}
\\

\hline
\no
&
\renewcommand{\tabcolsep}{0pt}%
\begin{tabular}{l}
$J {=} \lbrace 2 \rbrace$,\\
$I {=} \lbrace 7 \rbrace$
\end{tabular}
&
$7$
&
\renewcommand{\tabcolsep}{0pt}%
\begin{tabular}{l}
$(\pi_7; \pi_7)$, $(\pi_7; \pi_6 {-} \pi_2)$, 
$(\pi_7; \pi_3 {-} \pi_2)$, 
$(\pi_7; \pi_1 {-} \pi_2)$, \\
$(2\pi_7; \pi_5 {-} \pi_2)$, $(2\pi_7; 0)$, $(2\pi_7; \pi_4 {-} 2\pi_2)$
\end{tabular}
\\

\hline
\no
&
\renewcommand{\tabcolsep}{0pt}%
\begin{tabular}{l}
$J {=} \lbrace 7 \rbrace$,\\
$I {=} \lbrace 1 \rbrace$
\end{tabular}
&
$4$
&
\renewcommand{\tabcolsep}{0pt}%
\begin{tabular}{l}
$(\pi_1; \pi_1)$, $(\pi_1; 0)$,\\
$(\pi_1; \pi_2 {-} \pi_7)$, $(\pi_1; \pi_6 {-} 2\pi_7)$
\end{tabular}
\\

\hline
\no
&
\renewcommand{\tabcolsep}{0pt}%
\begin{tabular}{l}
$J {=} \lbrace 7 \rbrace$,\\
$I {=} \lbrace 2 \rbrace$
\end{tabular}
&
$7$
&
\renewcommand{\tabcolsep}{0pt}%
\begin{tabular}{l}
$(\pi_2; \pi_2)$, $(\pi_2; \pi_6 {-} \pi_7)$, $(\pi_2; \pi_3 {-} \pi_7)$, 
$(\pi_2; \pi_1 {-} \pi_7)$, \\
$(\pi_2; \pi_5 {-} 2\pi_7)$, $(\pi_2; \pi_2 {-} 2\pi_7)$, 
$(2\pi_2; \pi_4 {-} 2\pi_7)$
\end{tabular}
\\

\hline
\no
&
\renewcommand{\tabcolsep}{0pt}%
\begin{tabular}{l}
$J {=} \lbrace 7 \rbrace$,\\
$I {=} \lbrace 7 \rbrace$
\end{tabular}
&
$4$
&
\renewcommand{\tabcolsep}{0pt}%
\begin{tabular}{l}
$(\pi_7; \pi_7)$, $(\pi_7; -\pi_7)$,\\
$(\pi_7; \pi_6 {-} \pi_1)$, $(\pi_7; \pi_1 {-} \pi_7)$
\end{tabular}
\\

\hline
\end{longtable}


\begin{longtable}{|c|l|l|l|}

\caption{The non-Levi symmetric subgroup case}
\label{table_sym}
\\
\hline
No. & Conditions & Rank & Indecomposable elements of~$\Gamma_I(G,H)$ \endfirsthead

\caption{The non-Levi symmetric subgroup case (continued)} \\
\hline
No. & Conditions & Rank & Indecomposable elements of~$\Gamma_I(G,H)$ \endhead

\hline
\newcase \label{sl_so}
& \multicolumn{3}{|c|}{$G = \SL_{n}$, $H = \SO_{n}$, $n \ge 3$}\\

\hline
\no \label{sl_so_odd_i}
&
\renewcommand{\tabcolsep}{0pt}%
\begin{tabular}{l}
$n = 2l{+}1$, \\
$I {=} \lbrace i \rbrace$, \\
$i {\le} l$
\end{tabular}
&
$i {+} 1$
&
\renewcommand{\tabcolsep}{0pt}%
\begin{tabular}{l}
$(\pi_i; \pi_i)$ ($i {\le} l{-}1$),
$(\pi_l; 2\pi_l)$ ($i{=}l$),\\
$(2\pi_i; 2\pi_k)$ for $0 {\le} k {\le} i{-}1$
\end{tabular}
\\

\hline
\no \label{sl_so_even_i}
&
\renewcommand{\tabcolsep}{0pt}%
\begin{tabular}{l}
$n = 2l$, \\
$I {=} \lbrace i \rbrace$, \\
$i {\le} l$
\end{tabular}
&
$i {+} 1$
&
\renewcommand{\tabcolsep}{0pt}%
\begin{tabular}{l}
$(\pi_i; \pi_i)$ ($i {\le} l{-}2$),
$(\pi_{l-1}; \pi_{l-1} {+} \pi_l)$ ($i {=} l{-}1$),\\
$(\pi_l; 2\pi_{l-1})$ ($i{=}l$),
$(\pi_l; 2\pi_l)$ ($i{=}l$),\\
$(2\pi_i; 2\pi_k)$ for $0 {\le} k {\le} \min(i{-}1,l{-}2)$
\end{tabular}
\\

\hline
\newcase \label{sl_sp}
& \multicolumn{3}{|c|}{$G = \SL_{2n}$, $H = \Sp_{2n}$, $n \ge 2$}\\

\hline
\no \label{sl_sp_1ipart1}
&
\renewcommand{\tabcolsep}{0pt}%
\begin{tabular}{l}
$I{=} \lbrace 1,i \rbrace$,\\
$2 {\le} i {\le} n$
\end{tabular}
&
$i{+}1$
&
\renewcommand{\tabcolsep}{0pt}%
\begin{tabular}{l}
$(\pi_1; \pi_1)$, $(\pi_i; \pi_{i-2k})$ for $0 {\le} k {\le} [\frac{i}2]$,\\
$(\pi_1 {+} \pi_i; \pi_{i-1-2k})$ for $0 {\le} k {\le} [\frac{i-3}2]$
\end{tabular}
\\

\hline
\no \label{sl_sp_1ipart2}
&
\renewcommand{\tabcolsep}{0pt}%
\begin{tabular}{l}
$I{=} \lbrace 1,i \rbrace$,\\
$i {\ge} n{+}1$
\end{tabular}
&
$2n{-}i{+}2$
&
\renewcommand{\tabcolsep}{0pt}%
\begin{tabular}{l}
$(\pi_1; \pi_1)$, $(\pi_i; \pi_{2n-i-2k})$ for $0 {\le} k {\le} [\frac{2n-i}2]$,\\
$(\pi_1 {+} \pi_i; \pi_{2n-i+1-2k})$ for $0 {\le} k {\le} [\frac{2n-i-1}2]$
\end{tabular}
\\

\hline
\no \label{sl_sp_ii+1}
&
\renewcommand{\tabcolsep}{0pt}%
\begin{tabular}{l}
$I {=} \lbrace i, i{+}1 \rbrace$,\\
$i {\le} n{-}1$
\end{tabular}
&
$i {+} 2$
&
\renewcommand{\tabcolsep}{0pt}%
\begin{tabular}{l}
$(\pi_i; \pi_{i-2k})$ for $0 {\le} k {\le} [\frac{i}2]$,\\
$(\pi_{i+1}; \pi_{i+1-2k})$ for $0 {\le} k {\le} [\frac{i+1}2]$
\end{tabular}
\\

\hline
\no \label{sl_sp_123}
&
$I {=} \lbrace 1,2,3 \rbrace$
&
$6 {-} \delta_n^2$
&
\renewcommand{\tabcolsep}{0pt}%
\begin{tabular}{l}
$(\pi_1; \pi_1)$, $(\pi_2; \pi_2)$, $(\pi_2; 0)$,\\
$(\pi_3; \pi_3)$ ($n {\ge} 3$), $(\pi_3; \pi_1)$,
$(\pi_1 {+} \pi_3; \pi_2)$
\end{tabular}
\\

\hline
\no \label{sl_sp_122n-1}
&
\renewcommand{\tabcolsep}{0pt}%
\begin{tabular}{l}
$n {\ge} 3$, \\
$I {=} \lbrace 1,2,2n{-}1 \rbrace$
\end{tabular}
&
$6$
&
\renewcommand{\tabcolsep}{0pt}%
\begin{tabular}{l}
$(\pi_1; \pi_1)$, $(\pi_2; \pi_2)$, $(\pi_2; 0)$, $(\pi_{2n-1}; \pi_1)$,\\
$(\pi_1 {+} \pi_{2n-1}; \pi_2)$, $(\pi_2 {+} \pi_{2n-1}; \pi_3)$
\end{tabular}
\\

\hline
\newcase \label{sp_spsp}
& \multicolumn{3}{|c|}{$G = \Sp_{2n}$, $H = \Sp_{2p} \times \Sp_{2q}$, $p \ge q \ge 1$, $p + q = n$}\\

\hline
\no \label{sp_spspij}
&
\renewcommand{\tabcolsep}{0pt}%
\begin{tabular}{l}
$q {=} 1$, \\
$I {=} \lbrace i, j \rbrace$, \\
$i{<}j$
\end{tabular}
&
$7 {-} \delta_i^1 {-} \delta_j^{i+1} {-} \delta_j^n$
&
\renewcommand{\tabcolsep}{0pt}%
\begin{tabular}{l}
$(\pi_i; \pi_i)$, $(\pi_i; \pi_{i-1} {+} \pi'_1)$, $(\pi_i; \pi_{i-2})$ ($i {\ge} 2$), \\
$(\pi_j; \pi_j)$ ($j {\le} n{-}1$), $(\pi_j; \pi_{j-1} {+} \pi'_1)$, \\
$(\pi_j; \pi_{j-2})$, $(\pi_i {+} \pi_j; \pi_{i-1} {+} \pi_{j-1})$ ($j {\le} i{+}2$)
\end{tabular}
\\

\hline
\no \label{sp_spsp12}
&
\renewcommand{\tabcolsep}{0pt}%
\begin{tabular}{l}
$q {\ge} 2$, \\
$I {=} \lbrace 1, 2 \rbrace$
\end{tabular}
&
$6$
&
\renewcommand{\tabcolsep}{0pt}%
\begin{tabular}{l}
$(\pi_1; \pi_1)$, $(\pi_1; \pi'_1)$, $(\pi_2; \pi_2)$, \\
$(\pi_2; \pi_1 {+} \pi'_1)$, $(\pi_2; \pi'_2)$, $(\pi_2; 0)$
\end{tabular}
\\

\hline
\no \label{sp_spsp3}
&
\renewcommand{\tabcolsep}{0pt}%
\begin{tabular}{l}
$q {\ge} 2$, \\
$I {=} \lbrace 3 \rbrace$
\end{tabular}
&
$7 {-} \delta_p^2 {-} \delta_q^2$
&
\renewcommand{\tabcolsep}{0pt}%
\begin{tabular}{l}
$(\pi_3; \pi_3)$ ($p {\ge} 3$), $(\pi_3; \pi_1 {+} \pi'_2)$, \\
$(\pi_3; \pi_1)$, $(\pi_3; \pi'_1)$, $(\pi_3; \pi_2 {+} \pi'_1)$, \\
$(\pi_3; \pi'_3)$ ($q {\ge} 3$), $(2\pi_3; \pi_2 {+} \pi'_2)$
\end{tabular}
\\

\hline
\no \label{sp_spspn}
&
\renewcommand{\tabcolsep}{0pt}%
\begin{tabular}{l}
$q {\ge} 2$, \\
$I {=} \lbrace n \rbrace$
\end{tabular}
&
$q{+}1$
&
$(\pi_n; \pi_{p-k} {+} \pi'_{q-k})$ for $0 {\le} k {\le} q$
\\

\hline
\no \label{sp_spspi}
&
\renewcommand{\tabcolsep}{0pt}%
\begin{tabular}{l}
$q {=} 2$, \\
$I {=} \lbrace i \rbrace$, \\
$4 {\le} i {\le} n{-}1$
\end{tabular}
&
$7 {-} \delta_i^{n-1}$
&
\renewcommand{\tabcolsep}{0pt}%
\begin{tabular}{l}
$(\pi_i; \pi_i)$ ($i {\le} n{-}2$), $(\pi_i; \pi_{i-1} {+} \pi'_1)$, \\
$(\pi_i; \pi_{i-2} {+} \pi'_2)$, $(\pi_i; \pi_{i-2})$,
$(\pi_i; \pi_{i-3} {+} \pi'_1)$, \\
$(\pi_i; \pi_{i-4})$,
$(2\pi_i; \pi_{i-1} {+} \pi_{i-3} {+} \pi'_2)$
\end{tabular}
\\

\hline
\newcase \label{spin_odd}
& \multicolumn{3}{|c|}{$G = \Spin_{2n+1}$, $H = \Spin_p \cdot \Spin_q$, $p + q = 2n + 1 \ge 7$, $p$ is even, $q$ is odd}\\

\hline
\no \label{spin_odd_q=1}
&
\renewcommand{\tabcolsep}{0pt}%
\begin{tabular}{l}
$q {=} 1$, \\
$I {=} S$
\end{tabular}
&
$2n$
&
\renewcommand{\tabcolsep}{0pt}%
\begin{tabular}{l}
$(\pi_k; \pi_{k-1})$ for $1 {\le} k {\le} n$,\\
$(\pi_k; \pi_k)$ for $1 {\le} k {\le} n{-}2$,\\
$(\pi_{n-1}; \pi_{n-1} {+} \pi_n)$, $(\pi_n; \pi_n)$
\end{tabular}
\\

\hline
\no \label{spin_odd_I=1}
&
\renewcommand{\tabcolsep}{0pt}%
\begin{tabular}{l}
$p,q {\ge} 3$, \\
$I {=} \lbrace 1 \rbrace$
\end{tabular}
&
$3$
&
$(\pi_1; \pi_1), (\pi_1; \pi'_1), (2\pi_1; 0)$
\\

\hline
\no \label{spin_odd_n}
&
\renewcommand{\tabcolsep}{0pt}%
\begin{tabular}{l}
$p,q {\ge} 3$, \\
$I {=} \lbrace n \rbrace$
\end{tabular}
&
$\min(p,q) {+} 1$
&
\renewcommand{\tabcolsep}{0pt}%
\begin{tabular}{l}
$(\pi_n; \pi_{\frac p2-1} {+} \pi'_{[\frac{q}2]})$,
$(\pi_n; \pi_{\frac p2} {+} \pi'_{[\frac{q}2]})$,\\
$(2\pi_n; \pi_{\frac p2-1} {+} \pi_{\frac p2} {+} \pi'_{[\frac{q}2]-1}),$\\
$(2\pi_n; \pi_{\frac p2 -k} {+} \pi'_{[\frac{q}2]-k})$ for $2 {\le} k {\le} [\frac{\min(p,q)}2]$,\\
$(2\pi_n;\pi_{\frac p2 - k} {+} \pi'_{[\frac{q}2]+1-k})$ for $2 {\le} k {\le} [\frac{\min(p,q)+1}2]$
\end{tabular}
\\

\hline
\newcase \label{spin_even}
& \multicolumn{3}{|c|}{$G = \Spin_{2n}$, $H = \Spin_p \cdot \Spin_q$, $p \ge q \ge 1$, $p + q = 2n \ge 8$}\\

\hline
\no \label{spin_even_q=1}
&
\renewcommand{\tabcolsep}{0pt}%
\begin{tabular}{l}
$q {=} 1$, \\
$I {=} S$
\end{tabular}
&
$2n{-}1$
&
\renewcommand{\tabcolsep}{0pt}%
\begin{tabular}{l}
$(\pi_k; \pi_k)$ for $1 {\le} k {\le} n{-}1$,\\
$(\pi_k; \pi_{k-1})$ for $1 {\le} k {\le} n{-}2$,\\
$(\pi_n; \pi_{n-1})$, $(\pi_{n-1} {+} \pi_n; \pi_{n-2})$
\end{tabular}
\\

\hline
\no \label{spin_even_I=1}
&
\renewcommand{\tabcolsep}{0pt}%
\begin{tabular}{l}
$q {\ge} 3$, \\
$I {=} \lbrace 1 \rbrace$
\end{tabular}
&
$3$
&
$(\pi_1; \pi_1)$, $(\pi_1; \pi'_1)$, $(2\pi_1; 0)$
\\

\hline
\no \label{spin_even_n_pq_odd}
&
\renewcommand{\tabcolsep}{0pt}%
\begin{tabular}{l}
$q {\ge} 3$, \\
$I {=} \lbrace n \rbrace$,\\
$p,q$ are odd
\end{tabular}
&
$[\frac{q}2] {+} 1$
&
\renewcommand{\tabcolsep}{0pt}%
\begin{tabular}{l}
$(\pi_{n}; \pi_{\frac{p-1}2} {+} \pi'_{\frac{q-1}2})$, \\
$(2\pi_{n}; \pi_{\frac{p-q}2+k} {+} \pi'_k)$ for $0 {\le} k {\le} [\frac{q}2]{-}1$
\end{tabular}
\\

\hline
\no \label{spin_even_n_pq_even}
&
\renewcommand{\tabcolsep}{0pt}%
\begin{tabular}{l}
$q {\ge} 4$, \\
$I {=} \lbrace n \rbrace$,\\
$p,q$ are even
\end{tabular}
&
$\frac{q}2{+}1$
&
\renewcommand{\tabcolsep}{0pt}%
\begin{tabular}{l}
$(\pi_{n}; \pi_{\frac{p}2-1} {+} \pi'_{\frac{q}2-1})$, $(\pi_{n}; \pi_{\frac{p}2} {+} \pi'_{\frac{q}2})$,\\
$(2\pi_{n}; \pi_{\frac{p-q}2+k} {+} \pi'_k)$ for $0 {\le} k {\le} \frac q2 {-} 2$
\end{tabular}
\\

\hline
\newcase \label{f4_b4}
& \multicolumn{3}{|c|}{$G = \mathsf F_4$, $H = \mathsf B_4$}\\

\hline
\no \label{f4_b4_14}
&
$I {=} \lbrace 1,4 \rbrace$
&
$6$
&
\renewcommand{\tabcolsep}{0pt}%
\begin{tabular}{l}
$(\pi_1; \pi_2)$, $(\pi_1; \pi_4)$, $(\pi_4; \pi_1)$, \\
$(\pi_4; \pi_4)$, $(\pi_4; 0)$, $(\pi_1 {+} \pi_4; \pi_3)$
\end{tabular}
\\

\hline
\no \label{f4_b4_2}
&
$I {=} \lbrace 2 \rbrace$
&
$5$
&
\renewcommand{\tabcolsep}{0pt}%
\begin{tabular}{l}
$(\pi_2; \pi_2)$, $(\pi_2; \pi_3)$, $(\pi_2; \pi_1 {+} \pi_3)$,\\
$(\pi_2; \pi_1 {+} \pi_4)$, $(\pi_2; \pi_2 {+} \pi_4)$
\end{tabular}
\\

\hline
\no \label{f4_b4_3}
&
$I {=} \lbrace 3 \rbrace$
&
$5$
&
\renewcommand{\tabcolsep}{0pt}%
\begin{tabular}{l}
$(\pi_3; \pi_1)$, $(\pi_3; \pi_2)$, $(\pi_3; \pi_3)$,\\
$(\pi_3; \pi_4)$, $(\pi_3; \pi_1 {+} \pi_4)$
\end{tabular}
\\

\hline
\newcase  \label{e6_c4}
& \multicolumn{3}{|c|}{$G = \mathsf E_6$, $H = \mathsf C_4$}\\

\hline
\no \label{e6_c4_1}
&
$I {=} \lbrace 1 \rbrace$
&
$3$
&
$(\pi_1; \pi_2)$, $(2\pi_1; \pi_4)$, $(2\pi_1; 0)$
\\

\hline
\newcase \label{e6_a5xa1}
& \multicolumn{3}{|c|}{$G = \mathsf E_6$, $H = \mathsf A_5 \cdot \mathsf A_1$}\\

\hline
\no \label{e6_a5xa1_1}
&
$I {=} \lbrace 1 \rbrace$
&
$3$
&
$(\pi_1; \pi_4)$, $(\pi_1; \pi_1 {+} \pi'_1)$, $(2\pi_1; \pi_2)$
\\

\hline
\newcase \label{e6_f4}
& \multicolumn{3}{|c|}{$G = \mathsf E_6$, $H = \mathsf F_4$}\\

\hline
\no \label{e6_f4_12}
&
$I {=} \lbrace 1,2 \rbrace$
&
$5$
&
\renewcommand{\tabcolsep}{0pt}%
\begin{tabular}{l}
$(\pi_1; \pi_4)$, $(\pi_1; 0)$, $(\pi_2; \pi_1)$,
$(\pi_2; \pi_4)$, \\
$(\pi_1 {+} \pi_2; \pi_3)$
\end{tabular}
\\

\hline
\no \label{e6_f4_13}
&
$I {=} \lbrace 1, 3 \rbrace$
&
$5$
&
$(\pi_1; \pi_4)$, $(\pi_1; 0)$, $(\pi_3; \pi_1)$, $(\pi_3; \pi_3)$, $(\pi_3; \pi_4)$
\\

\hline
\newcase \label{e7_a7}
& \multicolumn{3}{|c|}{$G = \mathsf E_7$, $H = \mathsf A_7$}\\

\hline
\no \label{e7_a7_7}
&
$I {=} \lbrace 7 \rbrace$
&
$4$
&
$(\pi_7; \pi_2)$, $(\pi_7; \pi_6)$, $(2\pi_7; \pi_4)$, $(2\pi_7; 0)$
\\

\hline
\newcase \label{e7_d6xa1}
& \multicolumn{3}{|c|}{$G = \mathsf E_7$, $H = \mathsf D_6 \cdot \mathsf A_1$}\\

\hline
\no \label{e7_d6xa1_7}
&
$I {=} \lbrace 7 \rbrace$
&
$3$
&
$(\pi_7; \pi_6)$, $(\pi_7; \pi_1 {+} \pi'_1)$, $(2\pi_7; \pi_2)$
\\

\hline

\end{longtable}


\begin{longtable}{|c|l|l|l|}

\caption{The non-Levi non-symmetric subgroup case for $G = \SL_n$}
\label{table_sl}
\\
\hline
No. & Conditions & Rank & Indecomposable elements of~$\Gamma_I(G,H)$ \endfirsthead

\caption{The non-Levi non-symmetric subgroup case for $G = \SL_n$ (continued)} \\
\hline
No. & Conditions & Rank & Indecomposable elements of~$\Gamma_I(G,H)$ \endhead

\hline
\newcase \label{sl_spsl}
& \multicolumn{3}{|c|}{$G = \SL_{n}$, $H' = \Sp_{2p} \times \SL_q$, $2p+q=n$, $p \ge 2$, $q \ge 1$}\\

\hline
\no \label{sl_spsl_12}
&
$I {=} \lbrace 1,2 \rbrace$
&
$6 {-} \delta_q^1$
&
\renewcommand{\tabcolsep}{0pt}%
\begin{tabular}{l}
$(\pi_1; \pi_1 {+} \chi_1)$, $(\pi_1; \pi_1' {+} \chi_2)$, 
$(\pi_2; \pi_2 {+} 2\chi_1)$, $(\pi_2; 2\chi_1)$,\\
$(\pi_2; \pi_1 {+} \pi_1' {+} \chi_1 {+} \chi_2)$, $(\pi_2; \pi_2' {+} 2\chi_2)$ ($q {\ge} 2$)
\end{tabular}
\\

\hline
\no \label{sl_spsl_1n-1}
&
$I {=} \lbrace 1, n{-}1 \rbrace$
&
$6 {-} \delta_q^1$
&
\renewcommand{\tabcolsep}{0pt}%
\begin{tabular}{l}
$(\pi_1; \pi_1 {+} \chi_1)$, $(\pi_1; \pi_1' {+} \chi_2)$, 
$(\pi_{n-1}; \pi_{q-1}' {+} 2p\chi_1 {+} (q{-}1)\chi_2)$, \\ $(\pi_{n-1}; \pi_1 {+} (2p{-}1)\chi_1 {+} q\chi_2)$, 
$(\pi_1 {+} \pi_{n-1}; \pi_2 {+} 2p\chi_1 {+} q \chi_2)$, \\
$(\pi_1 {+} \pi_{n-1}; 2p\chi_1 {+} q\chi_2)$ ($q {\ge} 2$)
\end{tabular}
\\

\hline
\no \label{sl_spsl_i_q=1}
&
\renewcommand{\tabcolsep}{0pt}%
\begin{tabular}{l}
$q {=} 1$,\\
$I {=} \lbrace i \rbrace$,\\
$3 {\le} i {\le} p$
\end{tabular}
&
$1 + i$
&
\renewcommand{\tabcolsep}{0pt}%
\begin{tabular}{l}
$(\pi_i; \pi_{i-2k} {+} i\chi_1)$ 
for $0 \le k \le [\frac i2]$, \\
$(\pi_i; \pi_{i-1-2k} {+} (i{-}1)\chi_1 {+} \chi_2)$ for $0 {\le} k {\le} [\frac{i-1}2]$
\end{tabular}
\\

\hline
\no \label{sl_spsl_3}
&
\renewcommand{\tabcolsep}{0pt}%
\begin{tabular}{l}
$q {\ge} 2$,\\
$I {=} \lbrace 3 \rbrace$
\end{tabular}
&
$7 {-} \delta_p^2 {-} \delta_q^2$
&
\renewcommand{\tabcolsep}{0pt}%
\begin{tabular}{l}
$(\pi_3; \pi_1 {+} 3\chi_1)$, $(\pi_3; \pi_3 {+} 3\chi_1)$ ($p \ge 3$),\\
$(\pi_3; \pi_2 {+} \pi_1' {+} 2\chi_1 {+} \chi_2)$, $(\pi_3; \pi_1' {+} 2\chi_1 {+} \chi_2)$,\\
$(\pi_3; \pi_1 {+} \pi_2' {+} \chi_1 {+} 2\chi_2)$, $(\pi_3; \pi_3' {+} 3\chi_2)$ ($q {\ge} 3$),\\
$(2\pi_3; \pi_2 {+} \pi_2' {+} 4\chi_1 {+} 2\chi_2)$
\end{tabular}
\\

\hline
\no \label{sl_spsl_i_p=2}
&
\renewcommand{\tabcolsep}{0pt}%
\begin{tabular}{l}
$p {=} 2$,\\
$I {=} \lbrace i \rbrace$,\\
$4 {\le} i {\le} q$
\end{tabular}
&
$7$
&
\renewcommand{\tabcolsep}{0pt}%
\begin{tabular}{l}
$(\pi_i; \pi'_{i-4} {+} 4\chi_1 {+} (i{-}4)\chi_2)$, 
$(\pi_i; \pi_1 {+} \pi'_{i-3} {+} 3\chi_1 {+} (i{-}3)\chi_2)$, \\
$(\pi_i; \pi_2 {+} \pi'_{i-2} {+} 2\chi_1 {+} (i{-}2)\chi_2)$, 
$(\pi_i; \pi'_{i-2} {+} 2\chi_1 {+} (i{-}2)\chi_2)$,\\
$(\pi_i; \pi_1 {+} \pi'_{i-1} {+} \chi_1 {+} (i{-}1)\chi_2)$, 
$(\pi_i; \pi'_i {+} i\chi_2)$,\\
$(2\pi_i; \pi_2 {+} \pi'_{i-1} {+} \pi'_{i-3} {+} 4\chi_1 {+} (2i{-}4)\chi_2)$
\end{tabular}
\\

\hline
\newcase \label{sl_spsp}
& \multicolumn{3}{|c|}{$G = \SL_{n}$, $H' = \Sp_{2p} \times \Sp_{2q}$, $2p+2q=n$, $p \ge q \ge 2$}\\

\hline
\no \label{sl_spsp_12}
&
$I {=} \lbrace 1, 2 \rbrace$
&
$7$
&
\renewcommand{\tabcolsep}{0pt}%
\begin{tabular}{l}
$(\pi_1; \pi_1 {+} \chi_1)$, $(\pi_1; \pi_1' {+} \chi_2)$, 
$(\pi_2; \pi_2 {+} 2\chi_1)$, 
$(\pi_2; 2\chi_1)$, \\
$(\pi_2; \pi_1 {+} \pi_1' {+} \chi_1 {+} \chi_2)$, 
$(\pi_2; \pi_2' {+} 2\chi_2)$, $(\pi_2; 2\chi_2)$
\end{tabular}
\\

\hline
\no \label{sl_spsp_1n-1}
&
$I {=} \lbrace 1, n{-}1 \rbrace$
&
$7$
&
\renewcommand{\tabcolsep}{0pt}%
\begin{tabular}{l}
$(\pi_1; \pi_1 {+} \chi_1)$, $(\pi_1; \pi_1' {+} \chi_2)$, 
$(\pi_{n-1}; \pi'_1 {-} \chi_2)$, $(\pi_{n-1}; \pi_1 {-} \chi_1)$, \\
$(\pi_1 {+} \pi_{n-1}; \pi_2 {+} 2p\chi_1 {+} 2q\chi_2)$, 
$(\pi_1 {+} \pi_{n-1}; \pi'_2 {+} 2p\chi_1 {+} 2q\chi_2)$, \\
$(\pi_1 {+} \pi_{n-1}; 2p\chi_1 {+} 2q\chi_2)$
\end{tabular}
\\

\hline
\newcase \label{sl_spslsl}
& \multicolumn{3}{|c|}{$G = \SL_{n}$, $H' = \Sp_{2p} \times \SL_q \times \SL_r$, $2p+q+r=n$, $p \ge 2$, $q \ge r \ge 1$}\\

\hline
\no \label{sl_spslsl_2}
&
$I {=} \lbrace 2 \rbrace$
&
$7 {-} \delta_q^1 {-} \delta_r^1$
&
\renewcommand{\tabcolsep}{0pt}%
\begin{tabular}{l}
$(\pi_2; \pi_2 {+} 2\chi_1)$, $(\pi_2; 2\chi_1)$, 
$(\pi_2; \pi'_2 {+} 2\chi_2)$ ($q {\ge} 2$), \\
$(\pi_2; \pi''_2 {+} 2\chi_3)$ ($r {\ge} 2$), 
$(\pi_2; \pi_1 {+} \pi'_1 {+} \chi_1 {+} \chi_2)$, \\
$(\pi_2; \pi_1 {+} \pi''_1 {+} \chi_1 {+} \chi_3)$, 
$(\pi_2; \pi'_1 {+} \pi''_1 {+} \chi_2 {+} \chi_3)$
\end{tabular}
\\

\hline
\newcase \label{sl_spspsl}
& \multicolumn{3}{|c|}{$G = \SL_{n}$, $H' = \Sp_{2p} \times \Sp_{2q} \times \SL_r$, $2p+2q+r=n$, $p \ge q \ge 2$, $r \ge 1$}\\

\hline
\no \label{sl_spspsl_2}
&
$I {=} \lbrace 2 \rbrace$
&
$8 {-} \delta_r^1$
&
\renewcommand{\tabcolsep}{0pt}%
\begin{tabular}{l}
$(\pi_2; \pi_2 {+} 2\chi_1)$, $(\pi_2; 2\chi_1)$, $(\pi_2; \pi'_2 {+} 2\chi_2)$, $(\pi_2; 2\chi_2)$, \\
$(\pi_2; \pi''_2 {+} 2\chi_3)$ ($r {\ge} 2$), $(\pi_2; \pi_1 {+} \pi'_1 {+} \chi_1 {+} \chi_2)$, \\$(\pi_2; \pi_1 {+} \pi''_1 {+} \chi_1 {+} \chi_3)$,
$(\pi_2; \pi'_1 {+} \pi''_1 {+} \chi_2 {+} \chi_3)$
\end{tabular}
\\

\hline
\newcase \label{sl_spspsp}
& \multicolumn{3}{|c|}{$G = \SL_{n}$, $H' = \Sp_{2p} \times \Sp_{2q} \times \Sp_{2r}$, $2p+2q+2r=n$, $p \ge q \ge r \ge 2$}\\

\hline
\no \label{sl_spspsp_2}
&
$I {=} \lbrace 2 \rbrace$
&
$9$
&
\renewcommand{\tabcolsep}{0pt}%
\begin{tabular}{l}
$(\pi_2; \pi_2 {+} 2\chi_1)$, $(\pi_2; 2\chi_1)$, 
$(\pi_2; \pi'_2 {+} 2\chi_2)$, $(\pi_2; 2\chi_2)$, \\
$(\pi_2; \pi''_2 {+} 2\chi_3)$, $(\pi_2; 2\chi_3)$, 
$(\pi_2; \pi_1 {+} \pi'_1 {+} \chi_1 {+} \chi_2)$, \\
$(\pi_2; \pi_1 {+} \pi''_1 {+} \chi_1 {+} \chi_3)$, 
$(\pi_2; \pi'_1 {+} \pi''_1 {+} \chi_2 {+} \chi_3)$
\end{tabular}
\\

\hline
\newcase \label{sl_spin}
& \multicolumn{3}{|c|}{$G = \SL_8$, $H = \Spin_7$}\\

\hline
\no \label{sl_spin_2}
&
$I {=} \lbrace 2 \rbrace$
&
$4$
&
$(\pi_2; \pi_1)$, $(\pi_2; \pi_2)$, $(2\pi_2; 2\pi_3)$, $(2\pi_2; 0)$
\\

\hline

\end{longtable}

\section{The monoids \texorpdfstring{$\Gamma_I(G,H)$ in case~(\ref{case_C2})}%
{Gamma\_I(G,H) in case (C2)}}
\label{sect_proofs_C2}

In this section, we compute the monoid $\Gamma_I(G,H)$ for each of the cases in Table~\ref{table_sym}.

\subsection{Preliminary remarks}

Throughout this section, the numbers of cases refer to Table~\ref{table_sym} unless otherwise specified. All the notation and conventions for that table are used without extra explanation.

Recall that in case~(\ref{case_C2}) $H$ is the subgroup of fixed points of a nontrivial involutive automorphism $\theta$ of~$G$. By abuse of notation, we also denote by $\theta$ the corresponding involutive automorphism of the Lie algebra~$\mathfrak g$, so that $\mathfrak h = \lbrace x \in \mathfrak g \mid \theta(x) = x \rbrace$.

By \cite[Theorem~7.5]{Stei} it is possible to choose the subgroups $B_G$ and $T_G$ to be $\theta$-stable. In this case, the subgroup $B_G^-$ is also $\theta$-stable, $B_G \cap H$ and $B_G^- \cap H$ are Borel subgroups of~$H$, and $T_G \cap H$ is a maximal torus of~$H$.

Choose a subset $I \subset S$ and consider the parabolic subgroup~$P_I^- \supset B_G^-$.
Put $Q = P_I^- \cap H$, this is a parabolic subgroup of~$H$. Then the intersection $L_I \cap H$ is a Levi subgroup of~$Q$ (see, for instance, \cite[Proposition~3.5(1)]{HNOO}), we denote it by~$M$.

Put $\mathfrak g^{-\theta} = \lbrace x \in \mathfrak g \mid \theta(x) = -x \rbrace$, so that $\mathfrak g = \mathfrak h \oplus \mathfrak g^{-\theta}$. It is easy to see that under the above conditions there is an $M$-module isomorphism $\mathfrak g / (\mathfrak p_I^- + \mathfrak h) \simeq \mathfrak p_I^u \cap \mathfrak g^{-\theta}$. Combining this with Corollary~\ref{crl_rank_of_Gamma_refined} we obtain

\begin{proposition} \label{prop_rank_for_sym}
Under the above notation, suppose that $X_I$ is an $H$-spherical variety. Then $\mathfrak p_I^u \cap \mathfrak g^{-\theta}$ is a spherical $M$-module and $\rk \Gamma_I(G,H) = |I| + \rk_{M} (\mathfrak p_I^u \cap \mathfrak g^{-\theta})$.
\end{proposition}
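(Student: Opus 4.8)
The plan is to obtain the statement as a one-line combination of Corollary~\ref{crl_rank_of_Gamma_refined} with the $M$-module isomorphism $\mathfrak g/(\mathfrak p_I^- + \mathfrak h) \simeq \mathfrak p_I^u \cap \mathfrak g^{-\theta}$ recorded just before the statement. First I would check that Corollary~\ref{crl_rank_of_Gamma_refined} applies. Since $B_G$ and $T_G$ were chosen $\theta$-stable, the opposite Borel $B_G^-$ is $\theta$-stable as well, so $B_H^- := B_G^- \cap H$ is a Borel subgroup of~$H$ with $B_H^- \subset B_G^-$; and, as recalled above, $M = L_I \cap H$ is a Levi subgroup of $P_I^- \cap H$. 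Under the standing assumption that $X_I$ is $H$-spherical, Corollary~\ref{crl_rank_of_Gamma_refined} then gives that $\mathfrak g/(\mathfrak p_I^- + \mathfrak h)$ is a spherical $M$-module and that $\rk \Gamma_I(G,H) = |I| + \rk_M\bigl(\mathfrak g/(\mathfrak p_I^- + \mathfrak h)\bigr)$.

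It then remains only to transport this across the isomorphism above: an isomorphism of $M$-modules preserves sphericity and, in the spherical case, the rank, so $\mathfrak p_I^u \cap \mathfrak g^{-\theta}$ is a spherical $M$-module of the same rank (it is $M$-stable because $\mathfrak p_I^u$ is $L_I$-stable and $\mathfrak g^{-\theta}$ is $H$-stable). This already yields the proposition; at the level of the proposition itself there is essentially no obstacle, the whole content having been pushed into the cited corollary and the cited isomorphism.

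The one step with any real content is the $M$-module isomorphism $\mathfrak g/(\mathfrak p_I^- + \mathfrak h) \simeq \mathfrak p_I^u \cap \mathfrak g^{-\theta}$ itself, which the text states without proof; if one wanted to spell it out, the natural route is to use the two $M$-stable decompositions $\mathfrak g = \mathfrak h \oplus \mathfrak g^{-\theta}$ and $\mathfrak g = \mathfrak p_I^- \oplus \mathfrak p_I^u$ and verify that the composite $\mathfrak p_I^u \cap \mathfrak g^{-\theta} \hookrightarrow \mathfrak g \twoheadrightarrow \mathfrak g/(\mathfrak p_I^- + \mathfrak h)$ is bijective. For injectivity, if $z \in \mathfrak p_I^u \cap \mathfrak g^{-\theta}$ is written $z = p + v$ with $p \in \mathfrak p_I^-$, $v \in \mathfrak h$, then applying $\theta$ gives $2z = p - \theta(p)$, so every root occurring in $z$ occurs in $\mathfrak p_I^-$ or in $\theta(\mathfrak p_I^-) = \mathfrak p_{\theta(I)}^-$; since $\theta$ permutes the positive roots and the simple roots $\Pi_G$, a short bookkeeping with supports of roots — using that the roots of $\mathfrak p_I^u$ are exactly the positive roots whose support meets $I$, and that $z \in \mathfrak g^{-\theta}$ forces the $\theta$-image of every root of $z$ to occur in $z$ too — forces $z = 0$. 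Surjectivity then follows by a dimension count. I expect this supporting verification of the isomorphism, rather than the deduction of the proposition, to be the part that needs care.
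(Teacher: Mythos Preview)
Your proposal is correct and matches the paper exactly: the paper obtains the proposition by combining Corollary~\ref{crl_rank_of_Gamma_refined} with the $M$-module isomorphism $\mathfrak g/(\mathfrak p_I^- + \mathfrak h) \simeq \mathfrak p_I^u \cap \mathfrak g^{-\theta}$ stated (without proof) immediately before it. Your optional verification of that isomorphism goes beyond what the paper supplies, and the injectivity argument via $2z = p - \theta(p)$ together with the root-support bookkeeping is sound.
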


For each of the cases considered in this section, the rank of $\Gamma_I(G,H)$ is computed using the formula in Proposition~\ref{prop_rank_for_sym}.
To this end, in each case we make use of the information on a particular embedding of $H$ into~$G$, the corresponding group $M$, and the spherical $M$-module $\mathfrak p_I^u \cap \mathfrak g^{-\theta}$ given in~\cite[\S\,5]{HNOO}; the value $\rk_{M} (\mathfrak p_I^u \cap \mathfrak g^{-\theta})$ is computed as described in~\S\,\ref{subsec_spherical_modules}. For each case, all conclusions on indecomposable elements of $\Gamma_I(G,H)$ are obtained by applying Propositions~\ref{prop_indec_I} and~\ref{prop_indec_II}.

For Cases~\ref{f4_b4}--\ref{e7_d6xa1}, involving the exceptional simple groups, all the information on restrictions of simple $G$-modules to~$H$ is obtained by using the program LiE~\cite{LiE}. Specifically, for a given pair $(G,H)$, the restriction to~$H$ of an irreducible representation $R_G(\lambda)$ is computed using the function~\textit{branch}. One of its arguments is the restriction matrix, which describes the character restriction map $\mathfrak X(T_G) \to \mathfrak X(T_H)$ under the assumption $T_H \subset T_G$. When $H$ is semisimple (which happens in all our cases), for the $i$th fundamental weight of~$G$ the coefficients in the expression of its restriction to $T_H$ in the basis of fundamental weights of~$H$ constitute the $i$th row of the restriction matrix. For each of the cases, the restriction matrix is easily computed using the information in~\cite[\S\,5]{HNOO}, we present this matrix at the beginning of each case.

Below we state two auxiliary results that are used in several cases.

\begin{proposition} \label{prop_equal_rank_res}
Let $F$ be a connected reductive group and let $K \subset F$ be a connected reductive subgroup such that $\rk K = \rk F$. Suppose that $V_1, V_2$ are two finite-dimensional $F$-modules such that $\left. V_1 \right|_K \simeq \left. V_2 \right|_K$. Then $V_1 \simeq V_2$.
\end{proposition}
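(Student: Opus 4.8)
The plan is to deduce the isomorphism $V_1 \simeq V_2$ from the equality of their characters as $F$-modules, using that $\left. V_1 \right|_K \simeq \left. V_2 \right|_K$ forces the characters to agree on a maximal torus common to $K$ and $F$. First I would fix a maximal torus $T_K$ of $K$; since $\rk K = \rk F$, the torus $T_K$ is also a maximal torus of $F$, so I may take $T_F = T_K$. The character of a finite-dimensional $F$-module $V$, viewed as an element of the group ring $\ZZ[\mathfrak X(T_F)]$, is obtained by restricting the $F$-action to $T_F$; but this same element of $\ZZ[\mathfrak X(T_F)] = \ZZ[\mathfrak X(T_K)]$ is exactly the character of $\left. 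V \right|_K$ restricted to $T_K$. Hence the hypothesis $\left. V_1 \right|_K \simeq \left. V_2 \right|_K$ gives that $V_1$ and $V_2$ have the same character as $T_F$-modules.

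The second step is the standard fact that the character of a finite-dimensional representation of a connected reductive group $F$, as a function on $T_F$ (equivalently, as a Weyl-group-invariant element of $\ZZ[\mathfrak X(T_F)]$), determines the representation up to isomorphism. Concretely: write $V_1 \simeq \bigoplus_\lambda R_F(\lambda)^{\oplus a_\lambda}$ and $V_2 \simeq \bigoplus_\lambda R_F(\lambda)^{\oplus b_\lambda}$ with $a_\lambda, b_\lambda \in \ZZ^+$; the characters of the $R_F(\lambda)$ form a $\ZZ$-basis of the ring of Weyl-invariant elements of $\ZZ[\mathfrak X(T_F)]$ (for instance by triangularity with respect to the dominance order, the leading term of $\operatorname{ch} R_F(\lambda)$ being the Weyl orbit sum of $\lambda$), so equality of characters forces $a_\lambda = b_\lambda$ for every dominant weight $\lambda$, whence $V_1 \simeq V_2$ as $F$-modules.

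There is essentially no obstacle here: the only point requiring a little care is the identification $T_F = T_K$, which uses that a connected reductive subgroup of equal rank contains a maximal torus of the ambient group (all maximal tori of $F$ being conjugate, one may choose $T_F$ inside $K$, and a maximal torus of $K$ lying in $T_F$ has dimension $\rk K = \rk F$, hence equals $T_F$). Everything else is the classical linear-independence of irreducible characters for connected reductive groups, which may be cited from a standard reference such as \cite{GW} or taken as well known. I would write the argument in two or three sentences.
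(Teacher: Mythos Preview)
Your proposal is correct and follows essentially the same approach as the paper's own proof: identify a maximal torus of $K$ with one of $F$ using the equal-rank hypothesis, then invoke the standard fact that a finite-dimensional representation of a connected reductive group is determined by its restriction to a maximal torus. The paper states this in two sentences without spelling out the linear-independence-of-characters argument that you sketch, but the underlying idea is identical.
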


\begin{proof}
As $\rk K = \rk F$, every maximal torus of $K$ is a maximal torus of~$F$. It remains to apply the well-known fact that every finite-dimensional representation of a connected reductive algebraic group is uniquely determined by its restriction to a maximal torus.
\end{proof}

The following proposition is well known and follows, for instance, from \cite[\S\,129, Theorems~2 and~3]{Zh} or \cite[Theorems~8.1.3 and~8.1.4]{GW}.

\begin{proposition} \label{prop_spin_res}
Let $G = \Spin_m$ with $m \ge 4$ and $H = \Spin_{m-1} \subset G$.
\begin{enumerate}[label=\textup{(\alph*)},ref=\textup{\alph*}]
\item
If $m = 2n+1$ then $\left. R_G(\pi_n) \right|_H \simeq R_H(\pi_{n-1}) \oplus R_H (\pi_n)$.

\item
If $m = 2n$ then $\left. R_G(\pi_n) \right|_H \simeq \left. R_G(\pi_{n-1}) \right|_H \simeq R_H(\pi_{n-1})$.
\end{enumerate}
\end{proposition}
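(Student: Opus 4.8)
The plan is to reduce everything to a comparison of formal characters on a maximal torus of $H$, exploiting the fact that every representation occurring is minuscule, hence has all of its weights with multiplicity one. I would realize $G = \Spin_m$ as the spin group of a nondegenerate quadratic space $\FF^m$, take $\FF^{m-1} \subset \FF^m$ to be a nondegenerate hyperplane with $H = \Spin_{m-1}$ the corresponding spin subgroup, and fix maximal tori $T_H \subset T_G$ in a compatible way. Following the Bourbaki numbering, I would then express all weights in the standard orthonormal basis $e_1, e_2, \ldots$ of $\mathfrak X(T_G) \otimes \QQ$.

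First take $m = 2n+1$, so that $G$ has type $\mathsf B_n$, $H$ has type $\mathsf D_n$, and $T_H = T_G$ (equal rank). Here the spin module $R_G(\pi_n)$ has weight set $\bigl\{\tfrac12(\pm e_1 \pm \ldots \pm e_n)\bigr\}$ with all multiplicities one, while the half-spin modules $R_H(\pi_{n-1})$ and $R_H(\pi_n)$ have as weight sets precisely the tuples in this set with an odd, respectively even, number of minus signs — again each occurring once. Hence $\left. R_G(\pi_n) \right|_H$ and $R_H(\pi_{n-1}) \oplus R_H(\pi_n)$ have the same $T_H$-character, and Proposition~\ref{prop_equal_rank_res} applied with $F = H$ and $K = T_H$ gives the asserted isomorphism of $H$-modules. (The two half-spins of $\mathsf D_n$ are interchanged by the diagram automorphism, but the statement is insensitive to this.)

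Next take $m = 2n$, so that $G$ has type $\mathsf D_n$, $H$ has type $\mathsf B_{n-1}$, and now $\rk H = n-1$: the character restriction $\mathfrak X(T_G) \to \mathfrak X(T_H)$ identifies $e_1, \ldots, e_{n-1}$ with the coordinates of $T_H$ and sends $e_n \mapsto 0$. The half-spin modules $R_G(\pi_{n-1})$ and $R_G(\pi_n)$ are minuscule with weights $\tfrac12(\varepsilon_1 e_1 + \ldots + \varepsilon_n e_n)$ ($\varepsilon_i = \pm 1$) subject to $\prod_i \varepsilon_i = -1$, respectively $\prod_i \varepsilon_i = 1$, each occurring once. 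For every choice of $(\varepsilon_1, \ldots, \varepsilon_{n-1})$ there is a unique admissible $\varepsilon_n$, so the restricted weights form $\bigl\{\tfrac12(\pm e_1 \pm \ldots \pm e_{n-1})\bigr\}$ with every element of multiplicity one — precisely the weight system of the minuscule spin module $R_H(\pi_{n-1})$ of $\mathsf B_{n-1}$. Therefore $\left. R_G(\pi_n) \right|_H$ and $\left. R_G(\pi_{n-1}) \right|_H$ each have the $T_H$-character of $R_H(\pi_{n-1})$, and Proposition~\ref{prop_equal_rank_res} (again with $F = H$, $K = T_H$) yields both isomorphisms. The small cases $m = 4$ (where $\Spin_4 \simeq \SL_2 \times \SL_2$ and $\Spin_3 \simeq \SL_2$) and $m = 5$ are checked in exactly the same way.

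Honestly there is no real obstacle: the argument is a routine weight count. The two points to be careful about are (i) keeping the parity regimes apart — in particular that $T_H$ is a proper subtorus of $T_G$ in the even case, so that there one must descend all the way to a maximal torus of $H$ rather than invoke an equal-rank argument, whereas $T_H = T_G$ in the odd case; and (ii) matching the Bourbaki numbering of the fundamental weights across each chain, which is exactly what pins down the weight of $H$ appearing on the right-hand side (and, for $\mathsf D_n$, the labelling convention for the two half-spins). Alternatively, one could sidestep the weight bookkeeping by using the Clifford model: the algebra isomorphism $\mathrm{Cl}^{0}(\FF^m) \simeq \mathrm{Cl}(\FF^{m-1})$ together with the classification of Clifford algebras over $\FF$ as sums of matrix algebras identifies the spin module(s) of $\Spin_m$ with the simple module(s) of $\mathrm{Cl}^{0}(\FF^m)$ and makes both decompositions immediate — but this requires recalling more structure than the character computation above.
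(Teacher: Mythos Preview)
Your argument is correct. The weight sets of the spin and half-spin modules are exactly as you describe, all multiplicities are one because these representations are minuscule, and equality of $T_H$-characters forces isomorphism of $H$-modules; invoking Proposition~\ref{prop_equal_rank_res} with $K = T_H$ is a legitimate (if slightly heavy-handed) way to phrase that last step.

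As for comparison with the paper: the paper does not actually prove this proposition. It declares the result well known and refers to the general branching rules for orthogonal (spin) groups in Zhelobenko and in Goodman--Wallach, from which the special case of the spin representations can be read off. Your approach is therefore not so much \emph{different} as \emph{more}: you supply a short, self-contained proof in place of a citation. What you gain is that the reader need not chase references; what the paper's route buys is brevity and a pointer to the full $\Spin_m \downarrow \Spin_{m-1}$ branching law, of which this proposition is the minuscule instance. Your Clifford-algebra alternative would also work and is closer in spirit to how some of the cited texts construct the spin modules in the first place.
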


\subsection{Case~\ref{sl_so}}
\label{subsec_sl_so}

$G = \SL_n$, $H = \SO_n$, $n \ge 3$. Using~\cite[Table~5, type~$\mathsf A_l$, no.~1]{OV} we find that
\begin{equation} \label{eqn_wedge_products_sl}
R_G(\pi_i) \simeq \wedge^i R_G(\pi_1)
\end{equation}
for all $i = 1, \ldots, n-1$.
Formulas~\cite[Table~5, type~$\mathsf A_l$, no.~3]{OV} imply
\begin{equation} \label{eqn_oplus_otimes_sl}
R_G(2\pi_i)\oplus R_G(\pi_{i-1})\otimes R_G(\pi_{i+1})=R_G(\pi_i)\otimes R_G(\pi_i)
\end{equation}
for all $i = 1, \ldots, n-1$.

\subsubsection{Case~\textup{\ref{sl_so_odd_i}}}
$n = 2l+1$, $I = \lbrace i \rbrace$, $i \le l$. The pair $(M', \mathfrak p_I^u \cap \mathfrak g^{-\theta})$ is equivalent to $(\SL_{i}, \operatorname{S}^2 \FF^{i})$, hence $\rk \Gamma_I(G,H) = i + 1$.

Using (\ref{eqn_wedge_products_sl}) and~\cite[Table~5, types~$\mathsf B_l$, no.~1]{OV}, we find that
\[
\left.R_G(\pi_i) \right|_H \simeq
\begin{cases}
R_H(\pi_i) & \ \text{if} \ i \le l-1;\\
R_H(2\pi_l) & \ \text{if} \ i = l.
\end{cases}
\]
Whence all indecomposable elements of $\Gamma_I(G,H)$ of the form $(\pi_i; *)$.

Restricting formula~(\ref{eqn_oplus_otimes_sl}) to $H$ and using~\cite[Table~5, type~$\mathsf B_l$, no.~3]{OV} we find that
\[
\left.R_G(2\pi_i)\right|_H \simeq W_{i,i}(G,H) \oplus \bigoplus \limits_{0 \le k \le i-1} R_H(2\pi_k).
\]
This yields all indecomposable elements of $\Gamma_I(G,H)$ of the form $(2\pi_i; *)$.

\subsubsection{Case~\textup{\ref{sl_so_even_i}}}
$n = 2l$, $I = \lbrace i \rbrace$, $i \le l$. The pair $(M', \mathfrak p_I^u \cap \mathfrak g^{-\theta})$ is equivalent to $(\SL_{i}, \operatorname{S}^2 \FF^{i})$, hence $\rk \Gamma_I(G,H) = i + 1$.

Using (\ref{eqn_wedge_products_sl}) and~\cite[Table~5, type~$\mathsf D_l$, no.~1]{OV}, we find that
\[
\left.R_G(\pi_i) \right|_H \simeq
\begin{cases}
R_H(\pi_i) & \ \text{if} \ i \le l-2;\\
R_H(\pi_{l-1}+\pi_l) & \ \text{if} \ i = l-1;\\
R_H(2\pi_{l-1}) \oplus R_H(2\pi_l) & \ \text{if} \ i = l.
\end{cases}
\]
Whence all indecomposable elements of $\Gamma_I(G,H)$ of the form $(\pi_i; *)$.

Restricting formula~(\ref{eqn_oplus_otimes_sl}) to $H$ and using~\cite[Table~5, type~$\mathsf D_l$, no.~3]{OV} we find that
\[
\left.R_G(2\pi_i)\right|_H \simeq W_{i,i}(G,H) \oplus \bigoplus \limits_{0 \le k \le k_0} R_H(2\pi_k)
\]
where $k_0 = \min(i-1,n-2)$. This yields all indecomposable elements of $\Gamma_I(G,H)$ of the form $(2\pi_i; *)$.

\subsection{Case~\ref{sl_sp}}
\label{SSspsl1k}

$G=\SL_{2n}$, $H=\Sp_{2n}$, $n \ge 2$. Given an integer $p$ such that $1 \le p \le 2n-1$, put $p' = \min(p, 2n-p)$. Then it follows from~(\ref{eqn_wedge_products_sl}) and~\cite[Table~5, type~$\mathsf C_l$, no.~1]{OV} that
\begin{equation} \label{SL_to_Sp_fund}
\left. R_G(\pi_p) \right|_H \simeq \bigoplus \limits_{0 \le k \le [\frac{p'}2]} R_H(\pi_{p'-2k}).
\end{equation}

By \cite[Table~5, type~$\mathsf A_l$, no.~4]{OV} we have
\begin{equation} \label{SL_aux_1}
R_G(\pi_1)\otimes R_G(\pi_p) \simeq R_G(\pi_1 + \pi_p) \oplus R_G(\pi_{p + 1}).
\end{equation}

Finally, the formula \cite[Table~5, type~$\mathsf C_l$, no.~4]{OV} implies that
\begin{equation} \label{Sp_aux}
R_H(\pi_1) \otimes R_H(\pi_{q}) \simeq R_H(\pi_1 + \pi_q) \oplus R_H(\pi_{q-1}) \oplus R_H(\pi_{q+1})
\end{equation}
for all $1 \le q \le n$ (the last summand is present for $q \le n-1$) .

\subsubsection{Case~\textup{\ref{sl_sp_1ipart1}}}
\label{sss_SL_to_Sp_1k}

$I = \lbrace 1,i \rbrace$, $2 \le i \le n$. The pair $(M', \mathfrak p_I^u \cap \mathfrak g^{-\theta})$ is equivalent to $(\SL_{i-1}, \wedge^2 \FF^{i-1} \oplus \FF^{i-1})$, hence $\rk \Gamma_I(G,H) = i + 1$.

Formula~(\ref{SL_to_Sp_fund}) yields all indecomposable elements of $\Gamma_I(G,H)$ of the form $(\pi_1; *)$ and of the form $(\pi_i; *)$. Next, from (\ref{SL_aux_1}), (\ref{SL_to_Sp_fund}), and (\ref{Sp_aux}) we find that
\[
\left. R_G(\pi_1 + \pi_i) \right|_H \simeq W_{1,i}(G,H) \oplus \bigoplus \limits_{0 \le k \le [\frac{i-3}2]} R_H(\pi_{i-1-2k}),
\]
which yields all indecomposable elements of $\Gamma_I(G,H)$ of the form $(\pi_1 + \pi_i; *)$.

\subsubsection{Case~\textup{\ref{sl_sp_1ipart2}}}

$I = \lbrace 1,i \rbrace$, $i \ge n + 1$.
The pair $(M', \mathfrak p_I^u \cap \mathfrak g^{-\theta})$ is equivalent to
\[
(\Sp_{2i-2n} \times \SL_{2n-i-1}, \FF^{2i-2n} \oplus (\wedge^2 \FF^{2n-i-1} \oplus \FF^{2n-i-1})),
\]
hence $\rk \Gamma_I(G,H) = 2n-i+2$.

Formula~(\ref{SL_to_Sp_fund}) yields all indecomposable elements of $\Gamma_I(G,H)$ of the form $(\pi_1; *)$ and of the form $(\pi_i; *)$. Next, from (\ref{SL_aux_1}), (\ref{SL_to_Sp_fund}), and (\ref{Sp_aux}) we find that
\[
\left. R_G(\pi_1 + \pi_i) \right|_H \simeq W_{1,i}(G,H) \oplus \bigoplus \limits_{0 \le k \le [\frac{2n-i-1}2]} R_H(\pi_{2n-i+1-2k})
\]
which yields all indecomposable elements of $\Gamma_I(G,H)$ of the form $(\pi_1 + \pi_i; *)$.

\subsubsection{Case~\textup{\ref{sl_sp_ii+1}}}
$I = \lbrace i, i + 1 \rbrace$, $i \le n - 1$.
The pair $(M', \mathfrak p_I^u \cap \mathfrak g^{-\theta})$ is equivalent to $(\SL_i, \wedge^2 \FF^i \oplus \FF^{i})$, hence $\rk \Gamma_I(G,H) = i + 2$.

Formula~(\ref{SL_to_Sp_fund}) yields all indecomposable elements of $\Gamma_I(G,H)$ of the form $(\pi_i; *)$ and of the form $(\pi_{i+1}; *)$, which already gives the required number of indecomposable elements.

\subsubsection{Case~\textup{\ref{sl_sp_123}}}
$I = \lbrace 1,2,3 \rbrace$. The pair $(M', \mathfrak p_I^u \cap \mathfrak g^{-\theta})$ is equivalent to
$(\lbrace e \rbrace, \FF^1 \oplus \FF^1 \oplus \FF^1)$ for $n \ge 3$ and to $(\lbrace e \rbrace, \FF^1 \oplus \FF^1)$ for $n=2$.
As a result, $\rk \Gamma_I(G,H) = 6 - \delta_n^2$.

Formula~(\ref{SL_to_Sp_fund}) yields all indecomposable elements of $\Gamma_I(G,H)$ of the form $(\pi_1; *)$, of the form $(\pi_2; *)$, and of the form $(\pi_3; *)$. From the information for Cases~\ref{sl_sp_1ipart1} and~\ref{sl_sp_1ipart2} we extract the last indecomposable element $(\pi_1 + \pi_3; \pi_2)$.

\subsubsection{Case~\textup{\ref{sl_sp_122n-1}}}
$n \ge 3$, $I = \lbrace 1,2,2n-1 \rbrace$. The pair $(M', \mathfrak p_I^u \cap \mathfrak g^{-\theta})$ is equivalent to $(\Sp_{2n-4}, \FF^{2n-4} \oplus \FF^1 \oplus \FF^1)$, hence $\rk \Gamma_I(G,H) = 6$.

Formula~(\ref{SL_to_Sp_fund}) yields all indecomposable elements of $\Gamma_I(G,H)$ of the form $(\pi_1; *)$, of the form $(\pi_2; *)$, and of the form $(\pi_{2n-1}; *)$. From the information for Case~\ref{sl_sp_1ipart2} with $I = \lbrace 1, 2n-1 \rbrace$, we obtain the indecomposable element $(\pi_1 + \pi_{2n-1}; \pi_2)$. Applying duality to Case~\ref{sl_sp_1ipart2} with $I = \lbrace 1, 2n-2 \rbrace$, we get the last indecomposable element $(\pi_2 + \pi_{2n-1}; \pi_3)$.

\subsection{Case~\ref{sp_spsp}}
\label{subsec_sp_spsp}

$G = \Sp_{2n}$, $H = \Sp_{2p} \times \Sp_{2q}$, $p \ge q \ge 1$, $p + q = n$.

\subsubsection{Case~\textup{\ref{sp_spspij}}}

$q = 1$, $I = \lbrace i, j \rbrace$, $i < j$.
The pair $(M', \mathfrak p_I^u \cap \mathfrak g^{-\theta})$ is equivalent to
\[
(\SL_{i-1} \times \SL_{j-i} \times \Sp_{2n-2j}, \FF^{i-1} \oplus (\FF^{j-i} \oplus (\FF^{j-i})^*) \oplus \FF^{2n-2j}),
\]
hence
$\rk \Gamma_I(G,H) = 7 - \delta_i^1 - \delta_j^{i+1} - \delta_n^j$.

In this case, the description of the set of indecomposable elements of $\Gamma_I(G,H)$ follows from well-known branching rules for the pair $(\Sp_{2n},\Sp_{2n-2} \times \Sp_2)$, we use them in the form of~\cite[Theorem~3.3]{WY}; see also other versions in \cite[Theorem~2]{Lep} or~\cite[Theorems~4 and~5]{Lee}.

In what follows, we shall need the following consequence of this case.

\begin{proposition} \label{prop_sp_spsp_i}
Suppose that $q = 1$ and $I = \lbrace i \rbrace$ with $1 \le i \le n$. Then $\rk \Gamma_I(G/H) = 3 - \delta_i^1 - \delta_i^n$ and the indecomposable elements of $\Gamma_I(G/H)$ are $(\pi_i; \pi_i)$ $(i \le n-1)$, $(\pi_i; \pi_{i-1} {+} \pi'_1)$, $(\pi_i; \pi_{i-2})$ $(i \ge 2)$.
\end{proposition}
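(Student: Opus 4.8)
The plan is to deduce this proposition from Case~\ref{sp_spspij} by means of Reduction~1, handling the positions $1 \le i \le n-1$ and $i = n$ separately. The split is forced by the fact that Case~\ref{sp_spspij} is stated only for a strictly increasing pair of indices, so the index $n$ can never occur there as the \emph{smaller} index and must instead be produced as the \emph{larger} one.

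First I would treat $1 \le i \le n-1$. Pick any $j$ with $i < j \le n$, say $j = i+1$, and consider the triple $(G,H,\{i,j\})$; since $q = 1$ this is exactly Case~\ref{sp_spspij} with $i$ as the smaller index, so in particular $X_{\{i,j\}}$ is $H$-spherical. Applying Reduction~1 with $I = \{i,j\}$ and $I' = \{i\}$, the variety $X_{\{i\}}$ is automatically $H$-spherical, and the indecomposable elements of $\Gamma_{\{i\}}(G,H)$ are exactly those indecomposable elements of $\Gamma_{\{i,j\}}(G,H)$ whose first component lies in $\Lambda^+_{\{i\}}(G) = \ZZ^+\pi_i$. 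Reading off the Case~\ref{sp_spspij} list, the only such elements are $(\pi_i; \pi_i)$, $(\pi_i; \pi_{i-1} + \pi'_1)$, and, when $i \ge 2$, $(\pi_i; \pi_{i-2})$, while every entry involving $\pi_j$ or $\pi_i + \pi_j$ is discarded. As $\Gamma_{\{i\}}(G,H)$ is free by Theorem~\ref{thm_monoid_is_free}, its rank is the number of its indecomposable elements, that is, $3 - \delta_i^1$, which coincides with $3 - \delta_i^1 - \delta_i^n$ since $i \le n-1$.

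Next I would treat $i = n$; note that $n \ge 2$ because $n = p+q$ with $p \ge q = 1$. Here I would instead take $I = \{n-1,n\}$, which is again Case~\ref{sp_spspij}, now with $n$ playing the role of the larger index. Applying Reduction~1 with $I' = \{n\}$, the indecomposable elements of $\Gamma_{\{n\}}(G,H)$ are those indecomposable elements of $\Gamma_{\{n-1,n\}}(G,H)$ whose first component is a multiple of $\pi_n$; from the list for Case~\ref{sp_spspij} with smaller index $n-1$ and larger index $n$, these are $(\pi_n; \pi_{n-1} + \pi'_1)$ and $(\pi_n; \pi_{n-2})$, whereas $(\pi_n; \pi_n)$ is absent (its condition ``larger index $\le n-1$'' fails) and the mixed entry has the wrong first component. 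Hence $\rk \Gamma_{\{n\}}(G,H) = 2 = 3 - \delta_n^1 - \delta_n^n$. The two ranges overlap in $2 \le i \le n-1$ and agree there, so the description of the rank and of the indecomposable elements holds for all $1 \le i \le n$, with $(\pi_i;\pi_i)$ present exactly when $i \le n-1$. I do not anticipate any genuine obstacle: the only thing requiring care is the bookkeeping of which entries of the Case~\ref{sp_spspij} list survive the restriction to $\{i\}$ in each of the two roles, in particular the disappearance of $(\pi_i; \pi_i)$ at $i = n$.
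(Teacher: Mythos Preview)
Your proposal is correct and matches the paper's approach: the paper states the proposition simply as ``the following consequence of this case'' (Case~\ref{sp_spspij}), and Reduction~1 is exactly the mechanism for extracting the single-index monoids from the two-index one. Your case split between $i \le n-1$ (where $i$ plays the role of the smaller index) and $i = n$ (where it must play the role of the larger index) is the right bookkeeping, and your identification of which indecomposables survive the restriction to $\{i\}$ is accurate.
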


\subsubsection{Case~\textup{\ref{sp_spsp12}}}

$q \ge 2$, $I = \lbrace 1, 2 \rbrace$. The pair $(M', \mathfrak p_I^u \cap \mathfrak g^{-\theta})$ is equivalent to $(\Sp_{2q}, \FF^{2q} \oplus \FF^{2q})$, hence $\rk \Gamma_I(G,H) = 6$.

Clearly, $\left.R_G(\pi_1)\right|_H \simeq R_H(\pi_1) \oplus R_H(\pi'_1)$, which yields all indecomposable elements of $\Gamma_I(G,H)$ of the form $(\pi_1; *)$. Next, we find the decomposition of $\left. R_G(\pi_2)\right|_H$ into irreducible components. To this end, consider the group $F = \SL_{2n}$. It is easy to see from Case~\ref{sl_sp_123} that
\begin{equation} \label{eqn_pi2_for_sp}
\left. R_F(\pi_2) \right|_G \simeq R_G(\pi_2) \oplus R_G(0).
\end{equation}
Now, we put $K = \SL_{2p} \times \SL_{2q}$ and restrict $R_F(\pi_2)$ to $H$ through the chain $F \supset K \supset H$. Using Proposition~\ref{prop_sl_Levi_pq_i} we get
\[
\left. R_F(\pi_2) \right|_{K} \simeq
R_{K}(\pi_2) \oplus R_{K}(\pi_1 + \pi'_1) \oplus R_{K}(\pi'_2).
\]
Then applying Case~\ref{sl_sp_123} and comparing with~(\ref{eqn_pi2_for_sp}), we finally obtain
\[
\left. R_G(\pi_2) \right|_H \simeq R_H(0) \oplus R_H(\pi_1 + \pi'_1) \oplus R_H(\pi_2) \oplus R_H(\pi'_2),
\]
which yields all indecomposable elements of $\Gamma_I(G,H)$ of the form $(\pi_2; *)$.

\subsubsection{Case~\textup{\ref{sp_spsp3}}}

$q \ge 2$, $I = \lbrace 3 \rbrace$. The pair $(M', \mathfrak p_I^u \cap \mathfrak g^{-\theta})$ is equivalent to $(\Sp_{2q} \times \SL_3$, $\FF^{2q} \otimes\nobreak \FF^3)$ for $p \ge 3$ and to $(\SL_2 \times \SL_2, \FF^2 \otimes (\FF^2 \oplus \FF^1))$ for $p = q = 2$, hence $\rk \Gamma_I(G,H) = 7 - \delta_p^2 - \delta_q^2$.

Consider the group $F = \SL_{2n}$. It is easy to see from Case~\ref{sl_sp_123} that
\begin{equation} \label{eqn_pi3_for_sp}
\left. R_F(\pi_3) \right|_G \simeq R_G(\pi_3) \oplus R_G(\pi_1).
\end{equation}

Next, we put $K = \SL_{2p} \times \SL_{2q}$ and restrict $R_F(\pi_3)$ to $H$ through the chain $F \supset K \supset H$. Using Proposition~\ref{prop_sl_Levi_pq_i} we get
\[
\left. R_F(\pi_3) \right|_{K} \simeq
R_{K}(\pi_3) \oplus R_{K}(\pi_2 + \pi'_1)  \oplus R_{K}(\pi_1 + \pi'_2)\oplus R_{K}(\pi'_3).
\]
Then applying Case~\ref{sl_sp_123}, decomposing $\left. R_G(\pi_1) \right|_H$ via Case~\ref{sp_spsp12}, and comparing with~(\ref{eqn_pi3_for_sp}) we obtain
\[
\left. R_G(\pi_3) \right|_H \simeq R_H(\pi_3) \oplus R_H(\pi_1) \oplus R_H(\pi_2 + \pi'_1) \oplus R_H(\pi_1 + \pi'_2) \oplus R_H(\pi'_1) \oplus R_H(\pi'_3)
\]
where the first summand is present for $p \ge 3$ and the last summand is present for $q \ge 3$. This yields all indecomposable elements of $\Gamma_I(G,H)$ of the form $(\pi_3; *)$.

Next, it follows again from Case~\ref{sl_sp_123} that
\begin{equation} \label{eqn_2pi3_for_sp}
\left. R_F(2\pi_3) \right|_G \oplus R_G(\pi_2) \simeq R_G(2\pi_3) \oplus \left. R_F(\pi_1+\pi_3) \right|_G.
\end{equation}
Restricting $R_F(2\pi_3)$ and $R_F(\pi_1 + \pi_3)$ to $H$ through the chain $F \supset K \supset H$ by using Case~\ref{sl_Levi_pq_1i_part1} of Table~\ref{table_Levi_sl} and Case~\ref{sl_sp_123}, computing $\left. R_G(\pi_2) \right|_H$ by using Case~\ref{sp_spsp12}, and comparing with~(\ref{eqn_2pi3_for_sp}) we finally obtain
\[
\left. R_G(2\pi_3) \right|_H \simeq W_{3,3}(G,H) \oplus R_H(\pi_2 + \pi'_2),
\]
whence $(2\pi_3, \pi_2 + \pi'_2)$ is the last indecomposable element of $\Gamma_I(G,H)$.

\subsubsection{Case~\textup{\ref{sp_spspn}}}
$q \ge 2$, $I = \lbrace n \rbrace$. The pair $(M', \mathfrak p_I^u \cap \mathfrak g^{-\theta})$ is equivalent to $(\SL_p \times \SL_q$, $\FF^p \otimes \FF^q)$, hence $\rk \Gamma_I(G,H) = q + 1$.

Observe from Proposition~\ref{prop_sp_spsp_i} that the formulas for $\rk \Gamma_I(G,H)$ and the indecomposable elements of $\Gamma_I(G,H)$ hold true for $q=1$. Then the required result is proved by induction on~$q$ as follows. Suppose that for a given $q$ the indecomposable elements of $\Gamma_I(G,H)$ have the required form and $p - q \ge 2$. In particular,
\begin{equation} \label{eqn_pi_n_res_H}
\left. R_G(\pi_n) \right|_H \simeq \bigoplus \limits_{0 \le k \le q} R_H(\pi_{p-k} + \pi'_{q-k}).
\end{equation}
Consider the groups $H_1 = \Sp_{2(p-1)} \times \Sp_{2(q+1)}$ and $K = \Sp_{2(p-1)} \times \Sp_2 \times \Sp_{2q}$. Using Proposition~\ref{prop_sp_spsp_i}, we compute the restrictions to $K$ of the right-hand side of~(\ref{eqn_pi_n_res_H}) and of the $H_1$-module
\[
V = \bigoplus \limits_{0 \le k \le q+1} R_H(\pi_{p-1-k} + \pi'_{q+1-k})
\]
and find that $\left. R_G(\pi_n) \right|_K \simeq \left. V \right|_K$. As $\rk H_1 = n = \rk K$, by Proposition~\ref{prop_equal_rank_res} the latter implies that $\left. R_G(\pi_n) \right|_{H_1} \simeq V$, which yields all the required indecomposable elements of $\Gamma_I(G,H_1)$.

\subsubsection{Case~\textup{\ref{sp_spspi}}}

$q = 2$, $I = \lbrace i \rbrace$, $4 \le i \le n-1$. The pair $(M', \mathfrak p_I^u \cap \mathfrak g^{-\theta})$ is equivalent to $(\SL_2 \times \SL_{i-2} \times \Sp_{2n-2i}, \FF^2 \otimes (\FF^{i-2} \oplus\FF^{2n-2i}))$, hence $\rk \Gamma_I(G,H) = 7 - \delta_i^{n-1}$.

Consider the $H$-module
\[
V = R_H(\pi_i) \oplus R_H(\pi_{i-1} + \pi'_1) \oplus R_H(\pi_{i-2} + \pi'_2) \oplus R_H(\pi_{i-2}) \oplus R_H(\pi_{i-3} + \pi'_1) \oplus R_H(\pi_{i-4})
\]
where the first summand is present for $i \le n-2$. Next, consider the group $K = \Sp_{2p} \times \Sp_2 \times \Sp_2 \subset H$. Using Proposition~\ref{prop_sp_spsp_i}, we compute $\left. V \right|_K$ and $\left. R_G(\pi_i) \right|_K$ through the chain $G \supset \Sp_{2n-2} \times \Sp_2 \supset K$ and find that $\left. V \right|_K \simeq \left. R_G(\pi_i) \right|_K$. As $\rk H = n = \rk K$, Proposition~\ref{prop_equal_rank_res} implies that $\left. R_G(\pi_i) \right|_H \simeq V$, which yields all indecomposable elements of $\Gamma_I(G,H)$ of the form $(\pi_i; *)$.

Now consider the $H$-module $V = W_{i,i}(G,H) \oplus R_H(\pi_{i-1} + \pi_{i-3} + \pi'_2)$. Again, using Proposition~\ref{prop_sp_spsp_i} we compute $\left. V \right|_K$ and $\left. R_G(2\pi_i) \right|_K$ through the chain $G \supset \Sp_{2n-2} \times \Sp_2 \supset K$ and find that $\left. V \right|_K \simeq \left. R_G(2\pi_i) \right|_K$. Again, Proposition~\ref{prop_equal_rank_res} implies that $\left. R_G(2\pi_i) \right|_H \simeq V$, whence $(2\pi_i; \pi_{i-1} + \pi_{i-3} + \pi'_2)$ is the last indecomposable element of $\Gamma_I(G,H)$.

\subsection{Case~\ref{spin_odd}}
\label{subsec_spin_odd}

$G = \Spin_{2n+1}$, $H = \Spin_p \times \Spin_q$, $p + q = 2n + 1 \ge 7$, $p$ is even, $q$ is odd.

\subsubsection{Case~\textup{\ref{spin_odd_q=1}}}
$q = 1$, $I = \Pi$. The pair $(M', \mathfrak p_I^u \cap \mathfrak g^{-\theta})$ is equivalent to $(\lbrace e \rbrace, \FF^n)$, hence $\rk \Gamma_I(G,H) = 2n$.

The indecomposable elements of $\Gamma_I(G,H)$ are taken from~\cite[Theorem~7, part~2]{AkP}.

\subsubsection{Case~\textup{\ref{spin_odd_I=1}}}
$p,q \ge 3$, $I = \lbrace 1 \rbrace$. The pair $(M', \mathfrak p_I^u \cap \mathfrak g^{-\theta})$ is equivalent to $(\SO_q, \FF^q)$, hence $\rk \Gamma_I(G,H) = 3$.

Clearly, $\left.R_G(\pi_1)\right|_H \simeq R_H(\pi_1) \oplus R_H(\pi'_1)$, which yields all indecomposable elements of $\Gamma_I(G,H)$ of the form $(\pi_1; *)$. To find the decomposition of $\left. R_G(2\pi_1)\right|_H$ into irreducible components, observe that the action of $G$ (resp.~$H$) on $R_G(2\pi_1)$ descends to an action of $\SO_n$ (resp. $\SO_p \times \SO_q$) and consider the group $F = \SL_{2n+1}$. It is easy to see from Case~\ref{sl_so_odd_i} that
\begin{equation} \label{eqn_2pi1}
\left. R_F(2\pi_1) \right|_G \simeq R_G(2\pi_1) \oplus R_G(0).
\end{equation}
Now, we put $K = \SL_p \times \SL_q$ and restrict $R_F(2\pi_1)$ to $H$ through the chain $F \supset K \supset \SO_p \times \SO_q$. Using Proposition~\ref{prop_sl_Levi_pq_i} we get
\[
\left. R_F(2\pi_1) \right|_{K} \simeq
R_{K}(2\pi_1) \oplus R_{K}(\pi_1 + \pi'_1) \oplus R_{K}(2\pi'_1).
\]
Then applying Cases~\ref{sl_so_odd_i} and~\ref{sl_so_even_i} and comparing with~(\ref{eqn_2pi1}), we finally obtain
\[
\left. R_G(2\pi_1) \right|_H \simeq W_{1,1}(G,H) \oplus R_H(0),
\]
whence $(2\pi_1; 0)$ is the last indecomposable element of $\Gamma_I(G,H)$.

\subsubsection{Case~\textup{\ref{spin_odd_n}}}
$p,q \ge 3$, $I = \lbrace n \rbrace$. The pair $(M', \mathfrak p_I^u \cap \mathfrak g^{-\theta})$ is equivalent to
\[
(\SL_{p/2} \times \SL_{(q-1)/2}, \FF^{p/2} \otimes (\FF^{(q-1)/2} \oplus \FF^1)),
\]
hence $\rk \Gamma_I(G,H) = \min(p,q) + 1$.

Restricting the representation $R_G(\pi_n)$ through the chains
\[
\Spin_{2n+1} \supset \Spin_{2n} \supset \ldots \supset \Spin_p \quad \text{and} \quad \Spin_{2n+1} \supset \Spin_{2n} \supset \ldots \supset \Spin_q
\]
by using Proposition~\ref{prop_spin_res}, we obtain that $\left.R_G(\pi_n)\right|_{\Spin_p}$ is isomorphic to the direct sum of $2^{\frac{q-1}2}$ copies of $R_{\Spin_p}(\pi_{\frac{p}2-1}) \oplus R_{\Spin_p}(\pi_{\frac{p}2})$
and $\left.R_G(\pi_n)\right|_{\Spin_q}$ is isomorphic to the direct sum of $2^{\frac{p}2}$ copies of $R_{\Spin_q}(\pi_{\frac{q-1}2})$. As $\dim (R_{\Spin_p}(\pi_{\frac{p}2-1}) \oplus R_{\Spin_p}(\pi_{\frac{p}2})) = 2^{\frac{p}2}$, $\dim R_{\Spin_q}(\pi_{\frac{q-1}2}) = 2^{\frac{q-1}2}$, and the subgroups $\Spin_p$ and $\Spin_q$ commute, it follows that
\[
\left.R_G(\pi_n)\right|_H \simeq R_H(\pi_{\frac p2-1} + \pi_{\frac{q-1}2}') \oplus R_H(\pi_{\frac p2} + \pi_{\frac{q-1}2}'),
\]
which yields all indecomposable elements of $\Gamma_I(G,H)$ of the form $(\pi_n; *)$. To find the decomposition of $\left. R_G(2\pi_n) \right|_H$ into irreducible summands, observe that the action of $G$ (resp.~$H$) on $R_G(2\pi_n)$ descends to an action of $\SO_n$ (resp. $\SO_p \times \SO_q$) and consider the group $F = \SL_{2n+1}$. It is easy to see from Case~\ref{sl_so_odd_i} that
\begin{equation} \label{eqn_pi_n}
\left. R_F(\pi_n) \right|_G \simeq R_G(2\pi_n).
\end{equation}
Now, we put $K = \SL_p \times \SL_q$ and restrict $R_F(\pi_n)$ to $H$ through the chain $F \supset K \supset \SO_p \times \SO_q$.

Suppose $p < q$. Then using Proposition~\ref{prop_sl_Levi_pq_i} we get
\[
\left. R_F(\pi_n) \right|_{K} \simeq
\bigoplus \limits_{0 \le k \le p} R_{K}(\pi_{k} + \pi'_{n-k}).
\]
Now applying Cases~\ref{sl_so_odd_i} and \ref{sl_so_even_i} and comparing with (\ref{eqn_pi_n}) we obtain
\begin{multline} \label{eqn_min=p}
\left. R_G(2\pi_n) \right|_H \simeq W_{n,n}(G,H) \oplus R_H(\pi_{\frac{p}2-1} + \pi_{\frac{p}2} + \pi'_{\frac{q-3}2}) \oplus \\
\bigoplus \limits_{2 \le k \le \frac{p}2} R_H(\pi_{\frac{p}2-k} + \pi'_{\frac{q-1}2-k}) \oplus
\bigoplus \limits_{2 \le k \le \frac{p}2} R_H(\pi_{\frac{p}2-k} + \pi'_{\frac{q+1}2-k}).
\end{multline}

Suppose $p > q$. Then using Proposition~\ref{prop_sl_Levi_pq_i} we get
\begin{equation} \label{eqn_pi_n_res_K}
\left. R_F(\pi_n) \right|_{K} \simeq
\bigoplus \limits_{0 \le k \le q} R_{K}(\pi_{n-k} + \pi'_k).
\end{equation}
Now applying Cases~\ref{sl_so_odd_i} and~\ref{sl_so_even_i} and comparing with (\ref{eqn_pi_n}) we obtain
\begin{multline} \label{eqn_min=q}
\left. R_G(2\pi_n) \right|_H \simeq W_{n,n}(G,H) \oplus
R_H(\pi_{\frac{p}2-1} + \pi_{\frac{p}2} + \pi'_{\frac{q-3}2}) \oplus \\
\bigoplus \limits_{2 \le k \le \frac{q-1}2} R_H(\pi_{\frac{p}2-k} + \pi'_{\frac{q-1}2-k}) \oplus
\bigoplus \limits_{2 \le k \le \frac{q+1}2} R_H(\pi_{\frac{p}2-k} + \pi'_{\frac{q+1}2-k}).
\end{multline}
From~(\ref{eqn_min=p}) and~(\ref{eqn_min=q}) we get all indecomposable elements of $\Gamma_I(G,H)$ of the form $(2\pi_n; *)$.

\subsection{Case~\ref{spin_even}}

$G = \Spin_{2n}$, $H = \Spin_p \times \Spin_q$, $p \ge q \ge 1$, $p + q = 2n \ge 8$.

\subsubsection{Case~\textup{\ref{spin_even_q=1}}}
$p = 1$, $I = \Pi$. The pair $(M', \mathfrak p_I^u \cap \mathfrak g^{-\theta})$ is equivalent to $(\lbrace e \rbrace, \FF^{n-1})$, whence $\rk \Gamma_I(G,H) = 2n-1$.

The indecomposable elements of $\Gamma_I(G,H)$ are taken from~\cite[Theorem~7, part~3]{AkP}.

\subsubsection{Case~\textup{\ref{spin_even_I=1}}}
$p \ge 3$, $I = \lbrace 1 \rbrace$. The pair $(M', \mathfrak p_I^u \cap \mathfrak g^{-\theta})$ is equivalent to $(\SO_q, \FF^q)$, hence $\rk \Gamma_I(G,H) = 3$.

The remaining argument repeats that for Case~\ref{spin_odd_I=1}.

\subsubsection{Case~\textup{\ref{spin_even_n_pq_odd}}} \label{subsubsec_spin_pq_odd}

$q \ge 3$, $I = \lbrace n \rbrace$, $p,q$ are odd. The pair $(M', \mathfrak p_I^u \cap \mathfrak g^{-\theta})$ is equivalent to $(\SL_{(p-1)/2} \times \SL_{(q-1)/2}, \FF^{(p-1)/2} \otimes \FF^{(q-1)/2})$, hence $\rk \Gamma_I(G,H) = \frac{q+1}2 = [\frac q2] + 1$.

Since the triple $(G,H,\lbrace n-1 \rbrace)$ is obtained from $(G,H, \lbrace n \rbrace)$ by an outer automorphism of $G$ interchanging the $(n-1)$th and $n$th simple roots, it follows that $H$ acts spherically on $X_{\lbrace n-1 \rbrace}$ as well, which will be used in this proof.

Restricting the representation $R_G(\pi_{n})$ through the chains
\[
\Spin_{2n} \supset \Spin_{2n-1} \supset \ldots \supset \Spin_p \quad \text{and} \quad \Spin_{2n} \supset \Spin_{2n-1} \supset \ldots \supset \Spin_q
\]
by using Proposition~\ref{prop_spin_res}, we find that $\left.R_G(\pi_{n})\right|_{\Spin_p}$ is isomorphic to the direct sum of $2^{\frac{q-1}2}$ copies of $R_{\Spin_p}(\pi_{\frac{p-1}2})$
and $\left.R_G(\pi_{n})\right|_{\Spin_q}$ is isomorphic to the direct sum of $2^{\frac{p-1}2}$ copies of $R_{\Spin_q}(\pi_{\frac{q-1}2})$. As $\dim R_{\Spin_p}(\pi_{\frac{p-1}2}) = 2^{\frac{p-1}2}$, $\dim R_{\Spin_q}(\pi_{\frac{q-1}2}) = 2^{\frac{q-1}2}$, and the subgroups $\Spin_p$ and $\Spin_q$ commute, it follows that
\[
\left.R_G(\pi_{n})\right|_H \simeq R_H(\pi_{\frac{p-1}2} + \pi_{\frac{q-1}2}'),
\]
which yields all indecomposable elements of $\Gamma_I(G,H)$ of the form $(\pi_{n}; *)$. To find the decomposition of $\left. R_G(2\pi_{n}) \right|_H$ into irreducible summands, observe that the actions of $G$ (resp.~$H$) on $R_G(2\pi_{n-1})$ and $R_G(2\pi_n)$ descend to actions of $\SO_n$ (resp. $\SO_p \times \SO_q$) and consider the group $F = \SL_{2n}$. It is easy to see from Case~\ref{sl_so_odd_i} that
\begin{equation} \label{eqn_pi_n_bis}
\left. R_F(\pi_{n}) \right|_G \simeq R_G(2\pi_{n-1}) \oplus R_G(2\pi_n).
\end{equation}
Now, we put $K = \SL_p \times \SL_q$ and restrict $R_F(\pi_n)$ to $H$ through the chain $F \supset K \supset \SO_p \times \SO_q$. Using Proposition~\ref{prop_sl_Levi_pq_i} we get~(\ref{eqn_pi_n_res_K}).
Now applying Case~\ref{sl_so_odd_i} and comparing with (\ref{eqn_pi_n_bis}) we obtain
\[
\left.(R_G(2\pi_{n-1}) \oplus R_G(2\pi_n))\right|_H \simeq 2R_H(2\pi_{\frac{p-1}2} + 2\pi'_{\frac{q-1}2}) \oplus 2
\bigoplus \limits_{0 \le k \le \frac{q-3}2} R_H(\pi_{\frac{p-q}2+k}+ \pi'_k).
\]
Since we know that both modules $\left. R_G(2\pi_{n-1})\right|_H$ and $\left. R_G(2\pi_{n})\right|_H$ are multiplicity-free, it follows that
\[
\left. R_G(2\pi_{n})\right|_H \simeq W_{n,n}(G,H) \oplus \bigoplus  \limits_{0 \le k \le \frac{q-3}2} R_H(\pi_{\frac{p-q}2+k}+ \pi'_k),
\]
which yields all indecomposable elements of $\Gamma_I(G,H)$ of the form $(2\pi_{n}; *)$.

\subsubsection{Case~\textup{\ref{spin_even_n_pq_even}}}

$q \ge 3$, $I = \lbrace n \rbrace$, $p,q$ are even. The pair $(M', \mathfrak p_I^u \cap \mathfrak g^{-\theta})$ is equivalent to $(\SL_{p/2} \times \SL_{q/2}, \FF^{p/2} \otimes \FF^{q/2})$, hence $\rk \Gamma_I(G,H) = \frac q2 + 1$.

Since the triple $(G,H,\lbrace n-1 \rbrace)$ is obtained from $(G,H, \lbrace n \rbrace)$ by an outer automorphism of $G$ interchanging the $(n-1)$th and $n$th simple roots, it follows that $H$ acts spherically on $X_{\lbrace n-1 \rbrace}$ as well, which will be used in this proof.

Restricting the representations $R_G(\pi_{n-1})$ and $R_G(\pi_n)$ through the chains
\[
\Spin_{2n} \supset \Spin_{2n-1} \supset \ldots \supset \Spin_p \quad \text{and} \quad \Spin_{2n} \supset \Spin_{2n-1} \supset \ldots \supset \Spin_q
\]
by using Proposition~\ref{prop_spin_res}, we obtain that $\left.R_G(\pi_{n-1})\right|_{\Spin_p} \simeq \left.R_G(\pi_n)\right|_{\Spin_p}$ is isomorphic to the direct sum of $2^{\frac{q}2-1}$ copies of $R_{\Spin_p}(\pi_{\frac{p}2-1}) \oplus\nobreak R_{\Spin_p}(\pi_{\frac{p}2})$
and $\left.R_G(\pi_{n-1})\right|_{\Spin_q} \simeq \left.R_G(\pi_n)\right|_{\Spin_q}$ is isomorphic to the direct sum of $2^{\frac{p}2-1}$ copies of $R_{\Spin_q}(\pi_{\frac{q}2-1}) \oplus R_{\Spin_q}(\pi_{\frac{q}2})$. As $\dim R_{\Spin_p}(\pi_{\frac{p}2-1}) = \dim R_{\Spin_p}(\pi_{\frac{p}2}) = 2^{\frac{p}2-1}$, $\dim R_{\Spin_q}(\pi_{\frac{q}2-1}) = 2^{\frac{q}2-1}$, and the subgroups $\Spin_p$ and $\Spin_q$ commute, it follows that each of $\left. R_G(\pi_{n-1}) \right|_H$ and $\left. R_G(\pi_n) \right|_H$ is isomorphic to one of the two modules
\[
V_1 = R_H(\pi_{\frac{p}2-1} + \pi'_{\frac{q}2-1}) \oplus R_H(\pi_{\frac{p}2} + \pi'_{\frac{q}2}) \ \, \text{or} \ \,
V_2 = R_H(\pi_{\frac{p}2-1} + \pi'_{\frac{q}2}) \oplus R_H(\pi_{\frac{p}2} + \pi'_{\frac{q}2-1})
\]
Changing the order (numbering) of the last two simple roots of $\Spin_p$ (or $\Spin_q$) if necessary, we may assume that $\left. R_G(\pi_{n}) \right|_H \simeq V_1$, which provides all indecomposable elements of $\Gamma_I(G,H)$ of the form $(\pi_{n}; *)$. As we shall see below, it then follows that $\left. R_G(\pi_{n-1}) \right|_H \simeq V_2$.

To find the decomposition of $\left. R_G(2\pi_{n-1}) \right|_H$ and $\left. R_G(2\pi_n) \right|_H$ into irreducible components, observe that the actions of $G$ (resp.~$H$) on $R_G(2\pi_{n-1})$ and $R_G(2\pi_n)$ descend to actions of $\SO_n$ (resp. $\SO_p \times \SO_q$) and consider the group $F = \SL_{2n}$. As in \S\,\ref{subsubsec_spin_pq_odd} we again have~(\ref{eqn_pi_n_bis}).

Now, we put $K = \SL_p \times \SL_q$ and restrict $R_F(\pi_n)$ to $H$ through the chain $F \supset K \supset \SO_p \times \SO_q$. Using Proposition~\ref{prop_sl_Levi_pq_i} we get~(\ref{eqn_pi_n_res_K}).
Now applying Case~\ref{sl_so_odd_i} and comparing with (\ref{eqn_pi_n_bis}) we obtain
\begin{multline} \label{eqn_spin_even}
\left.(R_G(2\pi_{n-1}) \oplus R_G(2\pi_n))\right|_H \simeq \\
R_H(2\pi_{\frac{p}2-1} + 2\pi'_{\frac{q}2-1}) \oplus
R_H(2\pi_{\frac{p}2-1} + 2\pi'_{\frac{q}2}) \oplus
R_H(2\pi_{\frac{p}2} + 2\pi'_{\frac{q}2-1}) \oplus
R_H(2\pi_{\frac{p}2} + 2\pi'_{\frac{q}2}) \oplus \\
2R_H(\pi_{\frac{p}2-1} + \pi_{\frac{p}2} + \pi'_{\frac{q}2-1} + \pi'_{\frac{q}2}) \oplus
2\bigoplus \limits_{0 \le k \le \frac{q}2-2}
R_H(\pi_{\frac{p-q}2+k} + \pi'_k).
\end{multline}
As the left-hand side of~(\ref{eqn_spin_even}) contains $W_{n-1,n-1}(G,H) \oplus W_{n,n}(G,H)$ as a submodule and the right-hand side contains only one simple submodule isomorphic to $R_H(2\pi_{\frac{p}2-1} + 2 \pi'_{\frac{q}2-1})$, the module $\left. R_G(\pi_{n-1}) \right|_H$ cannot be isomorphic to~$V_1$, therefore $\left. R_G(\pi_{n-1}) \right|_H \simeq V_2$.

Since we know that both modules $\left. R_G(2\pi_{n-1})\right|_H$ and $\left. R_G(2\pi_{n})\right|_H$ are multiplicity-free, it follows from~(\ref{eqn_spin_even}) that
\[
\left. R_G(2\pi_{n})\right|_H \simeq W_{n,n}(G,H) \oplus \bigoplus \limits_{0 \le k \le \frac{q}2-2} R_H(\pi_{\frac{p-q}2+k} + \pi'_k),
\]
which yields all indecomposable elements of $\Gamma_I(G,H)$ of the form $(2\pi_{n}; *)$.

\subsection{Case~\ref{f4_b4}}

$G = \mathsf F_4$, $H = \mathsf B_4$. The restriction matrix is
\[
\begin{pmatrix}
0 & 1 & 0 & 0\\
1 & 0 & 1 & 0\\
1 & 0 & 0 & 1\\
1 & 0 & 0 & 0\\
\end{pmatrix}.
\]

\subsubsection{Case~\textup{\ref{f4_b4_14}}}

$I = \lbrace 1, 4 \rbrace$. The pair $(M', \mathfrak p_I^u \cap \mathfrak g^{-\theta})$ is equivalent to $(\Sp_4, \FF^4 \oplus \FF^4)$, hence $\rk \Gamma_I(G,H) = 6$.

Computations using LiE show that
\[
\left.R_G(\pi_1)\right|_H \simeq R_H(\pi_2) \oplus R_H(\pi_4) \quad \text{and} \quad \left.R_G(\pi_4)\right|_H \simeq R_H(\pi_1) \oplus R_H(\pi_4) \oplus R_H(0),
\]
which provides all indecomposable elements of $\Gamma_I(G,H)$ of the form $(\pi_1; *)$ and of the form $(\pi_4; *)$. Another computation yields
$\left.R_G(\pi_1 + \pi_4)\right|_H \simeq W_{1,4}(G,H) \oplus R_H(\pi_3)$, hence the last indecomposable element is $(\pi_1 + \pi_4; \pi_3)$.

\subsubsection{Case~\textup{\ref{f4_b4_2}}}

$I = \lbrace 2 \rbrace$. The pair $(M', \mathfrak p_I^u \cap \mathfrak g^{-\theta})$ is equivalent to $(\SL_2 \times \SL_2, \FF^2 \otimes (\FF^2 \oplus \FF^1))$, hence $\rk \Gamma_I(G,H) = 5$.

A computation using LiE shows that
\[
\left.R_G(\pi_2)\right|_H \simeq R_H(\pi_2) \oplus R_H(\pi_3) \oplus R_H(\pi_1 + \pi_3) \oplus R_H(\pi_1 + \pi_4) \oplus R_H(\pi_2 + \pi_4),
\]
which yields all indecomposable elements of $\Gamma_I(G,H)$ of the form $(\pi_2; *)$. This already gives the required number of indecomposable elements.

\subsubsection{Case~\textup{\ref{f4_b4_3}}}

$I = \lbrace 3 \rbrace$. The pair $(M', \mathfrak p_I^u \cap \mathfrak g^{-\theta})$ is equivalent to $(\SL_3, \FF^3 \oplus (\FF^3)^* \oplus \FF^1)$, hence $\rk \Gamma_I(G,H) = 5$.

A computation using LiE shows that
\[
\left.R_G(\pi_3)\right|_H \simeq R_H(\pi_1) \oplus R_H(\pi_2) \oplus R_H(\pi_3) \oplus R_H(\pi_4) \oplus R_H(\pi_1 + \pi_4),
\]
which yields all indecomposable elements of $\Gamma_I(G,H)$ of the form $(\pi_3; *)$. This already gives the required number of indecomposable elements.

\subsection{Case~\ref{e6_c4}}

$G = \mathsf E_6$, $H = \mathsf C_4$. The restriction matrix is
\[
\begin{pmatrix}
0 & 1 & 0 & 0\\
2 & 0 & 0 & 0\\
1 & 0 & 1 & 0\\
2 & 0 & 0 & 1\\
1 & 0 & 1 & 0\\
0 & 1 & 0 & 0\\
\end{pmatrix}
\]

\subsubsection{Case~\textup{\ref{e6_c4_1}}}

$I = \lbrace 1 \rbrace$.
The pair $(M', \mathfrak p_I^u \cap \mathfrak g^{-\theta})$ is equivalent to $(\SO_5, \FF^5)$, hence $\rk \Gamma_I(G,H) = 3$.

A computation using LiE shows that $\left.R_G(\pi_1)\right|_H \simeq R_H(\pi_2)$, which provides a unique indecomposable element of $\Gamma_I(G,H)$ of the form $(\pi_1; *)$. Another computation yields
$\left.R_G(2\pi_1)\right|_H \simeq W_{1,1}(G,H) \oplus R_H(\pi_4) \oplus R_H(0)$, which gives two more indecomposable elements of $\Gamma_I(G,H)$ of the form $(2\pi_1; *)$.

\subsection{Case~\ref{e6_a5xa1}}

$G = \mathsf E_6$, $H = \mathsf A_5 \cdot \mathsf A_1$. The restriction matrix is
\[
\begin{pmatrix}
1 & 0 & 0 & 0 & 0 & 1\\
0 & 0 & 0 & 0 & 0 & 2\\
0 & 1 & 0 & 0 & 0 & 2\\
0 & 0 & 1 & 0 & 0 & 3\\
0 & 0 & 0 & 1 & 0 & 2\\
0 & 0 & 0 & 0 & 1 & 1\\
\end{pmatrix},
\]
where the fundamental weights of $\mathsf A_5$ correspond to columns 1, \ldots, 5 and the fundamental weight of $\mathsf A_1$ corresponds to the last column.

\subsubsection{Case~\textup{\ref{e6_a5xa1_1}}}

$I = \lbrace 1 \rbrace$.
The pair $(M', \mathfrak p_I^u \cap \mathfrak g^{-\theta})$ is equivalent to $(\SL_5, \wedge^2 \FF^5)$, hence $\rk \Gamma_I(G,H) = 3$.

A computation using LiE shows that $\left.R_G(\pi_1)\right|_H \simeq R_H(\pi_4) \oplus R_H(\pi_1 + \pi'_1)$, which yields all indecomposable elements of $\Gamma_I(G,H)$ of the form $(\pi_1; *)$. Another computation yields $\left.R_G(2\pi_1)\right|_H \simeq W_{1,1}(G, H) \oplus R_H(\pi_2)$, hence $(2\pi_1; \pi_2)$ is the last indecomposable element of $\Gamma_I(G,H)$.

\subsection{Case~\ref{e6_f4}}

$G = \mathsf E_6$, $H = \mathsf F_4$. The restriction matrix is
\[
\begin{pmatrix}
0 & 0 & 0 & 1 \\
1 & 0 & 0 & 0 \\
0 & 0 & 1 & 0 \\
0 & 1 & 0 & 0 \\
0 & 0 & 1 & 0 \\
0 & 0 & 0 & 1 \\
\end{pmatrix}.
\]

\subsubsection{Case~\textup{\ref{e6_f4_12}}}

$I = \lbrace 1, 2\rbrace$.
The pair $(M', \mathfrak p_I^u \cap \mathfrak g^{-\theta})$ is equivalent to $(\Sp_4, \FF^4 \oplus \FF^1\oplus \FF^1)$, hence $\rk \Gamma_I(G,H) = 5$.

Computations using LiE show that
\begin{equation} \label{Res_E6_to_F4_1}
\left.R_G(\pi_1)\right|_H \simeq R_H(\pi_4) \oplus R_H(0)
\end{equation}
and
$\left.R_G(\pi_2)\right|_H \simeq R_H(\pi_1) \oplus R_H(\pi_4)$, which yields all indecomposable elements of $\Gamma_I(G,H)$ of the form $(\pi_1; *)$ and of the form $(\pi_2; *)$. Another computation  yields $\left.R_G(\pi_1 + \pi_2)\right|_H \simeq W_{1,2}(G,H) \oplus R_H(\pi_3)$, hence $(\pi_1 + \pi_2; \pi_3)$ is the last indecomposable element of $\Gamma_I(G,H)$.

\subsubsection{Case~\textup{\ref{e6_f4_13}}}

$I = \lbrace 1, 3 \rbrace$. The pair $(M', \mathfrak p_I^u \cap \mathfrak g^{-\theta})$ is equivalent to $(\SL_3, \FF^3 \oplus \FF^1 \oplus \FF^1)$, hence $\rk \Gamma_I(G,H) = 5$.

A computation using LiE shows that $\left.R_G(\pi_3)\right|_H \simeq R_H(\pi_1) \oplus R_H(\pi_3) \oplus R_H(\pi_4)$. From this and~(\ref{Res_E6_to_F4_1}) we find all indecomposable elements of $\Gamma_I(G,H)$ of the form $(\pi_1; *)$ and $(\pi_3; *)$, which already gives the required number of indecomposable elements.

\subsection{Case~\ref{e7_a7}}

$G = \mathsf E_7$, $H = \mathsf A_7$. The restriction matrix is
\[
\begin{pmatrix}
1 & 0 & 0 & 0 & 0 & 0 & 1\\
0 & 0 & 0 & 0 & 0 & 0 & 2\\
0 & 1 & 0 & 0 & 0 & 0 & 2\\
0 & 0 & 1 & 0 & 0 & 0 & 3\\
0 & 0 & 0 & 1 & 0 & 0 & 2\\
0 & 0 & 0 & 0 & 1 & 0 & 1\\
0 & 0 & 0 & 0 & 0 & 1 & 0
\end{pmatrix}.
\]

\subsubsection{Case~\textup{\ref{e7_a7_7}}}

$I = \lbrace 7 \rbrace$. The pair $(M', \mathfrak p_I^u \cap \mathfrak g^{-\theta})$ is equivalent to $(\SL_6, \wedge^2 \FF^6)$, hence $\rk \Gamma_I(G,H) = 4$.

A computation using LiE shows that $\left.R_G(\pi_7)\right|_H \simeq R_H(\pi_2) \oplus R_H(\pi_6)$, which provides all indecomposable elements of $\Gamma_I(G,H)$ of the form $(\pi_7; *)$. Another computation yields $\left.R_G(2\pi_7)\right|_H \simeq W_{7,7}(G,H) \oplus R_H(\pi_4) \oplus R_H(0)$, which gives two more indecomposable elements of $\Gamma_I(G,H)$ of the form $(2\pi_7; *)$.

\subsection{Case~\ref{e7_d6xa1}}
\label{subsec_e7_de6xa1}

$G = \mathsf E_7$, $H = \mathsf D_6 \cdot \mathsf A_1$. The restriction matrix is
\[
\begin{pmatrix}
0 & 0 & 0 & 0 & 0 & 0 & 2\\
0 & 0 & 0 & 0 & 0 & 1 & 2\\
0 & 0 & 0 & 0 & 1 & 0 & 3\\
0 & 0 & 0 & 1 & 0 & 0 & 4\\
0 & 0 & 1 & 0 & 0 & 0 & 3\\
0 & 1 & 0 & 0 & 0 & 0 & 2\\
1 & 0 & 0 & 0 & 0 & 0 & 1\\
\end{pmatrix},
\]
where the fundamental weights of $\mathsf D_6$ correspond to columns 1, \ldots, 6 and the fundamental weight of $\mathsf A_1$ corresponds to the last column.

\subsubsection{Case~\textup{\ref{e7_d6xa1_7}}}

$I = \lbrace 7 \rbrace$. The pair $(M', \mathfrak p_I^u \cap \mathfrak g^{-\theta})$ is equivalent to $(\Spin_{10}, \FF^{16})$ where $\Spin_{10}$ acts on $\FF^{16}$ via a half-spinor representation, hence $\rk \Gamma_I(G,H) =3$.

A computation using LiE shows that $\left.R_G(\pi_7)\right|_H \simeq R_H(\pi_6) \oplus R_H(\pi_1 + \pi'_1)$, which provides all indecomposable elements of $\Gamma_I(G,H)$ of the form $(\pi_7; *)$. Another computation yields $\left.R_G(2\pi_7)\right|_H \simeq W_{7,7}(G, H) \oplus R_H(\pi_2)$, hence $(2\pi_7; \pi_2)$ is the last indecomposable element of $\Gamma_I(G,H)$.

\section{The monoids \texorpdfstring{$\Gamma_I(G,H)$ in case~(\ref{case_C3})}{{Gamma\_I(G,H) in case (C3)}}}
\label{sect_proofs_C3}

In this section, we compute the monoid $\Gamma_I(G,H)$ for each of the cases in Table~\ref{table_sl}.

\subsection{Preliminary remarks}

Throughout this section, the numbers of cases refer to Table~\ref{table_sl} unless otherwise specified. All the notation and conventions for that table are used without extra explanation.

We choose $T_G$ (resp. $B_G$, $B_G^-$) to be the subgroup of diagonal (resp. upper triangular, lower triangular) matrices in~$G$.

For explicit computations, we regard $\Sp_{2m}$ as the group of $(2m\times 2m)$-matrices preserving the skew-symmetric bilinear form with matrix
\[
\begin{pmatrix}
0 & A \\
-A & 0
\end{pmatrix}
\]
where $A$ is the $(m \times m)$-matrix with ones on the antidiagonal and zeros elsewhere.

In Cases~\ref{sl_spsl}--\ref{sl_spspsp} we assume that $H$ is embedded in~$G$ in the block-diagonal form so that the $i$th factor of $H'$ is embedded as the $i$th block. With the above convention for~$\Sp_{2m}$, we may (and do) assume that
\begin{equation} \label{eqn_sl_subgroups}
T_H = T_G \cap H, \quad B_H = B_G \cap H, \quad \text{and} \quad B_H^- = B_G^- \cap H.
\end{equation}
For Case~\ref{sl_spin}, a concrete embedding of $H$ in~$G$ is described in \S\,\ref{subsec_sl_spin}, this embedding satisfies~(\ref{eqn_sl_subgroups}) as well.

In each of the cases, formulas~(\ref{eqn_sl_subgroups}) imply that $Q = P_I^- \cap H$ is a parabolic subgroup of~$H$; we denote by $M$ the Levi subgroup of $Q$ containing~$T_H$.

Thanks to Corollary~\ref{crl_rank_of_Gamma_refined}, in all the cases $\mathfrak g / (\mathfrak p_I^- + \mathfrak h)$ is a spherical $M$-module and $\rk \Gamma_I(G,H) = |I| + \rk_{M} (\mathfrak g / (\mathfrak p_I^- + \mathfrak h))$. In Cases~\ref{sl_spsl}--\ref{sl_spspsp}, the pair $(M, \mathfrak g/(\mathfrak p_I^- + \mathfrak h))$ is easily computed, and we omit the details.

For each of the cases, the rank of the spherical $M$-module $\mathfrak g / (\mathfrak p_I^- + \mathfrak h)$ is always computed as described in~\S\,\ref{subsec_spherical_modules}; the information on the ranks of indecomposable saturated spherical modules is taken from~\cite[\S\,5]{Kn}. All conclusions on indecomposable elements of $\Gamma_I(G,H)$ are obtained by applying Propositions~\ref{prop_indec_I} and~\ref{prop_indec_II}.

\subsection{Case~\ref{sl_spsl}}
\label{subsec_sl_spsl}

$G = \SL_{n}$, $H' = \Sp_{2p} \times \SL_q$, $2p+q=n$, $p \ge 2$, $q \ge 1$. For all the cases in this subsection, we consider the intermediate subgroup $F = C_H \cdot (\SL_{2p} \times \SL_q)$, so that $H \subset F \subset G$.

\subsubsection{Case~\textup{\ref{sl_spsl_12}}}
$I = \lbrace 1,2 \rbrace$. The pair $(M', \mathfrak g / (\mathfrak p_I^- + \mathfrak h))$ is equivalent to $(\SL_q, \FF^q \oplus \FF^q \oplus \FF^1)$, hence $\rk \Gamma_I(G,H) = 6 - \delta_q^1$.

Restricting the representations $R_G(\pi_1)$ and $R_G(\pi_2)$ to $H$ through the chain $G \supset F \supset H$ by using the information for Case~\ref{sl_Levi_pq_1i_part1} of~Table~\ref{table_Levi_sl} and for Case~\ref{sl_sp_1ipart1} of Table~\ref{table_sym} we find that
\begin{gather*}
\left. R_G(\pi_1) \right|_H \simeq R_H(\pi_1 + \chi_1) \oplus R_H(\pi'_1 + \chi_2), \\
\left. R_G(\pi_2) \right|_H \simeq R_H(\pi_2 + 2\chi_1) \oplus R_H(2\chi_1) \oplus R_H(\pi_1 + \pi'_1 + \chi_1 + \chi_2) \oplus R_H(\pi'_2 \oplus 2\chi_2)
\end{gather*}
(the last summand is present for $q \ge 3$). This yields all indecomposable elements of~$\Gamma_I(G,H)$ of the form $(\pi_1; *)$ and of the form $(\pi_2; *)$, which already gives the required number of indecomposable elements.

\subsubsection{Case~\textup{\ref{sl_spsl_1n-1}}}
$I = \lbrace 1, n-1 \rbrace$. The pair $(M', \mathfrak g / (\mathfrak p_I^- + \mathfrak h))$ is equivalent to $(\SL_{q-1} \times \Sp_{2p-2}$, $\FF^{q-1} \oplus \FF^{2p-2} \oplus \FF^1 \oplus \FF^1)$, hence $\rk \Gamma_I(G,H) = 6 - \delta_q^1$.

Restricting the representations $R_G(\pi_1)$, $R_G(\pi_{n-1})$, and $R_G(\pi_1 + \pi_{n-1})$ to $H$ through the chain $G \supset F \supset H$ by using the information for Cases~\ref{sl_Levi_pq_1i_part1},\,\ref{sl_Levi_pq_1i_part2} of~Table~\ref{table_Levi_sl} and for Case~\ref{sl_sp_1ipart2} of Table~\ref{table_sym} we find that
\begin{gather*}
\left. R_G(\pi_1) \right|_H \simeq R_H(\pi_1 + \chi_1) \oplus R_H(\pi'_1 + \chi_2), \\
\left. R_G(\pi_{n-1}) \right|_H \simeq R_H(\pi_1 + (2p-1)\chi_1 + q\chi_2) \oplus R_H(\pi'_{q-1} + 2p \chi_1 + (q-1)\chi_2),
\end{gather*}
which yields all indecomposable elements of $\Gamma_I(G,H)$ of the form $(\pi_1; *)$ and of the form $(\pi_{n-1}; *)$, and
\[
\left. R_G(\pi_1 + \pi_{n-1}) \right|_H \simeq W_{1,n-1}(G,H) \oplus R_H(\pi_2 + 2p \chi_1 + q\chi_2) \oplus R_H(2p\chi_1 + q\chi_2)
\]
(the last summand is present for $q \ge 2$), which yields two more indecomposable elements of $\Gamma_I(G,H)$ of the form $(\pi_1 + \pi_{n-1}; *)$.

\subsubsection{Case~\textup{\ref{sl_spsl_i_q=1}}}
$q = 1$, $I = \lbrace i \rbrace$, $3 \le i \le p$. The pair $(M', \mathfrak g / (\mathfrak p_I^- + \mathfrak h))$ is equivalent to $(\SL_i, \wedge^2 \FF^i \oplus \FF^i)$, hence $\rk \Gamma_I(G,H) = i + 1$.

Restricting the representation $R_G(\pi_i)$ to $H$ through the chain $G \supset F \supset H$ by using the information for Case~\ref{sl_Levi_p1} of Table~\ref{table_Levi_sl} and for Case~\ref{sl_sp_1ipart1} of Table~\ref{table_sym} we find that
\[
\left. R_G(\pi_i) \right|_H \simeq \bigoplus \limits_{0 \le k \le [\frac i2]} R_H(\pi_{i-2k} + i \chi_1) \oplus \bigoplus \limits_{0 \le k \le [\frac{i-1}2]} R_H(\pi_{i-1-2k} + (i-1)\chi_1 + \chi_2).
\]
This yields all indecomposable elements of~$\Gamma_I(G,H)$ of the form $(\pi_i; *)$, which already gives the required number of indecomposable elements.

\subsubsection{Case~\textup{\ref{sl_spsl_3}}}
$q \ge 2$, $I = \lbrace 3 \rbrace$. The pair $(M', \mathfrak g / (\mathfrak p_I^- + \mathfrak h))$ is equivalent to $(\SL_q \times \SL_3$, $(\FF^q \otimes \FF^3) \oplus (\FF^3)^*)$ for $p \ge 3$ and to $(\SL_q \times \SL_2, \FF^q \otimes (\FF^2 \oplus \FF^1))$ for $p = 2$,
hence $\rk \Gamma_I(G,H) = 7 - \delta_p^2 - \delta_q^2$.

Restricting the representations $R_G(\pi_3)$ and $R_G(2\pi_3)$ to $H$ through the chain $G \supset F \supset H$ by using the information for Case~\ref{sl_Levi_pq_1i_part1} of Table~\ref{table_Levi_sl} and for Case~\ref{sl_sp_123} of Table~\ref{table_sym} we find that
\begin{multline*}
\left. R_G(\pi_3) \right|_H \simeq R_H(\pi_3 + 3\chi_1) \oplus R_H(\pi_1 + 3\chi_1) \oplus R_H(\pi_2 + \pi'_1 + 2\chi_1 + \chi_2) \oplus \\
R_H(\pi'_1 + 2\chi_1 + \chi_2) \oplus R_H(\pi_1 + \pi'_2 + \chi_1 + 2\chi_2) \oplus R_H(\pi'_3 + 3\chi_2)
\end{multline*}
(the first summand is present for $p \ge 3$ and the last summand is present for $q \ge 3$), which yields all indecomposable elements of $\Gamma_I(G,H)$ of the form $(\pi_3; *)$, and
\[
\left. R_G(2\pi_3) \right|_H \simeq W_{3,3} (G,H) \oplus R_H(\pi_2 + \pi'_2 + 4\chi_1 + 2\chi_2),
\]
which shows that the last indecomposable element of~$\Gamma_I(G,H)$ is $(2\pi_3; \pi_2 + \pi'_2 + 4\chi_1 + 2\chi_2)$.

\subsubsection{Case~\textup{\ref{sl_spsl_i_p=2}}}
$p = 2$, $I = \lbrace i \rbrace$, $4 \le i \le q$. The pair $(M', \mathfrak g / (\mathfrak p_I^- + \mathfrak h))$ is equivalent to $(\Sp_4 \times \SL_{n-i}, \FF^4 \otimes \FF^{n-i})$, hence $\rk \Gamma_I(G,H) = 7$.

Restricting the representations $R_G(\pi_i)$ and $R_G(2\pi_i)$ to $H$ through the chain $G \supset F \supset H$ by using the information for Case~\ref{sl_Levi_pq_1i_part1} of Table~\ref{table_Levi_sl} and for Case~\ref{sl_sp_123} of Table~\ref{table_sym} we find that
\begin{multline*}
\left. R_G(\pi_i) \right|_H \simeq R_H(\pi'_i + i \chi_2) \oplus R_H(\pi_1 + \pi'_{i-1} + \chi_1 + (i-1)\chi_2) \oplus \\
R_H(\pi_2 + \pi'_{i-2} + 2\chi_1 + (i-2)\chi_2) \oplus
R_H(\pi'_{i-2} + 2\chi_1 + (i-2)\chi_2) \oplus \\
R_H(\pi_1 + \pi'_{i-3} + 3\chi_1 + (i-3)\chi_2) \oplus
R_H(\pi'_{i-4} + 4\chi_1 + (i-4) \chi_2),
\end{multline*}
which yields all indecomposable elements of $\Gamma_I(G,H)$ of the form $(\pi_3; *)$, and
\[
\left. R_G(2\pi_i) \right|_H \simeq W_{i,i}(G,H) \oplus R_H(\pi_2 + \pi'_{i-1} + \pi'_{i-3} + 4\chi_1 + (2i-4) \chi_2),
\]
whence $(2\pi_i; \pi_2 + \pi'_{i-1} + \pi'_{i-3} + 4\chi_1 + (2i-4) \chi_2)$ is the last indecomposable element of~$\Gamma_I(G,H)$.

\subsection{Case~\ref{sl_spsp}}
\label{subsec_sl_spsp}

$G = \SL_{n}$, $H' = \Sp_{2p} \times \Sp_{2q}$, $2p+2q=n$, $p \ge q \ge 2$. For all the cases in this subsection, we consider the intermediate subgroup $F = C_H \cdot (\Sp_{2p} \times \SL_{2q})$, so that $H \subset F \subset G$.

\subsubsection{Case~\textup{\ref{sl_spsp_12}}}
$I = \lbrace 1, 2 \rbrace$. The pair $(M', \mathfrak g / (\mathfrak p_I^- + \mathfrak h))$ is equivalent to $(\Sp_{2q}, \FF^{2q} \oplus \FF^{2q} \oplus \FF^1)$, hence $\rk \Gamma_I(G,H) = 7$.

Restricting the representations $R_G(\pi_1)$ and $R_G(\pi_2)$ to $H$ through the chain $G \supset F \supset H$ by using the information for Case~\ref{sl_spsl_12} and for Case~\ref{sl_sp_1ipart1} (or~\ref{sl_sp_123}) of Table~\ref{table_sym} we find that
\begin{gather*}
\left. R_G(\pi_1) \right|_H \simeq R_H(\pi_1 + \chi_1) \oplus R_H(\pi'_1 + \chi_2), \\
\left. R_G(\pi_2) \right|_H \simeq R_H(\pi_2 {+} 2\chi_1) \oplus R_H(2\chi_1) \oplus R_H(\pi_1 {+} \pi'_1 {+} \chi_1 {+} \chi_2) \oplus R_H(\pi'_2 {+} 2\chi_2) \oplus R_H(2\chi_2).
\end{gather*}
This yields all indecomposable elements of~$\Gamma_I(G,H)$ of the form $(\pi_1; *)$ and of the form $(\pi_2; *)$, which already gives the required number of indecomposable elements.

\subsubsection{Case~\textup{\ref{sl_spsp_1n-1}}}
$I = \lbrace 1, n-1 \rbrace$. The pair $(M', \mathfrak g / (\mathfrak p_I^- + \mathfrak h))$ is equivalent to $(\Sp_{2p-2} \times \Sp_{2q-2}$, $\FF^{2p-2} \oplus \FF^{2q-2} \oplus \FF^1 \oplus \FF^1 \oplus \FF^1)$, hence $\rk \Gamma_I(G,H) = 7$.

Restricting the representations $R_G(\pi_1)$, $R_G(\pi_{n-1})$, and $R_G(\pi_1 + \pi_{n-1})$ to $H$ through the chain $G \supset F \supset H$ by using the information for Case~\ref{sl_spsl_1n-1} of~Table~\ref{table_Levi_sl} and for Case~\ref{sl_sp_1ipart2} of Table~\ref{table_sym} we find that
\begin{gather*}
\left. R_G(\pi_1) \right|_H \simeq R_H(\pi_1 + \chi_1) \oplus R_H(\pi'_1 + \chi_2), \\
\left. R_G(\pi_{n-1}) \right|_H \simeq R_H(\pi_1 + (2p-1)\chi_1 + 2q\chi_2) \oplus R_H(\pi'_1 + 2p \chi_1 + (2q-1)\chi_2),
\end{gather*}
which yields all indecomposable elements of $\Gamma_I(G,H)$ of the form $(\pi_1; *)$ and of the form $(\pi_{n-1}; *)$, and
\begin{multline*}
\left. R_G(\pi_1 + \pi_{n-1}) \right|_H \simeq W_{1,n-1}(G,H) \oplus \\
R_H(\pi_2 + 2p \chi_1 + 2q\chi_2) \oplus
R_H(\pi'_2 + 2p\chi_1 + 2q\chi_2) \oplus
R_H(2p\chi_1 + 2q\chi_2),
\end{multline*}
which yields all indecomposable elements of $\Gamma_I(G,H)$ of the form $(\pi_1 + \pi_{n-1}; *)$.

\subsection{Case~\ref{sl_spslsl}}
\label{subsec_sl_spslsl}

$G = \SL_{n}$, $H' = \Sp_{2p} \times \SL_q \times \SL_r$, $2p+q+r=n$, $p \ge 2$, $q \ge r \ge 1$. We consider the intermediate subgroup $F = C_H \cdot (\SL_{2p} \times \SL_q \times \SL_r)$, so that $H \subset F \subset G$.

\subsubsection{Case~\textup{\ref{sl_spslsl_2}}}
$I = \lbrace 2 \rbrace$. The pair $(M', \mathfrak g / (\mathfrak p_I^- + \mathfrak h))$ is equivalent to $(\SL_2 \times \SL_q \times \SL_r$, $\FF^2 \otimes (\FF^q \oplus \FF^r) \oplus \FF^1)$, hence $\rk \Gamma_I(G,H) = 7 - \delta_q^1 - \delta_r^1$.

Restricting the representation $R_G(\pi_2)$ to $H$ through the chain $G \supset F \supset H$ by using the information for Cases~\ref{sl_Levi_pq1_part1},\,\ref{sl_Levi_pqr} of Table~\ref{table_Levi_sl} and for Case~\ref{sl_sp_1ipart1} (or~\ref{sl_sp_123}) of Table~\ref{table_sym} we find that
\begin{multline*}
\left. R_G(\pi_2) \right|_H \simeq R_H(\pi_2 + 2\chi_1) \oplus
R_H(2\chi_1) \oplus R_H(\pi'_2 + 2\chi_2) \oplus
R_H(\pi''_2 + 2\chi_3) \oplus \\
R_H(\pi_1 + \pi'_1 + \chi_1 + \chi_2) \oplus
R_H(\pi_1 + \pi''_1 + \chi_1 + \chi_3) \oplus
R_H(\pi'_1 + \pi''_1 + \chi_2 + \chi_3)
\end{multline*}
where the summand $R_H(\pi'_2 + 2\chi_2)$ is present for $q \ge 2$ and the summand $R_H(\pi''_2 + 2\chi_3)$ is present for $r \ge 2$. This yields all indecomposable elements of~$\Gamma_I(G,H)$ of the form $(\pi_2; *)$, which already gives the required number of indecomposable elements.

\subsection{Case~\ref{sl_spspsl}}

$G = \SL_{n}$, $H' = \Sp_{2p} \times \Sp_{2q} \times \SL_r$, $2p+2q+r=n$, $p \ge q \ge 2$, $r \ge 1$. We consider the intermediate subgroup $F = C_H \cdot (\Sp_{2p} \times \SL_{2q} \times \SL_r)$, so that $H \subset F \subset G$.

\subsubsection{Case~\textup{\ref{sl_spspsl_2}}}
$I = \lbrace 2 \rbrace$. The pair $(M', \mathfrak g / (\mathfrak p_I^- + \mathfrak h))$ is equivalent to $(\SL_2 \times \Sp_{2q} \times \SL_r$, $\FF^2 \otimes (\FF^{2q} \oplus \FF^r) \oplus \FF^1)$, hence $\rk \Gamma_I(G,H) = 8 - \delta_r^1$.

Restricting the representation $R_G(\pi_2)$ to $H$ through the chain $G \supset F \supset H$ by using the information for Case~\ref{sl_spslsl_2} and for Case~\ref{sl_sp_1ipart1} (or~\ref{sl_sp_123}) of Table~\ref{table_sym} we find that
\begin{multline*}
\left. R_G(\pi_2) \right|_H \simeq R_H(\pi_2 + 2\chi_1) \oplus
R_H(2\chi_1) \oplus R_H(\pi'_2 + 2\chi_2) \oplus R_H(2\chi_2) \oplus
R_H(\pi''_2 + 2\chi_3) \oplus \\
R_H(\pi_1 + \pi'_1 + \chi_1 + \chi_2) \oplus
R_H(\pi_1 + \pi''_1 + \chi_1 + \chi_3) \oplus
R_H(\pi'_1 + \pi''_1 + \chi_2 + \chi_3)
\end{multline*}
where the summand $R_H(\pi''_2 + 2\chi_3)$ is present for $r \ge 2$. This yields all indecomposable elements of~$\Gamma_I(G,H)$ of the form $(\pi_2; *)$, which already gives the required number of indecomposable elements.

\subsection{Case~\ref{sl_spspsp}}

$G = \SL_{n}$, $H' = \Sp_{2p} \times \Sp_{2q} \times \Sp_{2r}$, $2p+2q+2r=n$, $p \ge q \ge r \ge 2$. We consider the intermediate subgroup $F = C_H \cdot (\Sp_{2p} \times \Sp_{2q} \times \SL_{2r})$, so that $H \subset F \subset G$.

\subsubsection{Case~\textup{\ref{sl_spspsp_2}}}
$I = \lbrace 2 \rbrace$. The pair $(M', \mathfrak g / (\mathfrak p_I^- + \mathfrak h))$ is equivalent to $(\SL_2 \times \Sp_{2q} \times \Sp_{2r}$, $\FF^2 \otimes (\FF^{2q} \oplus \FF^{2r}) \oplus \FF^1)$, hence $\rk \Gamma_I(G,H) = 9$.

Restricting the representation $R_G(\pi_2)$ to $H$ through the chain $G \supset F \supset H$ by using the information for Case~\ref{sl_spspsl_2} and for Case~\ref{sl_sp_1ipart1} (or~\ref{sl_sp_123}) of Table~\ref{table_sym} we find that
\begin{multline*}
\left. R_G(\pi_2) \right|_H \simeq R_H(\pi_2 + 2\chi_1) \oplus
R_H(2\chi_1) \oplus R_H(\pi'_2 + 2\chi_2) \oplus
R_H(2\chi_2) \oplus R_H(\pi''_2 + 2\chi_3) \oplus \\
R_H(2\chi_3) \oplus
R_H(\pi_1 {+} \pi'_1 {+} \chi_1 {+} \chi_2) \oplus
R_H(\pi_1 {+} \pi''_1 {+} \chi_1 {+} \chi_3) \oplus
R_H(\pi'_1 {+} \pi''_1 {+} \chi_2 {+} \chi_3).
\end{multline*}
This yields all indecomposable elements of~$\Gamma_I(G,H)$ of the form $(\pi_2; *)$, which already gives the required number of indecomposable elements.

\subsection{Case~\ref{sl_spin}}
\label{subsec_sl_spin}

$G = \SL_8$, $H = \Spin_7$. To describe the embedding of $H$ into~$G$, we first consider the group $K = \SO_8$ preserving the symmetric bilinear form on $\FF^8$ whose matrix consists of ones on the antidiagonal and zeros elsewhere. With this realization of~$K$, we may (and do) assume that $T_K = T_G \cap K$, $B_K = B_G \cap K$, and $B_K^- = B_G^- \cap K$. Choose the simple roots $\beta_1, \beta_2, \beta_3, \beta_4 \in \mathfrak X(T_K)$ in such a way that for any $t = \operatorname{diag}(t_1,t_2,t_3,t_4, t_4^{-1},t_3^{-1},t_2^{-1},t_1^{-1}) \in T_K$ one has $\beta_1(t) = t_1t_2^{-1}$, $\beta_2(t) = t_2t_3^{-1}$, $\beta_3(t) = t_3t_4^{-1}$, and $\beta_4(t) = t_3t_4$. Second, let $H_1 = \SO_7$ be the stabilizer in $K$ of the vector $e_4 - e_5$, where $e_i$ denotes the $i$th vector of the standard basis of~$\FF^8$. Now let $\mathfrak h \subset \mathfrak k$ be the image of $\mathfrak h_1$ under an outer automorphism of $\mathfrak k$ preserving $\mathfrak t_K$ and~$\mathfrak b_K$ and interchanging the simple roots $\beta_1$ and~$\beta_4$. Finally, we take $H$ to be the connected subgroup of~$G$ with Lie algebra~$\mathfrak h$.

\subsubsection{Case~\textup{\ref{sl_spin_2}}}

$I = \lbrace 2 \rbrace$. Direct computations using the above-described embedding of~$\mathfrak h$ into~$\mathfrak g$ show that the pair $(M', \mathfrak g / (\mathfrak p_I^- + \mathfrak h))$ is equivalent to $(\SL_2 \times \SL_2, \operatorname{S}^2 \FF^2 \oplus \FF^2)$, hence $\rk \Gamma_I(G,H) = 4$.

Applying Case~\ref{sl_so_even_i} of Table~\ref{table_sym}, we obtain \[
\left. R_G(\pi_2) \right|_K \simeq R_K(\pi_2) \ \text{ and } \ \left. R_G(2\pi_2) \right|_K \simeq R_K(2\pi_2) \oplus R_K(2\pi_1) \oplus R_K(0).
\]
Then applying Case~\ref{spin_even_q=1} of Table~\ref{table_sym} we obtain
\[
\left. R_G(\pi_2) \right|_H \simeq R_H(\pi_2) \oplus R_H(\pi_1),
\]
which yields all indecomposable elements of $\Gamma_I(G,H)$ of the form $(\pi_2; *)$. As $\mathfrak h$ is obtained from $\mathfrak h_1$ by the above-mentioned outer automorphism of~$\mathfrak k$, to compute $\left. R_G(2\pi_2) \right|_H$ we apply the modified version of Case~\ref{spin_even_q=1} of Table~\ref{table_sym} in which the fundamental weights~$\pi_1$ and~$\pi_4$ of~$K$ are interchanged. This yields
\[
\left. R_G(2\pi_2) \right|_H \simeq W_{2,2}(G,H) \oplus R_H(2\pi_3) \oplus R_H(0),
\]
whence all indecomposable elements of $\Gamma_I(G,H)$ of the form $(2\pi_2; *)$.


\end{document}